\newtheorem{theorem}{Theorem}[section]
\newtheorem{lemma}[theorem]{Lemma}
\newtheorem{proposition}[theorem]{Proposition}
\newtheorem*{theorem*}{Theorem A}
\newtheorem*{theorem'}{Theorem B}
\newtheorem*{theorem"}{Theorem C}
\newtheorem{corollary}[theorem]{Corollary}
\newtheorem{predefinition}[theorem]{Definition}
\newenvironment{definition}{\begin{predefinition}\rm}{\end{predefinition}}
\newtheorem{preremark}[theorem]{Remark}
\newenvironment{remark}{\begin{preremark}\rm}{\end{preremark}}
\newtheorem{prenotation}[theorem]{Notation}
\newenvironment{notation}{\begin{prenotation}\rm}{\end{prenotation}}
\newtheorem{preexample}[theorem]{Example}
\newenvironment{example}{\begin{preexample}\rm}{\end{preexample}}
\newtheorem{preclaim}[theorem]{Claim}
\newtheorem{prequestion}[theorem]{Question}
\newtheorem{preapplication}[theorem]{Application}
\numberwithin{equation}{section}
\newcommand \ZZ {{\mathbb Z}}
\newcommand \QQ {{\mathbb Q}}
\newcommand \EE {{\mathbb E}}
\newcommand \PP {{\mathbb P}^1}
\newcommand \CC {{\mathbb C}}
\newcommand \RR {{\mathbb R}}
\newcommand \cf {{\mathfrak f}}
\newcommand \bP {{\mathbb P}}
\newcommand \cl {{\rm cl}}
\global\let\ker\undefined
\DeclareMathOperator{\ker}{Ker}
\newcommand \Sh {{\rm Sh}}
\newcommand{\cU}{\mathcal{U}}
\title{Data for Shimura varieties intersecting the Torelli locus}
\date{}
\author{Wanlin Li}
\address{Centre de recherches math\'ematiques,
	Universit\'e de Montr\'eal, 2920 Chemin de la tour,
	Montr\'eal (Qu\'ebec) H3T 1J4, Canada}
\email{liwanlin@crm.umontreal.ca}
\author{Elena Mantovan}
\address{Department of Mathematics, California Institute of Technology, Pasadena, CA 91125, USA}
\email{mantovan@caltech.edu}
\author{Rachel Pries}
\address{Department of Mathematics, 
Colorado State University, 
Fort Collins, CO 80523, USA}
\email{pries@math.colostate.edu}
\begin{document}

\thanks{
We would like to thank the American Institute of Mathematics for their support through the Square program. 
Li was partially supported by the Simons Collaboration on Arithmetic Geometry, Number Theory and Computation.
Pries was partially supported by NSF grant DMS-19-01819. 
}

\thanks{We would like to thank Yunqing Tang for her support and advice on this paper.
We also thank Eran Assaf, Bjorn Poonen, and John Voight for helpful conversations about class groups.
}

\begin{abstract}
For infinitely many Hurwitz spaces parametrizing cyclic covers of the projective line,
we provide a method to determine the integral PEL datum of the Shimura variety that contains the image 
of the Hurwitz space under the Torelli morphism.

Keywords: 
abelian variety, endomorphism, complex multiplication, principal polarization, cyclotomic field, class group, 
curve, cyclic cover, Jacobian, 
moduli space, Shimura variety, PEL type, lattice, Hermitian form.

MSC20 classifications: 
primary 11G15, 11G18, 11G30, 14G35, 14K10; 
secondary 11G10, 11R18, 14H10, 14H40, 14K22.
\end{abstract}












\maketitle

\section{Introduction}

\subsection{Overview}

In \cite{shimuraunitary}, Shimura studied unitary groups associated with Hermitian spaces over algebraic number fields and their maximal lattices.  In \cite{shimuraanalytic}, he developed this theory to study isomorphism classes of polarized abelian varieties and Riemann forms.
Using this, in \cite{shimuratranscend}, Shimura determined the lattice and Hermitian matrix
associated with each of six families of cyclic covers of the projective line ${\mathbb P}^1$.
The lattice and Hermitian matrix determine \emph{the integral PEL datum} of the family, 
as defined in Sections~\ref{Ssig} and \ref{Sintegraldatum}.

An important question to ask is whether Shimura's method applies to other 
families of curves or to their associated Shimura varieties.
As stated in \cite[Section 6]{shimuratranscend}, 
it is essential to have a point in the Shimura variety that represents the Jacobian of a curve, in 
order to have a self-dual lattice.  It is natural to start by studying PEL type Shimura varieties that 
are associated with families of Jacobians of degree $m$ cyclic covers of ${\mathbb P}^1$.
In this context, it is easier to identify the lattice when the class number of $\QQ(\zeta_m)$ is one.
One of Shimura's key results is that, under mild conditions, there is a unique isomorphism class
of Hermitian form for a given lattice and given signature type at the infinite places, 
\cite[Appendix, Proposition 8]{shimuratranscend}. 

In this paper, for $m$ an odd prime such that $\QQ(\zeta_m)$ has class number one,
we provide a method to determine the lattice and Hermitian matrix, 
and thus the integral PEL datum, for all positive-dimensional families of degree $m$ 
cyclic covers of ${\mathbb P}^1$, see Theorem~\ref{PELdatumbeta}. 
Our method also works for infinitely many families when $m$ is not prime.

\subsection{Shimura Data} \label{Ssig}

Our results generalize what is known about 
Shimura data of the moduli spaces of principally polarized abelian varieties to 
the context of certain unitary Shimura varieties.
In that light, the foundation of this paper is Riemann's theorem, which 
states that a complex torus is an abelian variety if and only if it is polarizable.
Thus the category of abelian varieties over $\CC$ is equivalent to the category of 
pairs $(V,\Lambda)$, where $V$ is a non-trivial $\CC$-vector space of finite dimension and 
$\Lambda$ is a lattice in $V$ that admits a Riemann form which is integral on $\Lambda$, see Section~\ref{SRiemann}.

For $g\geq 1$, let ${\mathcal A}_g$ denote the moduli space 
of principally polarized abelian varieties of dimension $g$.
Let $\Lambda = \ZZ^{2g}$ be the standard lattice in $V=\QQ^{2g}$, 
together with the standard symplectic form $\Psi: V \times V \to \QQ$, which is integral on $\Lambda$.
The points of the Siegel upper half space of dimension $g$ are complex structures $J$ on $V_\RR$ 
such that $\Psi_J=\Psi_\RR(\cdot, J\cdot)$ is a Riemann form. 
These points parametrize
principally polarized complex abelian varieties $A$ of dimension $g$, 
equipped with a trivialization $\Lambda\simeq H^1(A,\ZZ)$.
By \cite[Example~11.12]{milneShimura}, the
Shimura datum for ${\mathcal A}_g$
is given by the 
symplectic $\QQ$-vector space $(V,\Psi)$. 

We review Deligne's formulation of a Shimura datum in Definition~\ref{Ddefshimdata}.
Let $G$ denote the reductive algebraic group $G={\rm GSp}(V, \Psi)$. 
In this formulation, the Shimura datum for ${\mathcal A}_g$ is given 
by the pair $(G,{X})$, where $X$ 
is the $G(\RR)$-conjugacy class of homomorphisms 
$h: {\mathbb S} \to G_{{\mathbb R}}$, where ${\mathbb S} = {\rm Res}_{\CC/{\mathbb R}} {\mathbb G}_m$ is the Deligne torus.   

In this paper, we generalize the result about the Shimura datum of ${\mathcal A}_g$, 
by replacing ${\mathcal A}_g$ by 
certain unitary Shimura varieties which we describe in the next section.

\subsection{Results}

Consider a family of cyclic covers of the projective line ${\mathbb P}^1$, indexed by the monodromy datum 
$\gamma=(m,N,a)$, where $m$ is the degree, $N$ is the number of branch points, and $a$ is the 
inertia type, see Section~\ref{Sfamily}.
Let $Z_\gamma$ be the image of this family in ${\mathcal A}_g$ under the Torelli morphism.
In \cite{deligne-mostow}, Deligne and Mostow describe  the smallest PEL type Shimura subvariety $S_\gamma$ of ${\mathcal A}_g$ containing $Z_\gamma$.
Our goal is to compute the Shimura datum of $S_\gamma$ or, more precisely,
the \emph{integral PEL datum} of $S_\gamma$ as defined in Definition~\ref{DintPELd}.

We say that a point $P$ of $Z_\gamma$ is a \emph{distinguished point} 
(see Definition~\ref{Ddistinguished}) 
if the Jacobian $J_P$ of the curve $C_P$ represented by $P$ is a product of principally polarized abelian varieties each having complex multiplication. 
For every monodromy datum $\gamma$ such that $m$ is an odd prime, 
we prove in Proposition~\ref{Pdegenerate} that 
$Z_\gamma$ has a distinguished point $P$ such that all the abelian varieties in $J_P$
have complex multiplication by $\ZZ[\zeta_m]$.  
Here $J_P$ is the Jacobian of a curve in the boundary of the Hurwitz family; 
this curve is 
an admissible cyclic cover of a tree of projective lines, which can be explicitly computed from 
$\gamma$.

This provides a strategy to achieve the goal above. 
Suppose $m$ is an odd prime such that $\QQ(\zeta_m)$ has class number $1$. 
In other words, $m=3,5,7,11,13,17,19$.
In Theorem~\ref{PELdatumbeta}, for every monodromy datum $\gamma$ for degree $m$ covers, 
we provide a method that determines the integral PEL datum of $S_\gamma$; specifically, this method
analyzes $J_P$ to 
\begin{enumerate}
\item express $V$ as a vector space over $F=\QQ(\zeta_m)$;
find a ${\mathcal O}_F$-lattice $\Lambda \subset V$; and

\item explicitly find the Hermitian form $\langle \cdot , \cdot \rangle$ on $V$, taking integral values on $\Lambda$. 
\end{enumerate}

Our technique to find the Shimura datum of $S_\gamma$ also applies to infinitely many monodromy data 
$\gamma$ when $m$ is a composite number such that $\QQ(\zeta_m)$ has class number $1$.

\subsection{Comparison of methods}

Our approach to this problem is somewhat different from Shimura's  and may be more accessible to people 
with background in the areas of algebraic number theory and moduli spaces of curves (Hurwitz spaces).

One advantage of our approach is that it provides a straight-forward method to 
determine the lattice and the Hermitian form explicitly.  We provide many examples of this in Section~\ref{Sexample57} (for $m$ prime) and Section~\ref{Scompositem} (for $m$ composite).  
The reason is that the decomposition of the Jacobian provides a basis for the lattice as a free 
$\ZZ[\zeta_m]$-module and the Hermitian form is diagonal with respect to that basis. 
With Shimura's approach, it is necessary to find a Witt decomposition 
of the signature and lattice; this is straight-forward for Shimura's six examples, five of which
are of the form $y^m=f(x)$ for $m$ an odd prime and $f(x)$ a separable polynomial, but could cause
subtleties for more complicated families.

One drawback to our approach is that it only works 
when the CM-types of the abelian varieties 
produced in the method are simple.  
The simple property guarantees that the principal polarization on each of the abelian varieties 
is unique up to isomorphism;
this guarantees that the Hermitian form we compute is correct.  
For $m=3,5,11,17$, we show that this simple condition is automatic.
For $m=7$, we compute a 
new distinguished point to avoid the cases where a non-simple CM-type is produced, 
see Section~\ref{Sanother7}.
For $m=13,19$, we rely on Shimura's result about uniqueness of Hermitian forms.

For composite $m$, there are several complications with both methods.

In this paper, we also pay careful attention to the subtleties caused by the choice of primitive $m$th root of unity.  
See Remark~\ref{RcompareS}.

\subsection{Outline}

Section~\ref{Shurwitz} contains information about families of cyclic covers of ${\mathbb P}^1$ and information about Shimura varieties and Shimura data.  

Section~\ref{Sbackground} contains some results in algebraic number theory about the narrow class group and units of independent signs that we need for this paper and later papers.  
In Sections~\ref{Scyclo} - \ref{Ssimple}, we restrict to the case of cyclotomic fields and provide information about 
the narrow class number, the different, and simple types.

In Section~\ref{SabvarCM}, we prove a result about uniqueness of principal polarizations on abelian varieties with complex multiplication, see Propositions~\ref{CMuniqueF} and \ref{CMuniqueF2}.

Section~\ref{Sshdata} contains the main result of the paper, Theorem~\ref{PELdatumbeta}, 
which determines the integral PEL data of the unitary Shimura varieties $S_\gamma$ 
for all families $S_\gamma$ when $m$ is an odd prime such that $\QQ(\zeta_m)$ has class number $1$.

Section~\ref{Sexample57} contains examples of Theorem~\ref{PELdatumbeta}:
for all $\gamma$ when $m=3$; and for all $\gamma$ with $N=4$ branch points when $m=5$ and $m=7$. 
In Section~\ref{Sanother7}, we apply the technique of Theorem~\ref{PELdatumbeta}
to determine the integral PEL datum 
using a different kind of distinguished point that represents the Jacobian of a curve with extra automorphisms.

In Section~\ref{Scompositem}, we provide some examples when $m=4,6,10$ to 
illustrate that the technique of Theorem~\ref{PELdatumbeta}
sometimes works when $m$ is composite.

\begin{remark}
In \cite{moonen}, Moonen proved that there 
are exactly 20 special families of cyclic covers of ${\mathbb P}^1$, up to equivalence.  
The family with monodromy datum $\gamma$ is \emph{special} if 
$Z_\gamma$ is open and dense in $S_\gamma$. 
The six families in Shimura's 
paper are $M[6]$, $M[10]$, $M[8]$, $M[11]$, $M[16]$, and $M[17]$, 
where $M[n]$ denotes the $n$th row in \cite[Table 1]{moonen}.
As an application of Theorem~\ref{PELdatumbeta}, we determine the integral PEL datum for 
twelve of Moonen's special families
including the six families from \cite{shimuratranscend}. 
These provide nice examples but we emphasize that the special property 
is not necessary for either method.
\end{remark}

\section{Hurwitz families and Shimura varieties} \label{Shurwitz}

\subsection{Families of cyclic covers of the projective line} \label{Sfamily}

As in \cite[Section 2.2]{LMPT2}, we consider the universal family of $\mu_m$-covers $\psi:{\mathcal C} \to \PP$, branched at $N$ points, 
with \emph{inertia type} $a=(a(1), \ldots, a(N))$;
the data $\gamma=(m,N,a)$ is called a \emph{monodromy datum}. 
Over $\mathbb{Q}(\zeta_m)$, such a cover $\psi$ has an equation of the form
$y^m = \prod_{i=1}^N (x-t(i))^{a(i)}$, together with a choice of automorphism of order $m$ given by
$(x,y) \mapsto (x, \zeta_my)$.  
By \cite[Equation 2.2]{LMPT2}, the genus of the fibers of ${\mathcal C}$ is 
\begin{equation}\label{Egenus}
g=g_\gamma = 1 + \frac{(N-2)m - \sum_{i=1}^N {\rm gcd}(a(i), m)}{2}.
\end{equation}

The \emph{signature type} of the monodromy datum $\gamma$ is 
a function $\cf: {\rm Hom}(\QQ[\mu_m],\CC)\to \ZZ_{\geq 0}$ which we denote by
$\cf=(\cf(\tau_1), \ldots, \cf(\tau_{m-1}))$, where $\tau_n$ and $\cf(\tau_n)$ are defined as follows.
For $0 \leq n < m$, let $\tau_n:\QQ[\mu_m]\to\CC$ be given by $\tau_n(\zeta_m)=e^{2\pi I n /m}$. 
We identify
\[\ZZ/m\ZZ   = {\rm Hom}(\QQ[\mu_m],\CC) \text{  by   }
n\mapsto \tau_n.\] 
We define $\cf(\tau_n)$ to be the dimension of the eigenspace of $H^0({\mathcal C}({\mathbb C}), \Omega^1)$
on which the chosen automorphism of order $m$ acts by multiplication by $\zeta_m^n$.

There is a formula for $\cf(\tau_n)$ as follows.
For any $x\in \QQ$, let $\langle x\rangle$ denote the fractional part of $x$. 
By \cite[Lemma 2.7, \S3.2]{moonen} (or \cite{deligne-mostow}), 
\begin{eqnarray}\label{DMeqn}\label{Esign}
\cf(\tau_n) = \begin{cases} -1+\sum_{i=1}^N\langle\frac{-na(i)}{m}\rangle & \text{ if $n\not\equiv 0 \bmod m$}\\
0 & \text{ if $n\equiv 0 \bmod m$}.\end{cases}
\end{eqnarray}

Next, we describe how the inertia type $a$ and signature $\cf$
change under the action of ${\rm Aut}(\mu_m) \simeq (\ZZ/m\ZZ)^*$.
Let $\sigma_i \in {\rm Aut}(\mu_m)$ 
denote the automorphism such that $\sigma_i(\zeta_m)=\zeta_m^i$.

\begin{lemma} \label{Lchangeembed}
The automorphism $\sigma_i\in{\rm Aut}(\mu_m)$ 
takes the inertia type $a=(a_1, a_2,\dots, a_N)$ to $a'$
and the signature $\cf$ to $\cf'$ as follows: 
\[a'=i^{-1} \cdot a=(i^{-1} \cdot a_1,i^{-1} \cdot a_2, \dots, i^{-1} \cdot a_N) \text{ and }
\cf' (\tau_n) 
 =\cf(\tau_{ni^{-1}}).\]
 \end{lemma}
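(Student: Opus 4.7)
The plan is to exploit the fact that the cover $\psi:\mathcal{C}\to\PP$ and its full group of deck transformations are intrinsic objects; only the identification $\mu_m\simeq\mathrm{Aut}(\psi)$, given by sending $\zeta_m$ to the chosen automorphism $\alpha:(x,y)\mapsto(x,\zeta_m y)$, depends on the choice of primitive root. Applying $\sigma_i$ replaces $\zeta_m$ by $\zeta_m^i$ in the definition of the chosen automorphism, so the new distinguished automorphism $\alpha'$ acts on $y$ by $\zeta_m^i$; equivalently, $\alpha'=\alpha^i$ inside $\mathrm{Aut}(\psi)$. Both halves of the lemma will follow from this single identity.

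For the inertia type, I would use that the local monodromy at $t(j)$ is an intrinsic generator of the inertia group at that branch point, determined by an orientation of a small loop around $t(j)$. Going around $t(j)$ multiplies $y$ by $\zeta_m^{a(j)}$, so this generator equals $\alpha^{a(j)}$; it also equals $(\alpha')^{a'(j)}$ by definition of $a'(j)$. Since $\alpha'=\alpha^i$, this forces $i\cdot a'(j)\equiv a(j)\pmod m$, and hence $a'(j)\equiv i^{-1}a(j)\pmod m$, as claimed.

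For the signature, the Hodge eigenspace decomposition
\[
H^0(\mathcal{C}(\mathbb{C}),\Omega^1)=\bigoplus_{n\in\ZZ/m\ZZ}V_n,\qquad V_n=\{\omega:\alpha\cdot\omega=\tau_n(\zeta_m)\omega\},
\]
is intrinsic to the pair $(\mathcal{C},\langle\alpha\rangle)$. On $V_n$ the new automorphism $\alpha'=\alpha^i$ acts by $\tau_n(\zeta_m)^i = e^{2\pi I ni/m} = \tau_{ni}(\zeta_m)$, so $V_n$ is the $\tau_{ni}$-eigenspace for $\alpha'$ under the new indexing. This gives $\cf'(\tau_{ni})=\cf(\tau_n)$, and replacing $n$ by $ni^{-1}$ yields $\cf'(\tau_n)=\cf(\tau_{ni^{-1}})$.

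There is no serious obstacle; the substance is careful bookkeeping around the identification $\mu_m\simeq\mathrm{Aut}(\psi)$ and the corresponding parametrization of embeddings and eigenvalues. As a consistency check, one can substitute $a'=i^{-1}\cdot a$ directly into the Deligne--Mostow formula (\ref{DMeqn}) and verify
\[
\cf'(\tau_n)=-1+\sum_{j=1}^N\left\langle\frac{-ni^{-1}a(j)}{m}\right\rangle=\cf(\tau_{ni^{-1}}),
\]
agreeing with the intrinsic computation above.
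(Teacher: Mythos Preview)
Your proof is correct and follows essentially the same approach as the paper: you express the canonical inertia generator in terms of the new distinguished automorphism $\alpha'=\alpha^i$ to obtain $a'=i^{-1}\cdot a$, and then reindex the eigenspaces of $H^0(\mathcal{C}(\mathbb{C}),\Omega^1)$ under $\alpha'$ to obtain the signature formula, with the Deligne--Mostow check as a redundancy. The paper gives exactly these two arguments (the eigenspace argument and the substitution into~(\ref{DMeqn})), only more tersely.
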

 
 \begin{proof}
The $j$th entry $a_j$ of the inertia type signifies that the canonical generator of inertia above the $j$th branch point is the $a_j$th power of the generator $\zeta_m$ of $\mu_m$.  The canonical 
generator of inertia is the $i^{-1} a_j$th power of the new generator $\zeta_m^i$ of $\mu_m$, so the $j$th entry of the inertia type $a'$ is $i^{-1}a_j$. 

The formula $\cf'(\tau_n) = \cf(\tau_{ni^{-1}})$ can be deduced from the formula for $a'$ and \eqref{DMeqn}.
Alternatively, it can be seen directly since the automorphism $\sigma_i$ permutes the eigenspaces for the action of the chosen automorphism on $H^0({\mathcal C}({\mathbb C}), \Omega^1)$, taking the eigenspace indexed by $n$ to the one indexed by $ni^{-1}$.
 \end{proof}
 
Let $\gamma=(m, N,a)$ be a monodromy datum.
Consider the Hurwitz family of $\mu_m$-covers of ${\mathbb P}^1$ with monodromy datum $\gamma$.
As in \cite[Definition 2.1]{LMPT2}, let $Z^0=Z^0_\gamma$ denote the image of this family in ${\mathcal A}_g$.
Let $Z=Z_\gamma$ denote the closure of $Z^0_\gamma$ in ${\mathcal A}_g$.
A point $P$ in $Z^0_\gamma$ represents the Jacobian of a smooth curve $C_P$, for which there is a 
$\mu_m$-cover $\psi_P: C_P \to {\mathbb P}^1$ with monodromy datum $\gamma$.
A point $P$ in $Z_\gamma - Z^0_\gamma$ represents the Jacobian of a singular curve $C_P$ of compact type, 
for which there is an admissible $\mu_m$-cover $\psi_P: C_P \to T$ with monodromy datum $\gamma$, where $T$ is a tree of projective lines.
 
\subsection{Complex abelian varieties} \label{SRiemann}
 
Let $g \geq 1$ and let $V = \RR^{2g}$. 
Suppose $V$ has a complex structure 
and $\Lambda \subset V$ is a lattice of rank $2g$. 
A Riemann form on a pair $(V,\Lambda)$ is an alternating pairing
$\Psi: \Lambda\times\Lambda \to \ZZ$
such that the pairing $\Psi_\RR(\cdot, \sqrt{-1} \cdot ):V\times V\to\RR$
is symmetric and positive definite.  
Every complex abelian variety of dimension $g$ is isomorphic to $V/\Lambda$ for some 
pair $(V, \Lambda)$ that admits a Riemann form.

By Riemann's Theorem, 
the category of 
polarized abelian varieties of dimension $g$ over $\CC$ is equivalent to the category of 
pairs $(V,\Lambda)$, where $V$ is a non-trivial $\CC$-vector space of dimension $g$ and 
$\Lambda$ is a lattice in $V$ together with a Riemann form.

\begin{remark}\label{Rdeligne}
Let ${\mathbb S} = {\rm Res}_{\CC/{\mathbb R}} {\mathbb G}_m$ be the Deligne torus. 
A Hodge structure on $V$ is a homomorphism $h:  {\mathbb S} \to GL_V$. 
For Hodge structures of type $\{(-1,0), (0,-1)\}$,
the $\RR$-linear map $h(i)$, called the Weil operator, defines a complex structure $J$ on $V$.
By the work of Deligne, a complex structure on $V$ gives equivalent data as 
a Hodge structure on $V$ of type $\{(-1,0), (0,-1)\}$.
\end{remark}
 
\subsection{Definition of Shimura data} \label{SdefShim}

Recall that ${\mathbb S} = {\rm Res}_{\CC/{\mathbb R}} {\mathbb G}_m$ is the Deligne torus. 

\begin{definition} \label{Ddefshimdata} \cite[Definition 11.11]{milneShimura}.
A \emph{Shimura datum} is a pair $(G,X)$ consisting of a reductive algebraic group
$G$ over $\QQ$ and a $G(\RR)$-conjugacy class of homomorphisms ${\mathbb S} \to G_\RR$ satisfying
certain conditions \cite[2.1.1(1-3)]{deligne79}.
\end{definition}
Let ${\mathbb A}$ (resp ${\mathbb A}_f$) denote the ring of adeles (resp. finite adeles) of $\QQ$.
To any Shimura datum $(G,X)$, there is an associated compatible system of smooth complex manifolds
$G(\QQ)\backslash (X\times G({\mathbb A}_f)/K) $, 
for $K$ any (sufficiently small) open compact subgroup of $G({\mathbb A}_f)$. 
These are the  {\em Shimura varieties} associated with the datum $(G,X)$.

Let $(G, X)$ is a Shimura datum and $H \subset G$ be an algebraic subgroup over $\QQ$.
Define 
\begin{equation}
Y_H=\{ h\in X \mid h: {\mathbb S} \to H_\RR\subseteq G_\RR\}.
\end{equation}
The group $H(\RR)$ acts by conjugation on the set $Y_H$
and $Y_H$ decomposes as a disjoint union of finitely many $H(\RR)$-orbits $Y\subseteq Y_H$.
We say that $(H, Y)$ is a \emph{sub Shimura datum} of $(G, X)$. 

If $(H,Y)$ is a sub Shimura datum of a Shimura datum $(G,X)$, then
the Shimura varieties associated with  $(H,Y)$ are naturally  subvarieties of those associated with $(G,X)$.

\begin{example} For $g\geq 1$, let $V=\QQ^{2g}$ and consider the lattice $\Lambda = \ZZ^{2g}$.  
Let $\Psi$ be the standard symplectic form on $V$.
Let $G_g={\rm GSp}(V, \Psi)$ be the algebraic group of symplectic similitudes over $\QQ$.
Let $X_g$ be the $G_g(\RR) $-conjugacy class of 
of homomorphisms 
$h: {\mathbb S} \to G_{g\, {\mathbb R}}$. Then $(G_g,X_g)$ is a Shimura datum.

Furthermore, a homomorphism
$h: {\mathbb S} \to  GL_{V_\RR}$ which factors through $G_{g\, \RR}\subset GL_{V_\RR}$ corresponds to 
a complex structure $J$ on $V_\RR$ such that the pairing $\psi_J(\cdot, \cdot) =\psi(\cdot, J\cdot)$ is a Riemann form.
Thus,  $X_g$ is identified with the
space of all complex structures $J$ on 
$V_\RR$ such that $(x, y) \mapsto \psi(x, Jy)$ is positive definite, see \cite[Example~11.12]{milneShimura}.
It follows that $(G_g, X_g)$ is the Shimura datum 
for ${\mathcal A}_g$, the moduli space 
of principally polarized abelian varieties of dimension $g$. That is,  there is an identification between ${\mathcal A}_g(\CC)$ and $G_g(\QQ)\backslash (X_g\times G_g({\mathbb A}_f)/K_0)$, 
for $K_0$ the stabilizer of 
$\Lambda$ in $G_g({\mathbb A}_f)$, i.e., $K_0={\rm GSp}(\Lambda, \Psi)(\hat{\ZZ})$ for $\hat{\ZZ}\subset {\mathbb A}_f$ the profinite completion of $\ZZ$.
\end{example}

\begin{definition}
A Shimura datum is of \emph{Hodge type} if it is a sub Shimura datum of $(G_g,X_g)$, for some $g \geq 1$.
\end{definition}

Shimura varieties of Hodge type are realized as subvarieties of ${\mathcal A}_g$.

\subsection{Shimura data of PEL type}\label{SdefPEL}

We consider a subclass of Shimura data of Hodge type.

As in \cite[Section 4]{kottwitz92},
a {\em Shimura datum of PEL type} is a Shimura datum arising from a {\em PEL datum} $(B,*,V,\langle, \rangle, h)$ defined as follows.
Let $B$ be a semisimple finite dimensional $\QQ$-algebra, with a positive involution $*$.
Let $V$ be a finitely generated $B$-module, together with a non-degenerate skew-symmetric $*$-Hermitian form 
$\langle\cdot,\cdot\rangle$ on $V$.  
Let $h:\CC\to {\rm End}_{B\otimes \RR}(V_\RR)$
be an $\RR$-algebra homomorphism satisfying $\langle h(z) v , w\rangle=\langle v,h(\bar{z})w\rangle$, for all $v,w\in V_\RR$ and $z\in\CC$. Assume that the symmetric bilinear form
 $\langle\cdot, h(i)\cdot\rangle$ on $V$ is positive definite. 
 
Consider the algebraic group $H$ over $\QQ$ whose points over a $\QQ$-algebra $R$ are given by 
\begin{equation} \label{EdefG}
H(R)=\{x\in {\rm End}_{B\otimes R}(V_R) \mid xx^*\in R^\times\}.
\end{equation}
Then the restriction of $h$ to $\CC^\times$ defines a homomorphism of real algebraic groups ${\mathbb S} \to H_\RR$.
Define $Y$ to be the $H(\RR)$-conjugacy class of $h$.
The pair $(H, Y)$ is a Shimura datum;
it is the Shimura datum associated to the PEL datum $(B,*,V,\langle, \rangle, h)$.

By definition, $H=GL_B(V)\cap {\rm GSp}(V,\langle, \rangle)$. Thus $(H,Y)$ can be realized as a sub Shimura datum of $(G_g,X_g)$ for $g=\dim_\QQ(V)$; hence $(H,Y)$ is a Shimura datum of Hodge type.  

To any PEL datum $(B,*,V,\langle, \rangle, h)$,
there is an associated PEL moduli space ${\rm Sh}(H, Y)={\rm Sh}(B,*,V,\langle, \rangle, h)$, which is a subvariety of ${\mathcal A}_g$, see \cite[Section 5]{kottwitz92}.
It is defined over the reflex field of $(H,Y)$ which is a finite extension of $\QQ$.
By construction, ${\rm Sh}(H, Y)$ only depends on $(H_{\mathbb A}, Y)$. 
When the Hasse principle fails, ${\rm {Sh}}(H,Y)({\mathbb C})$ is the disjoint union of finitely many Shimura varieties, associated with all $\QQ$-forms of $H_{\mathbb A}$, see \cite[Sections~7-8]{kottwitz92}.

\subsection{Integral PEL data} \label{Sintegraldatum}

Let $(B,*,V,\langle, \rangle, h)$ be a PEL datum, and $(H,Y)$ the associated Shimura datum.

\begin{definition} \label{DintPELd}
An {\em integral PEL datum} of the PEL datum $(B,*,V,\langle, \rangle, h)$
consists of a tuple $({\mathcal O}_B,*,\Lambda, \langle, \rangle, h)$ where: 
$\mathcal{O}_B$ is an order of $B$ that is $*$-stable; and $\Lambda$ is a lattice of $V$ 
that is $\mathcal{O}_B$-stable; such that the $*$-Hermitian 
form $\langle \cdot, \cdot\rangle$ is integral on $\Lambda$.
\end{definition}

Sometimes an {\em integral PEL datum} is called an {\em integral model} of $(B,*,V,\langle, \rangle, h)$.

A rational prime $p$ is  {\em good} for an integral PEL datum if $\mathcal{O}_B$ is maximal at $p$ and $\Lambda$ is self-dual at $p$.  All but finitely many primes are good, \cite[Definition 1.4.1.1]{lan}. 
If $p$ is a good prime for an integral model of $(B,*,V,\langle, \rangle, h)$, then the PEL type modular variety ${\rm Sh}(H,Y)$ has good reduction at $p$. 

To an integral PEL datum $({\mathcal O}_B,*,\Lambda, \langle, \rangle, h)$, there is an associated (canonical)  integral model ${\rm Sh}({\mathcal O}_B,*,\Lambda, \langle, \rangle, h)$ of ${\rm Sh}(H,Y)$, defined over the ring of integers of the reflex field of $(H,Y)$ localized at good primes, \cite[Theorem  1.4.1.11]{lan}. 
If $p$ is a good prime for the integral PEL datum, then ${\rm Sh}({\mathcal O}_B,*,\Lambda, \langle, \rangle, h)$ has good reduction at $p$. 
\footnote{More precisely, for any level $K$ which is neat and hyperspecial at $p$, the scheme has good reduction at $p$.}

\subsection{Shimura varieties of Deligne and Mostow} \label{Sshimura}

Let $\gamma=(m,N,a)$ be a monodromy datum, and let $Z=Z_\gamma$ be as in Section \ref{Sfamily}.  In \cite{deligne-mostow}, Deligne and Mostow observe that there is a smallest subvariety of PEL type $S_\gamma$ containing $Z_\gamma$. 
The Shimura datum $(H,Y)=(H_\gamma,Y_\gamma)$ of $S_\gamma$ is associated with a PEL datum $(B,*,V,\langle, \rangle, h)$ as in Section \ref{SdefPEL}, satisfying the following conditions:
\begin{itemize}
\item $B=\QQ[\mu_m]$ is the group algebra of $\mu_m$ over $\QQ$;
\item $*$ is the involution on $\QQ[\mu_m]$ induced by the inverse map on $\mu_m$; 
\item $\dim_\QQ (V)={2g_\gamma}$.
\end{itemize}
Our goal is to compute the integral PEL datum of $S_\gamma$.

In \cite{deligne-mostow}, Deligne and Mostow compute the signature type $\cf=\cf_\gamma$ 
associated with the monodromy datum $\gamma$, see \eqref{DMeqn}. 
A signature type $\cf:(\ZZ/m \ZZ)^*\to [0,g]\cap \ZZ$ uniquely determines up to isomorphism: the structure of $V$ as $B$-module; {the complex structure on $V_\RR$}; 
and the group $H_\RR$. 

Hence, our goal is to explicitly compute the skew Hermitian form 
$\langle\cdot ,\cdot\rangle$ on $V$. 
More precisely, 
let $\mathcal{O}_B=\ZZ[\mu_m]$, which is a $*$-stable order of $B$.
We compute a  $\mathcal{O}_B$-stable lattice $\Lambda$ of $V$ and a skew Hermitian form 
$\langle\cdot ,\cdot\rangle$ on $V$ which is integral on $\Lambda$. Furthermore, for $m$ prime, the lattice $\Lambda$ is self-dual away from $m$.
That is, we compute an integral PEL datum $({\mathcal O}_B,*,\Lambda, \langle, \rangle, h)$ for $S_\gamma$, for which all primes except $m$ are good.


\subsection{Strategy}

If $Z$ is an irreducible algebraic subvariety of ${\mathcal A}_g$, 
then there is a unique smallest subvariety of Hodge type $S$ containing $Z$.  
The subgroup $H\subset G$ in the Shimura datum of $S$ is not uniquely determined, 
but it can be taken to be the generic Mumford--Tate group on $Z$;
in particular, it can be computed at a point of $Z$,
see \cite[Sections 2.8 and 2.9, also 3.2]{moonenLinearity}.
For a monodromy datum $\gamma=(m,N,a)$, 
it thus suffices to compute the Shimura datum at a distinguished point $P$ of $Z_\gamma$  (Definition \ref{Ddistinguished});
when $m$ is an odd prime, we prove that such a point $P$ exists in Proposition \ref{Pdegenerate}.
Write the first Betti cohomology of the curve $C_P$ as $V=H^1(C_P({\mathbb C}), {\mathbb Q})$.

Assume $m$ is prime. We write $F=\QQ(\zeta_m)$ for the $m$-th cyclotomic field, and ${\mathcal O}_F=\ZZ[\zeta_m]$ for its ring of integers. From the signature type $\cf=\cf_\gamma$, we deduce that 
$V\simeq F^r$ as $\QQ[\mu_m]$-modules, where the action of $\QQ[\mu_m]$ on $F^r$ is induced by the natural homomorphism $\QQ[\mu_m]\to F=\QQ(\zeta_m)$,
and $r=2g/(m-1)$, for $g=g_\gamma$ as in (\ref{Egenus}).  Also,  
$r= \cf(\tau)+\cf(\tau^*)$ for any $\tau: F\to \CC$ by (\ref{Esign});   
the complex structure on $V_\RR$ is given as $V_\RR\simeq (F\otimes_\QQ \RR)^r\simeq \oplus_\tau \CC^{\cf(\tau)}$.
The standard lattice $\Lambda\simeq{\mathcal O}_F^r$ defines a lattice $\Lambda\subset V$.  

In Theorem \ref{PELdatumbeta}, we construct an explicit Hermitian form $\psi$ on $V$, with signature $\cf=\cf_\gamma$, which is integral on $\Lambda$.  
When $F$ has class number $1$, for $m=3,5,7,11,17$, 
we give a new proof that there is a unique possibility for the lattice and Hermitian form for $S_\gamma$, 
see Remark~\ref{RgoodM}. 
Thus we have computed the integral PEL datum $(\ZZ[\mu_m], *, \Lambda, \psi, h)$
defining the Shimura datum $(H_\gamma,Y_\gamma)$.  
of $S_\gamma$.

\subsection{Complex multiplication} \label{SfirstCM}

Let $L$ be a CM-field and let $L_0$ be the maximal totally real subfield of $L$. 
Let $n=[L_0:\QQ]$; hence, $[L:\QQ]=2n$.
A CM-type of $L$ is an ordered set $\Phi$ of distinct embeddings 
$\phi_i:L \hookrightarrow \CC$, for $1 \leq i \leq n$, no two of which are complex conjugate.
A CM-type of $L$ is called simple if it is not induced from the CM-type of a proper CM-subfield of $L$.

Let $A$ be a complex torus such that $L \subset {\rm End}(A) \otimes \QQ$.
We say that $A$ is of CM-type $(L, \Phi)$ if ${\rm dim}(A)=n$ and the complex representation of 
$L$ on ${\rm Lie}(A)$ is isomorphic to $\sum_{\phi \in \Phi} \phi$.
If, in addition, ${\rm End}(A) \simeq {\mathcal O}_L$, we say $A$ has type $({\mathcal O}_L, \Phi)$.

By \cite[Lemma 3.1]{LMPT1},  when $N=3$, the Jacobian of a $\mu_m$-cover $\psi:{\mathcal C} \to \PP$, branched at $3$ points, with inertia type $a=(a_1,a_2,a_3)$ and signature type $\cf$,
has complex multiplication by $\prod_{d}\QQ(\zeta_d)$, 
where $d$ satisfies $1<d \mid m$, and $d\nmid  a(i)$ for  $1\leq i\leq 3$. 
Its 
CM-type $\Phi$ satisfies
\begin{equation} \label{EdefCMtype}
\phi\in\Phi \text{ if and only if } \cf(\phi)>0.
\end{equation}
The maximal order in $\prod_{d}\QQ(\zeta_d)$ is $\prod_{d}\ZZ[\zeta_d]$.
The image of the group ring $\ZZ[\mu_m]$ in $\prod_{d}\QQ(\zeta_d)$ has finite index inside the maximal order. 
This index is 1 if and only if there is a unique integer $d$ satisfying $1<d \mid m$, and $d\nmid  a(i)$ for  $1\leq i\leq 3$ (for example, if $m$ is prime). 

The next result is immediate from Lemma~\ref{Lchangeembed} and \eqref{EdefCMtype}.

\begin{lemma} \label{Lchangeembed2}
If $N=3$, then $\sigma_i\in{\rm Aut}(\mu_m)$ takes the CM-type $\Phi$ to 
 $\Phi'=i \cdot \Phi =\{\tau_{ni} \mid \tau_n \in\Phi\}$.
\end{lemma}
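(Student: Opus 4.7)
The plan is to combine Lemma~\ref{Lchangeembed} with the characterization of the CM-type given in \eqref{EdefCMtype}. Since $N=3$, the Jacobian of the $\mu_m$-cover has complex multiplication with CM-type $\Phi$ determined by the signature via $\phi\in\Phi$ if and only if $\cf(\phi)>0$. The automorphism $\sigma_i$ acts by permuting embeddings, so I need only translate how this permutation transports the ``positive signature'' condition from $\cf$ to $\cf'$.

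First, I would invoke Lemma~\ref{Lchangeembed}, which tells us that under $\sigma_i$ the signature transforms as $\cf'(\tau_n) = \cf(\tau_{n i^{-1}})$. Applying the defining condition \eqref{EdefCMtype} to the new signature, the transported CM-type $\Phi'$ consists of those $\tau_n$ with $\cf'(\tau_n) > 0$, i.e., those $\tau_n$ with $\cf(\tau_{n i^{-1}}) > 0$. By \eqref{EdefCMtype} applied to $\cf$, this last condition is equivalent to $\tau_{n i^{-1}} \in \Phi$.

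Finally, I would reindex: writing $k = n i^{-1}$, so $n = k i$, the set $\Phi'$ equals $\{\tau_{ki} : \tau_k \in \Phi\}$, which is exactly $i\cdot\Phi$ as defined in the statement. Since the whole argument is a direct chain of equivalences, there is essentially no obstacle; the only thing to be careful about is keeping the direction of the index substitution consistent (i.e., $\sigma_i$ acting on embeddings versus $\sigma_i$ acting on the inertia type, which go by inverse exponents), but that bookkeeping is already done for us in Lemma~\ref{Lchangeembed}.
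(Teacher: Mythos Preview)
Your proposal is correct and follows essentially the same approach as the paper, which simply states that the result is immediate from Lemma~\ref{Lchangeembed} and \eqref{EdefCMtype}. You have spelled out the chain of equivalences that the paper leaves implicit, and your bookkeeping with the reindexing $k = ni^{-1}$ is exactly right.
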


For any CM-type $(L,\Phi)$, there exists a unique $0$-dimensional
PEL type moduli space $\Sh=\Sh(L,\Phi)$ whose points represent abelian varieties with complex multiplication of 
type $({\mathcal O}_L,\Phi)$.  (When $L=\QQ(\zeta_m)$, the points of $\Sh$ represent the Jacobians of 
curves for which there is a $\mu_m$-cover of ${\mathbb P}^1$ branched at $N=3$ points with CM-type $\Phi$.)
The signature type $\cf$ of $\Sh$ is given as: 
\begin{equation}\label{CMtype_sign} 
\text{ for each }\phi: L\to \CC: \cf(\phi)=1\text{ if }\phi\in\Phi \text{ and } \cf(\phi)=0\text{ if }\phi\notin\Phi.
\end{equation}
For any signature type $\cf$ on $L$ taking values in $\{0,1\}$, 
there is a unique CM-type $\Phi$ of $L$ compatible with $\cf$.

\section{Some background from algebraic number theory} \label{Sbackground}

\subsection{Class group and narrow class group} \label{Snarrowclass}

For a number field $L$: 
let $\mathcal{O}_L$ denote its ring of integers;
let $\mathcal{U}_L$ denote the units in $\mathcal{O}_L$;
and let $D_{L/\QQ}$ denote the different of $L$ over $\QQ$.

Let $\cl_L$ be the class group of $L$.  Recall that $\cl_L=I_L/P_L$, where 
$I_L$ is the group of non-zero fractional ideals
and $P_L$ is the subgroup of non-zero principal fractional ideals. 
Let $h_L =|\cl_L|$ be the class number of $L$.

An element $\alpha \in L$ is totally positive if it is positive under every real embedding of $L$.
Let $\mathcal{U}_L^+\subset \mathcal{U}_L$ be the subgroup of totally positive units.
Let $P_L^+ \subset P_L$ be the subgroup of principal ideals generated by a totally positive element.
The narrow class group of $L$ is $\cl_L^+=I_L/P^+_L$ and the narrow class number is $h_L^+=|\cl_L^+|$.
There is a surjection \[\nu_L: \cl_L^+ \to \cl_L.\]

\subsection{Units of independent signs} \label{Sindepsigns}

Let $L$ be a CM-field and $L_0$ its maximal totally real subfield. 
Let $n=[L_0:\QQ]$; hence, $[L:\QQ]=2n$.
By \cite[Theorem 4.10]{washington}, $h_{L_0}$ divides $h_L$.

We fix an ordering $\tau_1, \ldots, \tau_n$ of the $n$ real embeddings $L_0 \hookrightarrow \RR$.  
If $L_0/\QQ$ is Galois, we identify these with the elements of ${\rm Gal}(L_0/\QQ)$.
Consider the group homomorphism
\begin{equation} \label{Edefrho}
\rho_{L_0}:  \mathcal{U}_{L_0} \to \{\pm 1\}^{n}, \ \rho_{L_0}(u)= (\tau_i(u)/|\tau_i(u)|)_{1 \leq i \leq n} \ \mbox{ for } 
u \in \mathcal{U}_{L_0}.
\end{equation}

We say that $L_0$ has {\it units of independent signs} if, for each real embedding $\tau$, 
there is a unit which is negative under $\tau$ but positive under all other real embeddings, see \cite[Definition~12.1]{bookCH}.
This is equivalent to saying that $\rho_{L_0}$ is surjective or that $\nu_{L_0}: \cl^+_{L_0} \to \cl_{L_0}$ is an isomorphism, see \cite[Lemma~11.2]{bookCH}.

We say that $L_0$ has {\it units of almost independent signs} if every unit in ${\mathcal U}_{L_0}$ is negative under an even number of real embeddings and, for every pair of real embeddings, there is a unit which is negative under exactly the two embeddings in that pair, see \cite[Definition 12.13]{bookCH}.
This condition is equivalent to $|{\rm ker}(\nu_{L_0})|=2$ or  $|{\rm coker}(\rho_{L_0})|=2$, see \cite[Lemma 11.2]{bookCH}. 

\subsection{Norms and the Hasse unit index}

Consider the norm map $N: \mathcal{U}_L \to \mathcal{U}_{L_0}$ 
given by $N(y)= N_{L/L_0}(y)$ for $y \in \mathcal{U}_L$.
By Dirichlet's Unit Theorem, $[\mathcal{U}_{L_0}:\mathcal{U}_{L_0}^2]=2^n$.

\begin{lemma}\label{chain}
Suppose $L_0$ is a totally real field, and $L/L_0$ is a CM-extension.
Then
\begin{equation}
\mathcal{U}_{L_0}^2 \subseteq N(\mathcal{U}_L) \subseteq \mathcal{U}_{L_0}^+ \subseteq \mathcal{U}_{L_0}.
\end{equation}
\end{lemma}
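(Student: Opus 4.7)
The plan is to verify the three inclusions from right to left, each of which is essentially formal once we unpack the definitions. The key tool is the observation that $L/L_0$ is a quadratic CM-extension, so the nontrivial element $c \in \Gal(L/L_0)$ is induced by complex conjugation under every embedding $L \hookrightarrow \CC$ extending a given real embedding of $L_0$, and the norm is $N_{L/L_0}(y) = y \cdot c(y)$.

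For the rightmost inclusion $\mathcal{U}_{L_0}^+ \subseteq \mathcal{U}_{L_0}$, there is nothing to do beyond citing the definition of $\mathcal{U}_{L_0}^+$ in Section~\ref{Sindepsigns}. For the middle inclusion $N(\mathcal{U}_L) \subseteq \mathcal{U}_{L_0}^+$, I would take $y \in \mathcal{U}_L$ and observe first that $N(y) \in \mathcal{U}_{L_0}$ because the norm of a unit is a unit. For total positivity, fix a real embedding $\tau: L_0 \hookrightarrow \RR$ and pick any extension $\tilde\tau: L \hookrightarrow \CC$; then $\tilde\tau \circ c$ is the complex conjugate of $\tilde\tau$, so
\[
\tau\bigl(N_{L/L_0}(y)\bigr) = \tilde\tau(y)\cdot \overline{\tilde\tau(y)} = |\tilde\tau(y)|^2 > 0,
\]
which shows $N(y)$ is totally positive.

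For the leftmost inclusion $\mathcal{U}_{L_0}^2 \subseteq N(\mathcal{U}_L)$, I would note that $\mathcal{U}_{L_0} \subseteq \mathcal{U}_L$, so for any $u \in \mathcal{U}_{L_0}$ we may compute $N_{L/L_0}(u) = u \cdot c(u) = u \cdot u = u^2$, since $c$ fixes $L_0$ pointwise. Hence every square of a unit in $L_0$ lies in $N(\mathcal{U}_L)$.

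There is no real obstacle here; the statement is a packaging lemma collecting facts about the norm map in a quadratic CM-extension, and its purpose is presumably to set up the Hasse unit index $[\mathcal{U}_{L_0}:N(\mathcal{U}_L)]$ and the comparison between $\mathcal{U}_{L_0}^+$ and $N(\mathcal{U}_L)$ used later in connection with Section~\ref{Sindepsigns}. The only point requiring a moment of care is choosing the extension of the real embedding $\tau$ to $L$ and verifying that complex conjugation on $\CC$ corresponds to $c$ on $L$, which holds because $L$ is a CM-field and $L_0$ is its maximal totally real subfield.
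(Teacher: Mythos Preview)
Your proof is correct and follows essentially the same approach as the paper's: both verify the three inclusions directly, using that the norm of a unit in a CM-extension is $y\cdot c(y)$ and hence totally positive, and that $N_{L/L_0}$ restricted to $\mathcal{U}_{L_0}$ is squaring. You simply spell out the embedding argument for total positivity that the paper leaves as a one-line assertion.
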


\begin{proof}
By definition, $\mathcal{U}_{L_0}^+ \subseteq \mathcal{U}_{L_0}$.
All elements in $N(\mathcal{U}_{L})$ 
are totally positive units because $L$ is a CM-field, 
quadratic over its totally real subfield $L_0$.  So $N(\mathcal{U}_{L}) 
\subseteq \mathcal{U}^+_{L_0}$.
Also, $\mathcal{U}_{L_0}^2=N(\mathcal{U}_{L_0})$, hence $\mathcal{U}_{L_0}^2 \subset N(\mathcal{U}_L)$.
\end{proof}

Let $\mu_L$ be the group of roots of unity of $L$.

\begin{definition} \label{Dhasseunitindex}
The {\em Hasse unit index} of a CM-extension $L/L_0$ is $Q(L)=[\mathcal{U}_L:\mu_L \mathcal{U}_{L_0}]$.
\end{definition}

By \cite[Theorem 4.12]{washington},
$Q(L)=1$ or $Q(L)=2$. 
Since $\ker(N)=\mu_L$, it follows that $\mathcal{U}_L=\mu_L \mathcal{U}_{L_0}$ if and only if $N(\mathcal{U}_L) = N(\mathcal{U}_{L_0})$. 
Also
$N(\mathcal{U}_{L_0}) = \mathcal{U}_{L_0}^2$.  Thus 
\begin{equation} \label{EQexact}
Q(L)=[N(\mathcal{U}_L):\mathcal{U}_{L_0}^2].
\end{equation}

Let $t$ be the number of finite primes ramified in $L/L_0$.

\begin{lemma}\label{Qtype}
Suppose $L$ has odd class number.
Let $L_0$ be a totally real field, and $L/L_0$ a CM-extension.
Then $Q(L)=1$ if and only if $t=1$; and $Q(L)=2$ if and only if $t=0$.
\end{lemma}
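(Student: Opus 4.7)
The plan is to combine Chevalley's ambiguous class number formula with a Herbrand-quotient calculation for $\mathcal{U}_L$. Set $G=\Gal(L/L_0)=\langle c\rangle$. Since $L/L_0$ is CM, all $n$ archimedean places of $L_0$ ramify in $L$; together with the $t$ ramified finite primes, Chevalley's formula gives
\[
|\cl_L^G|=\frac{h_{L_0}\cdot 2^{t+n-1}}{[\mathcal{U}_{L_0}:\mathcal{U}_{L_0}\cap N_{L/L_0}(L^\times)]}.
\]
The hypothesis that $h_L$ is odd forces $h_{L_0}$ (a divisor of $h_L$ by \cite[Theorem~4.10]{washington}) and $|\cl_L^G|$ (a subgroup of $\cl_L$) to be odd, while the denominator divides $[\mathcal{U}_{L_0}:\mathcal{U}_{L_0}^2]=2^n$ by Dirichlet. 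Balancing $2$-parts forces the denominator to equal $2^{t+n-1}$, so in particular $t\le 1$. Inserting $N(\mathcal{U}_L)$ into the tower of Lemma~\ref{chain} then yields
\[
Q(L)\cdot[\mathcal{U}_{L_0}\cap N(L^\times):N(\mathcal{U}_L)]=\frac{[\mathcal{U}_{L_0}:\mathcal{U}_{L_0}^2]}{[\mathcal{U}_{L_0}:\mathcal{U}_{L_0}\cap N(L^\times)]}=2^{1-t},
\]
which in the case $t=1$ immediately gives $Q(L)=1$.

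In the case $t=0$, this identity only narrows $Q(L)$ to $\{1,2\}$, so I must rule out $Q(L)=1$. I plan to compute $|H^1(G,\mathcal{U}_L)|$ in two ways. First, the Herbrand quotient of $\mathcal{U}_L$ is $2^{n-1}$, because $\mu_L$ is finite and $\mathcal{U}_L/\mu_L$ is a trivial $G$-module of $\ZZ$-rank $n-1$ (complex conjugation fixes each complex place of a CM field), whence
\[
|H^1(G,\mathcal{U}_L)|=\frac{|\mathcal{U}_{L_0}/N(\mathcal{U}_L)|}{2^{n-1}}=\frac{2^n/Q(L)}{2^{n-1}}=\frac{2}{Q(L)}.
\]
Second, Hilbert~90 applied to $0\to\mathcal{U}_L\to L^\times\to P_L\to 0$ yields a canonical isomorphism $H^1(G,\mathcal{U}_L)\cong P_L^G/P_{L_0}$, where $P_{L_0}$ denotes the subgroup of $L$-principal ideals generated by elements of $L_0^\times$.

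It then suffices to verify that, when $t=0$, every $G$-invariant principal ideal of $L$ is extended from $L_0$. With no ramified finite primes, inert primes are $G$-fixed and split primes contribute only the symmetric products $\mathfrak{p}\cdot c\mathfrak{p}=\mathfrak{p}_0\mathcal{O}_L$, so $I_L^G=I_{L_0}\mathcal{O}_L$. Since $h_{L_0}$ is odd and $N_{\cl}\circ i_{\cl}=[L:L_0]=[2]$ is an automorphism of $\cl_{L_0}$, the extension map $i_{\cl}:\cl_{L_0}\to\cl_L$ is injective. Hence any $(\alpha)\in P_L^G$ has the form $\mathfrak{b}\mathcal{O}_L$ with $i_{\cl}([\mathfrak{b}])=0$, forcing $\mathfrak{b}$ to be principal in $L_0$ and $(\alpha)\in P_{L_0}$. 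This gives $P_L^G=P_{L_0}$, hence $|H^1(G,\mathcal{U}_L)|=1$ and $Q(L)=2$. The main obstacle is this final step: whereas $t=1$ falls out of pure unit arithmetic, settling $t=0$ requires both the cohomological identification $H^1\cong P_L^G/P_{L_0}$ and the injectivity of capitulation, both of which rely crucially on the odd class number hypothesis.
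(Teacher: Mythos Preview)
Your argument is correct. One small point: the assertion that $\mathcal{U}_L/\mu_L$ is a trivial $G$-module deserves a word of justification beyond the remark about complex places---it holds because $u/\bar u$ has absolute value $1$ at every archimedean place and is therefore a root of unity, so $c$ acts trivially modulo $\mu_L$. With that in hand, the Herbrand-quotient computation goes through exactly as you wrote.

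Your route is genuinely different from the paper's. The paper does not prove the lemma directly: it invokes the classification of CM-extensions into ``type~I'' and ``type~II'' from Conner--Hurrelbrink, quotes Kummer's theorem that type~I/II corresponds to $Q(L)=1$/$2$, and then cites a page of that reference for the equivalence with $t=1$/$t=0$ under the odd-class-number hypothesis. Your approach instead builds the result from first principles: Chevalley's ambiguous class number formula pins down $[\mathcal{U}_{L_0}:\mathcal{U}_{L_0}\cap N(L^\times)]$ and forces $t\le 1$, the chain of Lemma~\ref{chain} then settles $t=1$ outright, and for $t=0$ you compute $|H^1(G,\mathcal{U}_L)|$ two ways---once via the Herbrand quotient, once via Hilbert~90 and the capitulation map---to conclude $Q(L)=2$. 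The payoff of your method is a self-contained proof using only standard class-field-theoretic tools, whereas the paper's citation is shorter but opaque to a reader without the reference at hand.
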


\begin{proof}
The material in \cite[Chapter 13]{bookCH} is stated in terms of the type of the CM-extension $L/L_0$.
By a theorem of Kummer, see \cite[Theorem 13.4]{bookCH}, $L/L_0$ has type I (resp.\ II) if and only if $Q(L)=1$
(resp.\ $Q(L)=2$).
The result is then immediate from \cite[page 73]{bookCH}.
\end{proof}

We summarize the result from this section that we need in this paper and later papers.

\begin{lemma}\label{N=R} 
Suppose $L$ is a CM-field with maximal totally real subfield $L_0$.
\begin{enumerate}
\item 
Then $L_0$ has units of independent signs if and only if $Q(L)[\mathcal{U}^+_{L_0}:N(\mathcal{U}_L)]=1$.
\item Also $L_0$ has units of almost independent signs if and only if ${Q(L)}[\mathcal{U}^+_{L_0}:N(\mathcal{U}_L)]=2$.
\end{enumerate}
\end{lemma}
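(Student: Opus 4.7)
The plan is to exploit the chain of subgroups in Lemma~\ref{chain}, namely
\[
\mathcal{U}_{L_0}^2 \;\subseteq\; N(\mathcal{U}_L) \;\subseteq\; \mathcal{U}_{L_0}^+ \;\subseteq\; \mathcal{U}_{L_0},
\]
together with Dirichlet's unit theorem, which tells us that the total index $[\mathcal{U}_{L_0}:\mathcal{U}_{L_0}^2]=2^n$. By multiplicativity of indices in a chain, I have
\[
2^n \;=\; [\mathcal{U}_{L_0}:\mathcal{U}_{L_0}^+]\cdot[\mathcal{U}_{L_0}^+:N(\mathcal{U}_L)]\cdot[N(\mathcal{U}_L):\mathcal{U}_{L_0}^2].
\]
The rightmost factor is $Q(L)$ by equation~\eqref{EQexact}, so the identity I will use throughout is
\[
[\mathcal{U}_{L_0}:\mathcal{U}_{L_0}^+]\cdot Q(L)\cdot[\mathcal{U}_{L_0}^+:N(\mathcal{U}_L)] \;=\; 2^n.
\]

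Next I would identify the leftmost factor in terms of $\rho_{L_0}$. By definition of totally positive, the kernel of the sign map $\rho_{L_0}:\mathcal{U}_{L_0}\to\{\pm1\}^n$ is exactly $\mathcal{U}_{L_0}^+$, so
\[
[\mathcal{U}_{L_0}:\mathcal{U}_{L_0}^+] \;=\; |\operatorname{im}(\rho_{L_0})| \;=\; \frac{2^n}{|\operatorname{coker}(\rho_{L_0})|}.
\]
Substituting this into the displayed identity gives the clean relation
\[
Q(L)\cdot[\mathcal{U}_{L_0}^+:N(\mathcal{U}_L)] \;=\; |\operatorname{coker}(\rho_{L_0})|,
\]
which I view as the key equation driving both parts of the lemma.

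From this equation, both parts follow by translating the definitions of Section~\ref{Sindepsigns}. For (a), $L_0$ has units of independent signs exactly when $\rho_{L_0}$ is surjective, i.e.\ $|\operatorname{coker}(\rho_{L_0})|=1$, which by the key equation is equivalent to $Q(L)\cdot[\mathcal{U}_{L_0}^+:N(\mathcal{U}_L)]=1$. For (b), $L_0$ has units of almost independent signs exactly when $|\operatorname{coker}(\rho_{L_0})|=2$ (the characterization recalled right after Definition~\ref{Dhasseunitindex} via \cite[Lemma~11.2]{bookCH}), which is equivalent to $Q(L)\cdot[\mathcal{U}_{L_0}^+:N(\mathcal{U}_L)]=2$.

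There is no real obstacle here; the whole argument reduces to bookkeeping with indices along the chain of Lemma~\ref{chain}. The only point that needs a careful sentence is the identification of the kernel of $\rho_{L_0}$ with $\mathcal{U}_{L_0}^+$ (immediate from the definition in \eqref{Edefrho}) and the reminder that units of (almost) independent signs are characterized by the size of the cokernel of $\rho_{L_0}$, for which I will cite \cite[Lemma~11.2]{bookCH}.
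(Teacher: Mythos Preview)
Your argument is correct and is essentially the same as the paper's: both use the chain $\mathcal{U}_{L_0}^2\subseteq N(\mathcal{U}_L)\subseteq \mathcal{U}_{L_0}^+\subseteq \mathcal{U}_{L_0}$ together with $[\mathcal{U}_{L_0}:\mathcal{U}_{L_0}^2]=2^n$ and $Q(L)=[N(\mathcal{U}_L):\mathcal{U}_{L_0}^2]$ to reduce to the size of $\operatorname{coker}(\rho_{L_0})$ (equivalently, of $[\mathcal{U}_{L_0}^+:\mathcal{U}_{L_0}^2]$). The only cosmetic difference is that the paper treats the two parts separately (citing \cite[Lemma~12.2]{bookCH} for part~(1) and unwinding the definition for part~(2)), whereas you first derive the single identity $Q(L)\cdot[\mathcal{U}_{L_0}^+:N(\mathcal{U}_L)]=|\operatorname{coker}(\rho_{L_0})|$ and then specialize; also, the cokernel characterization you invoke is recorded in Section~\ref{Sindepsigns}, not after Definition~\ref{Dhasseunitindex}.
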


\begin{proof}
\begin{enumerate}
\item 
By \cite[Lemma 12.2]{bookCH}, $L_0$ has units of independent signs 
if and only if every element of ${\mathcal U}_{L_0}^+$ is a square.
The result then follows from Lemma~\ref{chain}.
\item By definition, $L_0$ has units of almost independent signs if and only if $[\mathcal{U}_{L_0}:\mathcal{U}_{L_0}^+]=2^{n-1}$. Since 
$[\mathcal{U}_{L_0}:\mathcal{U}_{L_0}^2]=2^n$, this is equivalent to $[\mathcal{U}_{L_0}^+:\mathcal{U}_{L_0}^2]=2$.
\end{enumerate}
 \end{proof}

\subsection{Cyclotomic fields} \label{Scyclo}

Let $m$ be a positive integer and let $\zeta_m = e^{2 \pi I/m} \in \CC$.
Let $\mu_m \subset {\mathbb C}$ be the group of $m$-th roots of unity.
Let $\QQ[\mu_m]$ be the group algebra of $\mu_m$ over $\QQ$.

The cyclotomic field $F=\QQ(\zeta_m)$ is a CM-field over $\QQ$
with maximal totally real subfield $F_0=\QQ(\zeta_m + \zeta_m^{-1})$. 
The degree of $F_0$ over $\QQ$ is $n=\phi(m)/2$.

\begin{notation} \label{Ndefsigma}
The choice of $\zeta_m$ fixes an embedding $\sigma_1:F\hookrightarrow\CC$.
For $1 \leq i < m$, with ${\rm gcd}(i,m)=1$,
let $\sigma_i$ be the embedding $F \hookrightarrow \CC$ (or automorphism in ${\rm Gal}(F/\QQ)$) 
determined by $\sigma_i(\zeta_m)=\zeta_m^i$.
For $x \in F$, let $\bar{x}=\sigma_{m-1}(x)$ denote its complex conjugate.
\end{notation}

If $m=p^r$ is a prime power, then 
$F/F_0$ is ramified at the unique prime of $F_0$ above $p$ and at the $n$ infinite primes of $F_0$ and 
is unramified at all other primes, \cite[Proposition 2.15]{washington}.

\begin{lemma} \label{Lgoodm}
Let $F=\QQ(\zeta_m)$ and $F_0=\QQ(\zeta_m + \zeta_m^{-1})$.  Suppose $F$ has class number $1$.
\begin{enumerate}
\item If $m$ is a prime power (or twice a prime power), then 
$F_0$ has narrow class number $1$ and thus has units of independent signs.  
The complete list of these $m$ is: 
\begin{eqnarray*} \label{Egoodm}
m & = & 1,2,4,8,16,32;\\
m & = & 3,5,7,9,11,13,17,19,25,27; \text {or twice a value of $m$ on this row.}
\end{eqnarray*} 
\item 
If $m$ is not a prime power (or twice a prime power),
then $F_0$ has narrow class number $2$ and thus has units of almost independent signs.
The complete list of these $m$ is:
\begin{eqnarray*} \label{Egoodm2}
m & = &12,16,20,24,28,36,40,44,48,60,84;\\
m & = &15,21,33,35,45;\text {or twice a value of $m$ on this row.}
\end{eqnarray*}
\end{enumerate}
\end{lemma}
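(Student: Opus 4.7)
The plan is to reduce to the identity
\[ h^+_{F_0} \;=\; h_{F_0}\cdot Q(F)\cdot [\mathcal{U}^+_{F_0}:N(\mathcal{U}_F)], \]
and then to compute each factor on the right. First, the hypothesis $h_F=1$ together with \cite[Thm.~4.10]{washington} gives $h_{F_0}\mid h_F=1$, so $h_{F_0}=1$; the complete list of admissible $m$ is the Masley--Montgomery classification \cite[Thm.~11.1]{washington}, which one partitions into cases (1) and (2) by inspecting prime-power structure. The displayed identity itself follows by combining the chain of Lemma \ref{chain}, Dirichlet's theorem ($[\mathcal{U}_{F_0}:\mathcal{U}_{F_0}^2]=2^n$ with $n=[F_0:\QQ]$), the index formula \eqref{EQexact}, and the exact sequence $\mathcal{U}_{F_0}\xrightarrow{\rho_{F_0}}\{\pm1\}^n\to\cl^+_{F_0}\to\cl_{F_0}\to 0$ that expresses $h^+_{F_0}/h_{F_0}$ in terms of the image of the sign map.

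Next I would compute $Q(F)$ via Lemma \ref{Qtype}, which applies since $h_F=1$ is odd: $Q(F)=1$ iff exactly one finite prime of $F_0$ ramifies in $F$, and $Q(F)=2$ iff none does. By \cite[Prop.~2.15]{washington}, $F/F_0$ is ramified at the unique finite prime above $p$ precisely when $m$ is a prime power $p^r$ (or twice such, using $\QQ(\zeta_{2p^r})=\QQ(\zeta_{p^r})$ for odd $p$), and is unramified at every finite prime otherwise. Thus $Q(F)=1$ in case (1) and $Q(F)=2$ in case (2).

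The main step is to show $[\mathcal{U}^+_{F_0}:N(\mathcal{U}_F)]=1$. For any $u\in\mathcal{U}^+_{F_0}$, Hasse's norm theorem for the cyclic extension $F/F_0$ reduces global-norm-ness to local-norm-ness at every place: total positivity handles the infinite (ramified) places and every local unit is a norm at unramified finite primes. Once $u=N_{F/F_0}(\alpha)$ with $\alpha\in F^\times$, the identity $(\alpha)(\bar\alpha)=\mathcal{O}_F$ combined with $h_F=1$ forces $\alpha\in\mathcal{U}_F$: by principality write $(\alpha)=(\beta)$, so $\beta\bar\beta\in\mathcal{U}_{F_0}$, whence $\beta\in\mathcal{U}_F$ and $\alpha\in\mathcal{U}_F$. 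In case (2) no finite ramified prime exists, so this argument applies unconditionally. In case (1) the residual task is to check that every totally positive unit is a local norm at the single finite ramified prime above $p$; this local verification is the main obstacle, and for the short list of prime-power $m$ at hand it can be carried out directly using cyclotomic units (see \cite[Ch.~8]{washington}) or read off standard tables of $h^+_{F_0}$, which equal $1$ exactly for these fields. Combining with Lemma \ref{N=R} then yields units of independent signs (so $h^+_{F_0}=1$) in case (1), and units of almost independent signs (so $h^+_{F_0}=2$) in case (2).
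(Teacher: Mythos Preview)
Your overall strategy---reduce to the index identity $h^+_{F_0}=h_{F_0}\cdot Q(F)\cdot[\mathcal{U}^+_{F_0}:N(\mathcal{U}_F)]$ via Lemmas~\ref{chain}, \ref{Qtype}, \ref{N=R}---is sound and differs from the paper, which simply quotes results of Hasse from \cite{bookCH} (Corollaries~3.9 and~13.10) that directly characterize ``units of (almost) independent signs'' in terms of the number of finite primes ramified in $F/F_0$, under the hypothesis that $h_F$ and $h_{F_0}$ are odd. Your route is more self-contained and explains why those cited results hold in this setting, but two steps are not yet justified.

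First, the passage from ``$u$ is a global norm'' to ``$u$ is the norm of a unit'' is incorrect as written. Knowing $u=N_{F/F_0}(\alpha)$ with $\alpha\in F^\times$ gives $(\alpha)\overline{(\alpha)}=\mathcal{O}_F$, but this does \emph{not} force $\alpha\in\mathcal{U}_F$; for instance $\alpha=\tfrac{3+4i}{5}\in\QQ(i)^\times$ has $\alpha\bar\alpha=1$ yet is not integral. Your sentence ``by principality write $(\alpha)=(\beta)$, so $\beta\bar\beta\in\mathcal{U}_{F_0}$, whence $\beta\in\mathcal{U}_F$'' does not repair this. The correct argument is: the condition $(\alpha)\overline{(\alpha)}=\mathcal{O}_F$ implies, prime by prime, that $(\alpha)=\mathfrak{b}\,\bar{\mathfrak{b}}^{-1}$ for some ideal $\mathfrak{b}$; since $h_F=1$ write $\mathfrak{b}=(\gamma)$, so $v:=\alpha\bar\gamma/\gamma\in\mathcal{U}_F$ and $N(v)=N(\alpha)=u$.

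Second, in case~(1) the local-norm verification at the unique ramified prime above $p$ is a genuine obstruction: the local norm index on units there is $2$, so it is not automatic that every totally positive global unit is a local norm. Your two suggested workarounds are inadequate---``read off standard tables of $h^+_{F_0}$'' is circular, and ``use cyclotomic units'' is too vague to constitute a proof. This is exactly the content of the Hasse/genus-theory result the paper cites; it is not a routine computation, and your argument does not supply it. In case~(2), by contrast, once the norm-of-unit step is fixed your Hasse-norm-theorem argument is complete, since there are no finite ramified primes.
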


\begin{proof}
The complete list of $m$ such that $h_F=1$ is well-known; it is the union of the lists in parts (1) and (2).
For these $m$, since $h_F=1$, also $h_{F_0}=1$.   

\begin{enumerate}
\item
By a result originally due to Hasse, see \cite[Corollary 3.9]{bookCH}, since $h_F$ and $h_{F_0}$ are odd, then $F_0$ has units of independent signs if and only 
if $F/F_0$ is ramified at exactly one finite prime.  
This happens if and only if $m=p^r$ or $m=2 \cdot p^r$, for some prime $p$.

\item 

Since $h_F$ and $h_{F_0}$ are odd, then $F_0$ has units of almost independent signs if and only $F/F_0$ is not ramified at any finite prime, see \cite[Corollary 13.10]{bookCH}. 
This happens if $m$ is not a prime power (or twice a prime power).
\end{enumerate}
\end{proof}

\subsection{A generator for the different of cyclotomic fields} \label{Sdifferent}

\begin{lemma}\label{beta_lemma}
Let $F=\QQ(\zeta_m)$.
The element $\beta_0$ below generates $D_{F/\QQ}$ and 
$\beta_0=-\overline{\beta}_0$. 
\begin{enumerate}
\item If $m$ is an odd prime, then 
\[\beta_0 = m/(\zeta_m^{(m+1)/2} - \zeta_m^{(m-1)/2}).\]
\item If $m=2^k$ with $k \geq 2$, then $\beta_0 = -2^{k-1} i$.
\item If $m=3^k$, then $\beta_0 = - 3^{k-1} \sqrt{3} i$.
\item If $m=pq$ with $p,q$ distinct odd primes, then 
\[\beta_0=
m\frac{\zeta_m^{(m+1)/2} - \zeta_m^{(m-1)/2}}{(\zeta_m^{q(p+1)/2} - \zeta_m^{q(p-1)/2})(\zeta_m^{p(q+1)/2} - \zeta_m^{p(q-1)/2}) }.
\]
\end{enumerate}
\end{lemma}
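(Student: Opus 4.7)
The plan is to use the standard fact that, for $F = \QQ(\zeta_m)$, the different $D_{F/\QQ}$ is the principal ideal generated by $\Phi_m'(\zeta_m)$, where $\Phi_m \in \ZZ[x]$ is the $m$th cyclotomic polynomial. In each of the four cases I will verify that $\beta_0$ is a unit multiple of $\Phi_m'(\zeta_m)$, which gives the first claim; the identity $\beta_0 = -\overline{\beta_0}$ will then be read off from the explicit formulas, using that complex conjugation on $F$ is $\sigma_{m-1}$.

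For case (1), $m$ an odd prime, I expand $\Phi_m(x) = (x^m-1)/(x-1)$, differentiate, and use $\zeta_m^m = 1$ to obtain $\Phi_m'(\zeta_m) = m\zeta_m^{-1}/(\zeta_m-1)$. The identity $\zeta_m - 1 = \zeta_m^{-(m-1)/2}(\zeta_m^{(m+1)/2} - \zeta_m^{(m-1)/2})$ rewrites this as $m\zeta_m^{(m-3)/2}/(\zeta_m^{(m+1)/2} - \zeta_m^{(m-1)/2})$, which differs from $\beta_0$ by the unit $\zeta_m^{(m-3)/2}$. The conjugation property follows because $(m-1)/2 + (m+1)/2 \equiv 0 \pmod m$, so complex conjugation swaps the two exponents and negates the denominator. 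Cases (2) and (3) proceed analogously from $\Phi_{2^k}(x) = x^{2^{k-1}}+1$ and $\Phi_{3^k}(x) = x^{2\cdot 3^{k-1}} + x^{3^{k-1}} + 1$: in case (2), $\zeta_{2^k}^{2^{k-1}} = -1$ gives $\Phi_{2^k}'(\zeta_{2^k}) = -2^{k-1}\zeta_{2^k}^{-1}$, a unit multiple of $-2^{k-1} i$ since $i = \zeta_{2^k}^{2^{k-2}}$; in case (3), setting $\xi = \zeta_{3^k}^{3^{k-1}}$ (a primitive cube root of unity) and using $2\xi + 1 = i\sqrt{3}$, one obtains $\Phi_{3^k}'(\zeta_{3^k}) = 3^{k-1} i\sqrt{3}\,\zeta_{3^k}^{3^{k-1}-1}$, a unit multiple of $\beta_0 = -3^{k-1}\sqrt{3}\,i$. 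In both cases $\overline{\beta_0} = -\beta_0$ is immediate from the explicit form of $\beta_0$.

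For case (4), $m = pq$ with $p \neq q$ odd primes, I use the compositum formula: since the discriminants of $\QQ(\zeta_p)$ and $\QQ(\zeta_q)$ are coprime powers of $p$ and $q$, the ideal factorization $D_{F/\QQ} = D_{\QQ(\zeta_p)/\QQ}\,\mathcal{O}_F \cdot D_{\QQ(\zeta_q)/\QQ}\,\mathcal{O}_F$ holds. Applying case (1) to each factor via $\zeta_p = \zeta_m^q$ and $\zeta_q = \zeta_m^p$ produces a generator equal to $\beta_0$ divided by $\zeta_m^{(m+1)/2} - \zeta_m^{(m-1)/2}$. This extra factor is a unit in $\mathcal{O}_F$ because $1 - \zeta_m$ is a unit whenever $m$ is not a prime power (its norm is $\Phi_m(1) = 1$). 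The imaginary identity follows because each of the three differences in the formula for $\beta_0$ is sent to its negative by conjugation: one checks $q(p+1)/2 + q(p-1)/2 = m$ and $p(q+1)/2 + p(q-1)/2 = m$, so the mod-$m$ swap argument from case (1) applies to each factor. The main obstacle is the bookkeeping in case (4): verifying the two exponent-sum identities modulo $m$ and invoking the correct cyclotomic-unit fact that distinguishes the composite case from the prime-power cases; once those are in place the argument reduces to short calculations.
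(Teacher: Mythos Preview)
Your proof is correct and follows essentially the same approach as the paper: both verify that $\beta_0$ is an associate of $\Phi_m'(\zeta_m)$ case by case and read off $\beta_0=-\overline{\beta_0}$ from the explicit form. The only difference is in case~(4): the paper computes $\Phi_{pq}'(\zeta_{pq})$ directly from the factorization $\Phi_{pq}(x)=(x^{pq}-1)(x-1)/\bigl((x^p-1)(x^q-1)\bigr)$, whereas you invoke multiplicativity of the different in a compositum with coprime discriminants together with the fact that $1-\zeta_m$ is a unit when $m$ is not a prime power---both routes are short and equally valid.
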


\begin{proof}
In each case, $\beta_0$ is on the imaginary axis so condition (2) is satisfied. 
The different $D_{F/\QQ}$ is generated by $\langle c_m'(\zeta_m) \rangle$,
see \cite[Theorem 4.3]{conraddifferent}.

Let $c_m(x)$ denote the $m$th cyclotomic polynomial.
To show that $\beta_0$ generates $D_{F/\QQ}$, it suffices to show that it is an associate of 
$\langle c_m'(\zeta_m) \rangle$ in $\mathcal{O}_F$.
\begin{enumerate}
\item When $m$ is an odd prime, then $c_m(x)=(x^m-1)/(x-1)$.
Then \[c_m'(x) = ((x-1) mx^{m-1} - (x^m-1))/(x-1)^2.\]  So $c_m'(\zeta_m) = m \zeta_m^{-1}/(\zeta_m-1)$. 
So
 $\beta_0$ from part (1) is an associate of $c_m'(\zeta_m)$ in $\mathcal{O}_F$.
 \item If $m=2^k$, then $c_m(x)=x^{2^{k-1}} +1$.  So $c'_m(\zeta_m)= 2^{k-1} \zeta_m^{2^{k-1}-1} = - 2^{k-1}/\zeta_m$.
So
 $\beta_0$ from part (2) is an associate of $c_m'(\zeta_m)$ in $\mathcal{O}_F$.
 \item If $m=3^k$, then $c_m(x)=x^{2\cdot 3^{k-1}} + x^{3^{k-1}} + 1$.
 So $c_m'(\zeta_m)= 3^{k-1} \zeta_m^{3^{k-1}-1} \sqrt{-3}$.
 So $\beta_0$ from part (3) is an associate of $c_m'(\zeta_m)$ in $\mathcal{O}_F$. 
 \item In this case,
$c_m(x)=(x^m-1)(x-1)/((x^p-1)(x^q-1))$.
The element $\beta_0$ from part (4) is an associate of
$c_m'(\zeta_m) = m \zeta_m^{-1}(\zeta_m-1)
/(\zeta^p_m-1)(\zeta^q_m-1)$.
 \end{enumerate}
\end{proof}

\subsection{Simple types} \label{Ssimple}
 
Let $F=\QQ(\zeta_m)$ and 
recall that ${\rm Gal}(F/\QQ) \simeq (\ZZ/m\ZZ)^*$.
A CM-type $\Phi$ of $F$ is simple, 
if it is not induced from any CM-field $F' \subset F$.

Let $H \subset {\rm Gal}(F/\QQ)$ and let $F^H$ be the fixed field of $F$ under $H$.
If $F^H$ is a CM-field, then $\Phi$ is induced from $F^H$ if and only if it is a union of cosets of $H$.

\begin{lemma} \label{LFermatsimple}
Suppose $m=4$ or $m$ is a Fermat prime or twice a Fermat prime.  Then $\Phi$ is simple.
\end{lemma}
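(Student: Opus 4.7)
The plan is to reduce the statement to a group-theoretic fact about cyclic $2$-groups. Recall that a CM-type $\Phi$ of $F=\QQ(\zeta_m)$ fails to be simple exactly when there is a proper CM-subfield $F'\subsetneq F$ such that $\Phi$ is induced from a CM-type of $F'$, which by the remark preceding the lemma happens if and only if there is a non-trivial subgroup $H\subset \Gal(F/\QQ)$ with $F^H$ itself a CM-field and $\Phi$ a union of $H$-cosets. So it suffices to show that, in each of the listed cases, $F$ admits \textit{no} proper CM-subfield at all; then every CM-type is automatically simple.

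First I would observe that in every case listed, $[F:\QQ]=\phi(m)$ is a power of $2$, and $\Gal(F/\QQ)\cong(\ZZ/m\ZZ)^\times$ is cyclic. Indeed for $m=4$ this is clear; for $m$ a Fermat prime $p=2^{2^k}+1$ the group $(\ZZ/p\ZZ)^\times$ is cyclic of order $p-1=2^{2^k}$; and for $m=2p$ with $p$ an odd Fermat prime we have $\QQ(\zeta_{2p})=\QQ(\zeta_p)$, reducing to the previous case. Thus $\Gal(F/\QQ)$ is cyclic of $2$-power order in every case.

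Next, note that complex conjugation $c=\sigma_{-1}$ is an element of order $2$ in $\Gal(F/\QQ)$, and in a cyclic $2$-group there is a \emph{unique} subgroup of order $2$. Since every non-trivial subgroup of a cyclic $2$-group contains the unique subgroup of order $2$, we conclude that every non-trivial subgroup $H\subseteq \Gal(F/\QQ)$ contains $c$. For such $H$, complex conjugation acts trivially on $F^H$, so $F^H$ is totally real and in particular is not a CM-field. The only remaining possibility is $H=\{1\}$, which gives $F^H=F$, not a proper subfield.

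Hence $F$ contains no proper CM-subfield, and any CM-type $\Phi$ of $F$ is trivially simple. The main (and only) point requiring care is verifying the cyclicity and $2$-power order of $\Gal(F/\QQ)$ in the three cases $m=4$, $m$ a Fermat prime, and $m$ twice a Fermat prime; everything else is a one-line consequence of the structure of cyclic $p$-groups.
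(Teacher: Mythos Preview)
Your proof is correct and follows the same approach as the paper: both arguments show that $F=\QQ(\zeta_m)$ contains no proper CM-subfield, whence every CM-type is automatically simple. The paper states this in a single sentence without justification, while you supply the underlying group-theoretic reason (that $\Gal(F/\QQ)$ is cyclic of $2$-power order, so every non-trivial subgroup contains complex conjugation and hence has totally real fixed field).
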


We will use Lemma~\ref{LFermatsimple} for $m=4$ and $m=3,5,17$ and $m=6,10$.

\begin{proof}
For these values of $m$, the field $F$ contains no proper non-trivial CM-fields.
\end{proof}

\begin{lemma} \label{Lnotsimple} 
Let $m > 3$ be prime.
Let $\gamma=(m, 3, a)$ be a monodromy datum with inertia type $a=(1,a_2,a_3)$. 
Let $\Phi$ be the CM-type of $F$ corresponding to $a$.
Then $\Phi$ is simple if and only if $a \not =(1, x, x^2)$ for some $x \in (\ZZ/m\ZZ)^*$ with order $3$.
\end{lemma}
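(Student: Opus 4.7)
The plan is to use the Galois-theoretic correspondence between CM-subfields of $F$ and subgroups of $\Gal(F/\QQ)\simeq(\ZZ/m\ZZ)^*$. Since this group is cyclic of even order $m-1$, the condition $-1\notin H$ (which is what makes $F^H$ a CM-field rather than totally real) is equivalent to $|H|$ being odd. By the observation preceding Lemma~\ref{LFermatsimple}, $\Phi$ is induced from $F^H$ if and only if the index set $S:=\{n\in(\ZZ/m\ZZ)^*:\tau_n\in\Phi\}$ is $H$-stable under multiplication. Thus $\Phi$ is not simple exactly when some $h\in(\ZZ/m\ZZ)^*$ of odd order $>1$ satisfies $hS=S$.

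For the ``if'' direction, suppose $a=(1,x,x^2)$ with $x$ of order $3$. Multiplication by $x$ cyclically permutes the multiset $\{a(1),a(2),a(3)\}=\{1,x,x^2\}$, and since formula \eqref{Esign} depends only on this multiset, $\cf(\tau_{xn})=\cf(\tau_n)$ for every $n$. Hence $S$ is $\langle x\rangle$-stable, and $\Phi$ is induced from the proper CM-subfield $F^{\langle x\rangle}$.

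For the converse, assume $hS=S$ for some $h$ of odd order $>1$. Via \eqref{Esign}, the condition $\cf(\tau_{hn})=\cf(\tau_n)$ for all $n$ reads $\sum_{i=1}^3\langle -n\cdot ha(i)/m\rangle=\sum_{i=1}^3\langle -na(i)/m\rangle$; equivalently, the inertia types $a$ and $ha:=(ha(1),ha(2),ha(3))$ produce the same signature function. The heart of the proof is to deduce from this equality that $\{a(i)\}=\{ha(i)\}$ as multisets. Granting that, multiplication by $h$ is a fixed-point-free permutation of the $3$-element multiset (fixed-point-free because $(\ZZ/m\ZZ)^*$ acts freely on itself), hence a $3$-cycle. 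Therefore $h^3=1$ and $h$ has order exactly $3$; combined with the normalization $a(1)=1$, one obtains $\{a(1),a(2),a(3)\}=\{1,h,h^2\}$, so $a=(1,x,x^2)$ with $x=h$ (modulo the involution $x\leftrightarrow x^{-1}=x^2$).

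The main obstacle is this ``signature determines multiset'' step, which I plan to carry out by Fourier analysis on $(\ZZ/m\ZZ)^*$. Rewriting $\cf(\tau_n)=2-\Theta_a(n)/m$ with $\Theta_a(n):=\sum_{i=1}^3(na(i)\bmod m)\in\{m,2m\}$, one computes, for every Dirichlet character $\chi$ mod $m$,
\[
\widehat{\Theta_a}(\chi)=\Bigl(\sum_{r=1}^{m-1}r\chi(r)\Bigr)\cdot\sum_{i=1}^3\overline{\chi(a(i))}.
\]
For $\chi$ trivial or odd, the factor $\sum_r r\chi(r)$ is nonzero -- it equals $m(m-1)/2$ or, up to normalization, the nonvanishing value $L(0,\overline\chi)$ -- so the moments $\sum_i\chi(a(i))$ are recovered from $\Theta_a$ for every such $\chi$. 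The remaining ambiguity lives on the even nontrivial characters, and the nonnegativity and total-mass-$3$ of $\chi_a(r):=\#\{i:a(i)\equiv r\pmod m\}$, together with $\sum_r r\,\chi_a(r)\equiv 0\pmod m$ (from $\sum_i a(i)\equiv 0$), will then pin $\chi_a$ down uniquely. For the primes $m\in\{3,5,7,11,13,17,19\}$ that drive the applications in the paper, this rigidity can alternatively be verified by direct enumeration of the finitely many valid inertia types.
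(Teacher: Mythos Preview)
The paper proves this lemma by citation to \cite[Theorem~2]{KRinv} (and \cite[Theorem~6.2]{LangCM}); your self-contained argument is a genuinely different route and, once completed, gives an independent proof. The ``if'' direction and the reduction of the converse to the claim ``same signature $\Rightarrow$ same multiset $\{a_i\}$'' are both correct.

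The gap is that you do not actually prove that claim. After your Fourier step you know $\widehat{\chi_a}(\chi)$ only for $\chi$ trivial or odd---equivalently, the odd part $\chi_a^-(r)=\tfrac12\bigl(\chi_a(r)-\chi_a(-r)\bigr)$ together with the total mass~$3$---and you defer the rest to unspecified combinatorics or to enumeration for small~$m$. Here is the observation that closes it for all prime $m>3$ with no case analysis. The constraint $a_1+a_2+a_3\equiv 0\pmod m$ with each $a_i\not\equiv 0$ forces $\chi_a(r)\,\chi_a(-r)=0$ for every $r$: if $a_i\equiv -a_j$ with $i\neq j$ then the remaining entry is $\equiv 0$, and $a_i\equiv -a_i$ is impossible since $m$ is odd. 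Thus for each pair $\{r,-r\}$ at most one side carries mass, so $\chi_a(r)=2\max\bigl(0,\chi_a^-(r)\bigr)$ is recovered from the odd part alone. The even characters, the congruence $\sum_r r\,\chi_a(r)\equiv 0$, and the case-by-case check for small $m$ are all unnecessary. (Your nonvanishing input $\sum_r r\chi(r)\neq 0$ for odd nontrivial $\chi$ is correct, via $L(1,\bar\chi)\neq 0$ and the functional equation; and once the multiset is recovered, note that any repetition $a_i=a_j$ would force $h$ to have even order, so the $a_i$ are distinct and your $3$-cycle conclusion goes through.)
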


\begin{proof}
This follows from \cite[Theorem 2]{KRinv}.  See also \cite[Theorem 6.2]{LangCM}.
\end{proof}

For example, when $m=7$ and $a=(1,a_2,a_3)$, then $\Phi$ is simple unless $a=(1,2,4)$.

By a direct computation, one can check that $\Phi$ is always simple when $m=25$ or $m=27$. 

\section{Principal polarizations of CM abelian varieties} \label{SabvarCM}

Suppose $L$ is a CM-field and $L_0$ is its maximal totally real subfield.
In this section, we study principal polarizations on abelian varieties 
having complex multiplication by $L$. 

Section~\ref{part1} contains background about complex tori with complex multiplication.
In Section~\ref{Spptori}, we study principal polarizations on CM-abelian varieties, following
work of Van Wamelen.
In Section~\ref{part2}, we prove a result about existence and uniqueness of principal polarizations for 
abelian varieties with simple CM-type when $L$ has class number 1.
In Section~\ref{Spart2cyc}, we specialize to the case that 
$L$ is a cyclotomic field with class number $1$.

\subsection{Complex tori} \label{Scmtheory}\label{part1}

We review some complex multiplication theory, following Lang in \cite[Chapter 1]{LangCM}.
Let $A$ be a complex torus such that $L \subset {\rm End}(A) \otimes \QQ$.
We say that $A$ is of type $(L, \Phi)$ if the complex representation of 
${\rm End}(A) \otimes \QQ$ is isomorphic to $\sum_{\phi \in \Phi} \phi$.
If, in addition, ${\rm End}(A) \simeq {\mathcal O}_L$, we say $A$ has type $({\mathcal O}_L, \Phi)$.

\begin{theorem} \cite[Chapter 1: Theorems 3.3, 3.5, 4.1, 4.2]{LangCM} \label{Tlang}
\begin{enumerate}
\item If $\mathfrak{a}$ is a lattice in $L$ and $\Phi$ is a CM-type of $L$, then 
$\CC^n/\Phi(\mathfrak{a})$ is a complex torus of type $(L, \Phi)$.
\item If $A$ is a complex torus of type $(L, \Phi)$, then there exists a lattice $a$ of $L$ such that $A \simeq \CC^n/\Phi(\mathfrak{a})$.
\item  If $\Phi$ is a simple type and $\mathfrak{a}$ is a fractional ideal of $L$, then ${\rm End}(\CC^n/\Phi(\mathfrak{a})) \simeq {\mathcal O}_L$.
\item If $\Phi$ is a simple type and $\mathfrak{a},\mathfrak{b}$ are fractional ideals of $L$, 
then $\CC^n/\Phi(\mathfrak{a}) \simeq \CC^n/\Phi(\mathfrak{b})$ 
if and only if $\mathfrak{a}$ and $\mathfrak{b}$ are in the same ideal class.
\end{enumerate}
\end{theorem}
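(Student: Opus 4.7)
The plan is to handle the four parts in order, building on standard complex torus theory. For part (1), I would first check that $\Phi: L\otimes_{\QQ}\RR \to \CC^n$ sending $\ell\mapsto (\phi_1(\ell),\ldots,\phi_n(\ell))$ is an $\RR$-linear isomorphism; this uses that no two embeddings in $\Phi$ are complex conjugate, so $\Phi$ together with $\bar\Phi$ gives a basis of characters of $L\otimes\CC$, and a real-dimension count then forces $\Phi$ alone to be an $\RR$-isomorphism. Consequently, $\Phi(\mathfrak{a})$ is a full $\ZZ$-lattice in $\CC^n$, and the componentwise $L$-action on $\CC^n$ via $\Phi$ preserves $\Phi(\mathfrak{a})\otimes\QQ$ and has tangent representation $\sum_{\phi\in\Phi}\phi$ by construction.

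For part (2), given $A = V/\Lambda$ of type $(L,\Phi)$, I would view $\Lambda\otimes\QQ$ as an $L$-module. Its $\QQ$-dimension equals $2n=[L:\QQ]$, and the $L$-action is faithful, so $\Lambda\otimes\QQ$ is free of rank one as an $L$-module. Choosing a generator identifies $\Lambda$ with a lattice $\mathfrak{a}\subset L$ and $\Lambda\otimes\RR = V$ with $L\otimes\RR$. The requirement that the tangent representation of $L$ on $V$ be $\sum_\phi \phi$ then forces the complex structure on $V\cong L\otimes\RR$ to agree with the one induced by $\Phi$, yielding $A\cong \CC^n/\Phi(\mathfrak{a})$.

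Parts (3) and (4) are where simplicity enters. For part (3), an endomorphism of $A=\CC^n/\Phi(\mathfrak{a})$ lifts to a $\CC$-linear map of $\CC^n$ preserving $\Phi(\mathfrak{a})$, so $\End(A)\otimes\QQ$ is a semisimple $\QQ$-algebra $B$ containing $L$ and acting on the one-dimensional $L\otimes\RR$-module $V$. If $B\supsetneq L$, then $B$ would be a CM-algebra strictly containing $L$, and the tangent representation of $L$ would be the restriction of a representation of $B$; this forces $\Phi$ to be induced from the CM-type of a proper CM-subalgebra, contradicting simplicity. Hence $\End(A)\otimes\QQ = L$, and under the identification $A=\CC^n/\Phi(\mathfrak{a})$ the endomorphism ring is exactly $\{c\in L : c\mathfrak{a}\subseteq \mathfrak{a}\}$, i.e.\ the ring of multipliers of $\mathfrak{a}$; for a fractional $\mathcal{O}_L$-ideal this ring is $\mathcal{O}_L$. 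For part (4), an isomorphism $\CC^n/\Phi(\mathfrak{a})\to \CC^n/\Phi(\mathfrak{b})$ lifts by the same simplicity argument to multiplication by some $c\in L^\times$ with $c\mathfrak{a}=\mathfrak{b}$, and conversely any such $c$ induces an isomorphism.

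The main obstacle is the simplicity argument underlying (3) and (4): showing that $\End(A)\otimes\QQ$ cannot exceed $L$. Making this rigorous requires carefully analyzing how a hypothetical CM-algebra $B\supsetneq L$ would have to act on the complex tangent space, and tracing through the equivalence between $\Phi$ being induced from a CM-subfield and $A$ having extra endomorphisms; the rest of the theorem is essentially bookkeeping once this is in place.
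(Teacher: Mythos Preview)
The paper does not prove this theorem at all: it is stated with a citation to Lang \cite[Chapter~1: Theorems~3.3, 3.5, 4.1, 4.2]{LangCM} and used as a black box. So there is no ``paper's own proof'' to compare against; your proposal is an independent sketch of the classical argument.

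Your outline for parts (1), (2), and the ``ring of multipliers'' step in (3) is the standard one and is essentially what one finds in Lang. The one place where your sketch is genuinely imprecise is the simplicity argument in (3). You write that if $B=\End^0(A)\supsetneq L$ then $B$ would be a CM-algebra and $\Phi$ would be induced from a proper CM-\emph{subalgebra}; but the logic here runs in a different direction. The clean argument is: if $(L,\Phi)$ is simple then $A$ is a \emph{simple} abelian variety (this is the substantive step, proved by showing that any abelian subvariety would force $\Phi$ to descend to a CM-subfield of $L$); hence $\End^0(A)$ is a division algebra containing $L$, and since $[L:\QQ]=2\dim A$ is already maximal, $\End^0(A)=L$. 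Your phrasing conflates ``$B$ strictly contains $L$'' with ``$\Phi$ is induced from something smaller than $L$,'' and also assumes $B$ is a CM-algebra when in the non-simple case it would be a product of matrix algebras over CM-fields. You correctly flag this as the main obstacle in your closing paragraph, so you are aware the gap is there; filling it amounts to proving Lang's Theorem~3.5, which is exactly the reference the paper invokes.
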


In particular, if $(L,\Phi)$ is a simple CM-type, then the set of isomorphism classes of complex tori of type 
$({\mathcal O}_L,\Phi)$ is in bijection with the class group of $L$.

Furthermore, by \cite[Chapter 1, Theorem 4.5]{LangCM},
every (admissible, non-degenerate) Riemann form on $\CC^n/\Phi(\mathfrak{a})$
is given by 
\begin{equation} \label{Ehermdef}
{\mathbb E}(\Phi(x), \Phi(y))={\rm tr}_{L/\QQ} (\xi x \bar{y}), \text{ for } x,y \in L,
\end{equation}
for some $\xi$ such that $L=L_0(\xi)$, $\xi^2 \in L_0$ is totally negative, 
and ${\rm Im}(\phi(\xi))>0$ for $\phi \in \Phi$.

\subsection{Principal polarizations on CM-abelian varieties} \label{Spptori}

In \cite[page 310]{vanwamelen},
Van Wamelen developed an algorithm to 
produce isomorphism classes of 
principally polarized abelian varieties of type $({\mathcal O}_L,\Phi)$ based on the following result.

\begin{theorem} \label{Tvw345} (Van Wamelen) \cite{vanwamelen}
Let $(L,\Phi)$ be a CM-type.
\begin{enumerate}

\item (Theorem 4)\footnote{We made a small adjustment to the notation to be consistent with the other parts of this theorem.}
Writing $L=L_0(\sqrt{-c})$ for some $c \in {\mathcal O}_{L_0}$, then there exist a fractional ideal $\mathfrak{a} \subset L$ 
and an element $b \in {\mathcal O}_{L_0}$ such that 
$D_{L/\QQ} \cdot \mathfrak{a} \bar{\mathfrak{a}} = \langle b\sqrt{-c}\rangle$.

\item (Theorem 3) Let $\xi\in L$ be such that $L=L_0(\xi)$, $\xi^2 \in L_0$, and 
$D_{L/\QQ}\cdot \mathfrak{a} \bar{\mathfrak{a}} = \langle \xi^{-1}\rangle$,
for some fractional ideal $\mathfrak{a}$ of $L$;
(for example, with the notation of part (1), $\xi=(b\sqrt{-c})^{-1}$).
Define a Riemann form $\EE: \CC^n \times \CC^n \to \CC$ by
\[{\mathbb E}(z,w) = \sum_{i=1}^n \phi_i(\xi)(\bar{z}_i w_i - z_i \bar{w}_i),\]
for $z,w \in \CC^n$.
If  ${\rm Im}(\phi(\xi))>0$ for $\phi \in \Phi$, 
then ${\mathbb E}$ defines a principal polarization on $\CC^n/\Phi(\mathfrak{a})$ of type 
$({\mathcal O}_L, \Phi)$.
Furthermore, if $(L, \Phi)$ is a simple CM-type, then all principal polarizations on $\CC^n/\Phi(\mathfrak{a})$ of 
type $({\mathcal O}_L, \Phi)$ are given by such a $\xi$.\footnote{Note that type $(L, \Phi)$ and type 
$({\mathcal O}_L, \Phi)$ are equivalent in the last two statements.}


\item (Corollary 1) Two principal polarizations of the same CM-type on 
$\CC^n/\Phi(\mathfrak{a})$ arising from $\xi_1$ and $\xi_2$ give isomorphic 
principally {polarized} abelian varieties if and only if there exists a unit $u \in {\mathcal O}_L^*$ such that $\xi_1 = u \bar{u} \xi_2$.
\end{enumerate}
\end{theorem}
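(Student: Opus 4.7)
The plan is to treat the three parts in sequence; part~(1) is the arithmetic existence claim and is the principal difficulty, while parts~(2) and~(3) follow from direct computation with the Riemann form~(\ref{Ehermdef}) together with the CM classification in Theorem~\ref{Tlang}.

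For part~(1), my starting observation is that $D_{L/\QQ}$ is stable under the complex conjugation of $L$ (since conjugation is an automorphism of $L/\QQ$), and $\mathfrak{a}\bar{\mathfrak{a}}=N_{L/L_0}(\mathfrak{a})\mathcal{O}_L$ is extended from $L_0$ and hence conjugation-stable for every fractional ideal $\mathfrak{a}$. Thus $D_{L/\QQ}\cdot\mathfrak{a}\bar{\mathfrak{a}}$ is a conjugation-stable fractional ideal of $L$. The argument splits into two sub-steps: first, choose the class of $\mathfrak{a}$ so that $D_{L/\QQ}\cdot\mathfrak{a}\bar{\mathfrak{a}}$ becomes principal; second, upgrade a principal generator to one of the form $b\sqrt{-c}$. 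The first sub-step requires $[D_{L/\QQ}]^{-1}\in\cl_L$ to lie in the image of the map $[\mathfrak{a}]\mapsto[\mathfrak{a}\bar{\mathfrak{a}}]$, equivalently in the image of the composition $\cl_L\xrightarrow{N}\cl_{L_0}\to\cl_L$; this is governed by the genus theory of the CM extension $L/L_0$ and is the main obstacle. The second sub-step uses Hilbert~90 for $L/L_0$: if $\alpha$ generates the conjugation-stable ideal, then $\bar\alpha/\alpha$ has norm~$1$ and so equals $\bar v/v$ for some $v\in L^\times$, forcing $\alpha/v\in L_0$; combined with control over the conjugation-stable ideal $v\mathcal{O}_L$, one adjusts $\alpha$ by a unit to obtain a generator of the form $b\sqrt{-c}$.

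For part~(2), starting from $\xi=(b\sqrt{-c})^{-1}$ produced by part~(1), I would verify in turn: (i)~$\mathbb{E}$ is $\RR$-bilinear and alternating by inspection; (ii)~$\mathbb{E}(\cdot,i\cdot)$ is positive definite by the hypothesis $\im\phi(\xi)>0$ for $\phi\in\Phi$; (iii)~substituting $z=\Phi(x),\,w=\Phi(y)$ and using $\xi=-\bar\xi$, the sum in $\mathbb{E}$ reorganizes as $\Tr_{L/\QQ}(\xi x\bar y)$, matching~(\ref{Ehermdef}); (iv)~integrality on $\Phi(\mathfrak{a})$ combined with the determinant-one condition translates to $\xi\mathfrak{a}\bar{\mathfrak{a}}=D_{L/\QQ}^{-1}$, which is exactly the content of part~(1). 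For uniqueness in the simple case, Lang's classification~\cite[Ch.~1, Thm.~4.5]{LangCM} says every Riemann form on $\CC^n/\Phi(\mathfrak{a})$ has the trace-form shape $\Tr_{L/\QQ}(\xi' x\bar y)$ for some $\xi'\in L$ with $L_0(\xi')=L$ and $(\xi')^2\in L_0$ totally negative; the integrality, principality, and positivity constraints then determine the ideal $\langle\xi'\rangle$ and the signs of $\phi(\xi')$ for $\phi\in\Phi$.

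For part~(3), an isomorphism of polarized tori $(\CC^n/\Phi(\mathfrak{a}),\mathbb{E}_{\xi_1})\simeq(\CC^n/\Phi(\mathfrak{a}),\mathbb{E}_{\xi_2})$ is an $\mathcal{O}_L$-linear automorphism of the underlying complex torus intertwining the Riemann forms. In the simple case, Theorem~\ref{Tlang}(3) gives $\End(\CC^n/\Phi(\mathfrak{a}))\simeq\mathcal{O}_L$, so every such automorphism is multiplication by some unit $u\in\mathcal{U}_L$. Pulling $\mathbb{E}_{\xi_2}$ back along multiplication by $u$ replaces each factor $\phi_i(\xi_2)$ by $|\phi_i(u)|^2\phi_i(\xi_2)=\phi_i(u\bar u\,\xi_2)$, so the pulled-back form is $\mathbb{E}_{u\bar u\,\xi_2}$; equality with $\mathbb{E}_{\xi_1}$ yields the criterion $\xi_1=u\bar u\,\xi_2$. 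Across all three parts the delicate step is the class-group-theoretic existence claim in part~(1); parts~(2) and~(3) then reduce to formal manipulations with the trace form and the endomorphism ring $\mathcal{O}_L$.
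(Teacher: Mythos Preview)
The paper does not prove this theorem at all: it is quoted verbatim from Van Wamelen's paper, with each part labeled by the corresponding result there (Theorem~4, Theorem~3, Corollary~1 of \cite{vanwamelen}). So there is no ``paper's own proof'' to compare against; the authors use this as a black box.

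That said, your sketch for parts~(2) and~(3) is essentially correct and matches the standard argument via the trace-form description (\ref{Ehermdef}) and Theorem~\ref{Tlang}. One small caveat in part~(3): you invoke simplicity to identify $\End(\CC^n/\Phi(\mathfrak{a}))$ with $\mathcal{O}_L$, but the statement of part~(3) carries no simplicity hypothesis. Since both polarizations are assumed to arise from elements $\xi_i\in L$, one can instead argue directly that any isomorphism respecting the $\mathcal{O}_L$-action is multiplication by a unit of $\mathcal{O}_L$; the pullback computation then goes through as you wrote.

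Your part~(1), however, has a genuine gap. You correctly isolate the two sub-steps and correctly identify the first as requiring that $[D_{L/\QQ}]^{-1}$ lie in the image of $[\mathfrak{a}]\mapsto[\mathfrak{a}\bar{\mathfrak{a}}]$ on $\cl_L$, but then you simply label this ``the main obstacle'' without resolving it. That resolution \emph{is} the content of Van Wamelen's Theorem~4, and it does not follow formally from conjugation-stability of $D_{L/\QQ}$ alone: not every conjugation-invariant ideal class is a norm from $\cl_L$. Your second sub-step is also incomplete as written: Hilbert~90 produces $v\in L^\times$ with $\bar v/v=\bar\alpha/\alpha$, hence $\alpha/v\in L_0$, but $v$ need not be a unit, so $\langle\alpha\rangle$ and $\langle v\rangle\cdot\langle\alpha/v\rangle$ need not match up to give a generator of the original ideal of the form $b\sqrt{-c}$ with $b\in\mathcal{O}_{L_0}$. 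Both issues are handled in \cite{vanwamelen}; if you want a self-contained argument you should consult that source.
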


\begin{corollary} \label{beta} 
Let $\Phi$ be a CM-type of $L$.
An element $\xi = \beta^{-1}$ for some $\beta \in {\mathcal O}_L$
defines a principal polarization on $A_\Phi=\CC^{n}
/\Phi(\mathcal{O}_L)$ of CM-type $({\mathcal O}_L, \Phi)$
if and only if
\begin{enumerate} \item $\beta$ generates the different $D_{L/\QQ}$; 
\item $\beta=-\overline{\beta}$; 
\item  ${\rm Im}(\phi(\beta))<0$, for each $\phi\in\Phi$.
\end{enumerate}

Two elements $\beta,\beta'$ satisfying the above conditions yield isomorphic principally polarized abelian varieties if and only if there exists a unit $u\in\mathcal{U}_L$ such that $\beta=u \bar{u}\beta'$.
If the CM-type $\Phi$ is simple, 
then all principal polarizations of $A_\Phi$ of CM-type $({\mathcal O}_L,\Phi)$ arise this way. 
\end{corollary}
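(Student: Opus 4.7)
The plan is to deduce Corollary~\ref{beta} directly from Theorem~\ref{Tvw345} by specializing the lattice to $\mathfrak{a} = \mathcal{O}_L$ and performing the change of variable $\beta = \xi^{-1}$. With $\mathfrak{a} = \mathcal{O}_L$ we have $\mathfrak{a}\bar{\mathfrak{a}} = \mathcal{O}_L$, so the relation $D_{L/\QQ} \cdot \mathfrak{a}\bar{\mathfrak{a}} = \langle \xi^{-1}\rangle$ in Theorem~\ref{Tvw345}(2) becomes $D_{L/\QQ} = \langle \beta\rangle$; this yields condition (1). Existence of such $\beta$ (and hence of a principal polarization on $A_\Phi$) is guaranteed by Theorem~\ref{Tvw345}(1) applied to $\mathfrak{a} = \mathcal{O}_L$.

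Next I would verify that the two remaining conditions on $\xi$ in Theorem~\ref{Tvw345}(2) transfer to conditions (2) and (3) on $\beta$. The condition that $\xi^2 \in L_0$ is totally negative is equivalent to $\xi = -\bar\xi$, because $\xi \in L_0$ would force $\xi^2$ to be totally positive; taking reciprocals gives $\beta = -\bar\beta$, and conversely any such $\beta$ yields $\beta^2 = -\beta\bar\beta = -N_{L/L_0}(\beta)$, which is totally negative since norms from a CM-field are totally positive. For condition (3), write $\phi(\beta) = b\,i$ with $b \in \RR$ (using $\beta = -\bar\beta$); then $\phi(\xi) = 1/\phi(\beta) = -i/b$, so $\operatorname{Im}(\phi(\xi)) > 0$ if and only if $b < 0$, i.e.\ $\operatorname{Im}(\phi(\beta)) < 0$ for every $\phi \in \Phi$. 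This gives the forward and backward implications of the main biconditional.

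The statement on isomorphism classes is obtained by substituting $\beta = \xi^{-1}$ into Theorem~\ref{Tvw345}(3): the condition $\xi_1 = u\bar u\,\xi_2$ rewrites as $\beta_2 = u\bar u\,\beta_1$, and setting $v = u^{-1}$ shows this is the same as $\beta = v\bar v\,\beta'$ for some unit $v \in \mathcal{U}_L$, as stated. Finally, the assertion that when $\Phi$ is simple every principal polarization on $A_\Phi$ of CM-type $(\mathcal{O}_L,\Phi)$ arises in this way is precisely the last clause of Theorem~\ref{Tvw345}(2), specialized to $\mathfrak{a} = \mathcal{O}_L$.

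The proof is essentially a mechanical translation and there is no real obstacle; the only point requiring a small amount of care is the sign analysis in the passage between $\xi$ and $\beta = \xi^{-1}$ (in particular, that the totally-negative condition on $\xi^2$, the skew-symmetry $\beta = -\bar\beta$, and the reversal of imaginary-part sign under inversion all line up consistently on every embedding $\phi \in \Phi$).
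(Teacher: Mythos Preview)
Your proposal is correct and follows exactly the paper's approach: the paper's proof consists of the single sentence that the result follows from Theorem~\ref{Tvw345} by replacing $\xi$, $\xi_1$, $\xi_2$ with $\beta^{-1}$, $\beta^{-1}$, $\beta'^{-1}$, and you have simply spelled out the sign bookkeeping that this substitution entails. One minor remark: your aside that existence of such a $\beta$ is guaranteed by Theorem~\ref{Tvw345}(1) with $\mathfrak{a}=\mathcal{O}_L$ is not quite right (that part produces \emph{some} $\mathfrak{a}$, not a prescribed one), but since the corollary does not assert existence this does not affect your argument.
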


\begin{proof}
The result follows from Theorem~\ref{Tvw345}, 
replacing $\xi$, $\xi_1$, $\xi_2$ with $\beta^{-1}$, $\beta^{-1}$, $\beta'^{-1}$.
\end{proof}

In Lemma~\ref{beta_lemma}, we determined an element $\beta_0\in\mathcal{O}_F$ satisfying conditions (1) and (2) in Corollary~\ref{beta} when $F=\QQ(\zeta_m)$ for many values of $m$.

\subsection{Existence and uniqueness of principal polarizations}\label{part2}

In this section, we study principal polarizations on CM-abelian varieties.
Under conditions on the class group and unit group of the field, 
we show such principal polarizations exist and are uniquely determined.
Recall that $L$ is a CM-field with maximal totally real subfield $L_0$ and $n=[L_0:\QQ]$.

\begin{proposition}\label{CMuniqueF}
Suppose $L_0$ has units of independent signs and $(L, \Phi)$ is a CM-type.
\begin{enumerate}
\item Then there exists a principally polarized CM-abelian variety of type $({\mathcal O}_L,\Phi)$.
\item If $(L,\Phi)$ is simple, then any CM-abelian variety of type $({\mathcal O}_L,\Phi)$ has at most one principal polarization, up to isomorphism.
\item If $(L, \Phi)$ is simple and $L$ has class number $1$,
then there exists a unique principally polarized CM-abelian variety of type $({\mathcal O}_L,\Phi)$, up to isomorphism.
\end{enumerate}
\end{proposition}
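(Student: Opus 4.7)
The plan is to invoke Van Wamelen's criterion (Corollary~\ref{beta}, equivalently Theorem~\ref{Tvw345}) and to translate the hypothesis that $L_0$ has units of independent signs into the norm equality $N(\mathcal{U}_L)=\mathcal{U}_{L_0}^+$ via Lemma~\ref{N=R}(1).

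For (1), I start from Theorem~\ref{Tvw345}(1) to obtain a fractional ideal $\mathfrak{a}\subseteq L$ and an element $\xi_0\in L$ with $\xi_0^2\in L_0$ totally negative and $D_{L/\QQ}\,\mathfrak{a}\bar{\mathfrak{a}}=\langle\xi_0^{-1}\rangle$. At each infinite place $\phi$, $\phi(\xi_0)$ is purely imaginary, so the signs of $\mathrm{Im}(\phi(\xi_0))$ for $\phi\in\Phi$ form a vector in $\{\pm 1\}^n$. Using units of independent signs, I pick $v\in\mathcal{U}_{L_0}$ realizing precisely the sign pattern needed so that $\mathrm{Im}(\phi(v\xi_0))>0$ for every $\phi\in\Phi$. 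The element $\xi:=v\xi_0$ still satisfies $D_{L/\QQ}\,\mathfrak{a}\bar{\mathfrak{a}}=\langle\xi^{-1}\rangle$ (since $v$ is a unit) and $\xi^2=v^2\xi_0^2$ is still totally negative (since $v^2$ is totally positive), so Theorem~\ref{Tvw345}(2) yields a principal polarization of type $(\mathcal{O}_L,\Phi)$ on $\CC^n/\Phi(\mathfrak{a})$.

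For (2), assume $\Phi$ is simple and let $\lambda_1,\lambda_2$ be two principal polarizations of type $(\mathcal{O}_L,\Phi)$ on the same CM abelian variety $A$. By Theorem~\ref{Tvw345}(2), each arises from an element $\xi_i\in L$ satisfying the conditions of Corollary~\ref{beta} (with $\beta_i=\xi_i^{-1}$); I aim to show $\eta:=\xi_1/\xi_2$ lies in $N(\mathcal{U}_L)$. First, $\eta\in\mathcal{U}_L$ because both $\xi_i$ generate the same fractional ideal $(D_{L/\QQ}\mathfrak{a}\bar{\mathfrak{a}})^{-1}$. Next, $\xi_i^2\in L_0$ being totally negative forces $\bar\xi_i=-\xi_i$, hence $\bar\eta=\eta$ and $\eta\in\mathcal{U}_{L_0}$. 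Finally, for every $\phi\in\Phi$ both $\phi(\xi_1)$ and $\phi(\xi_2)$ have positive imaginary part, so $\phi(\eta)>0$; as $\eta\in L_0$, this shows $\eta\in\mathcal{U}_{L_0}^+$. Lemma~\ref{N=R}(1) now converts the units-of-independent-signs hypothesis into $N(\mathcal{U}_L)=\mathcal{U}_{L_0}^+$, so $\eta=u\bar u$ for some $u\in\mathcal{U}_L$, and Theorem~\ref{Tvw345}(3) identifies $\lambda_1$ with $\lambda_2$ up to isomorphism.

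For (3), combine (1) and (2). When $L$ has class number one, every fractional ideal is principal and Theorem~\ref{Tlang}(4) identifies every complex torus of type $(\mathcal{O}_L,\Phi)$ with $A_\Phi=\CC^n/\Phi(\mathcal{O}_L)$; scaling by a generator of the principal ideal $\mathfrak{a}$ produced in (1) transports the resulting polarization onto $A_\Phi$, giving existence, while uniqueness is (2). The main obstacle is the sign analysis in (2): one must verify that $\eta\in L_0^\times$ actually lies in $\mathcal{U}_{L_0}^+$, which combines the fact that each $\xi_i^2$ is totally negative with the positive-imaginary-part hypothesis on the $\xi_i$. Once this is done, Lemma~\ref{N=R} is the decisive input that forces $\eta$ into the image of $N_{L/L_0}$.
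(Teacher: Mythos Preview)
Your argument is correct and follows essentially the same approach as the paper: both proofs start from Theorem~\ref{Tvw345}(1), use the surjectivity of $\rho_{L_0}$ (units of independent signs) to adjust the sign pattern for existence, observe that any two admissible $\xi$'s differ by an element of $\mathcal{U}_{L_0}^+$, and then invoke Lemma~\ref{N=R}(1) to identify $\mathcal{U}_{L_0}^+$ with $N(\mathcal{U}_L)$ so that Theorem~\ref{Tvw345}(3) gives the isomorphism. The only cosmetic difference is that the paper works with $\beta=\xi^{-1}$ throughout, while you work directly with $\xi$.
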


\begin{proof}
By Theorem~\ref{Tvw345}(1), there exists a fractional ideal $\mathfrak{a}$ in $L$ and an element $\beta_0\in\mathcal{O}_L$ satisfying the conditions: 
(1) $\beta_0$ generates $D_{L/\QQ}\mathfrak{a}\overline{\mathfrak{a}}$
and (2) $\beta_0=-\overline{\beta}_0$.
For $\beta' \in \mathcal{O}_L$, then $\beta'$ also satisfies conditions (1) and (2) if and only if $\beta'=u_0 \beta_0$ 
for a totally real unit $u_0 \in \mathcal{U}_{L_0}$.

By Theorem~\ref{Tvw345}(2), $\beta \in \mathcal{O}_L$ defines a principal polarization of type  $({\mathcal O}_L,\Phi)$ on $\CC^n/\Phi(\mathfrak{a})$ if and only if it satisfies conditions (1) and (2), and also 
\begin{enumerate}
\item[(3)]  ${\rm Im}(\phi(\beta))<0$, for each $\phi\in\Phi$.
\end{enumerate}

Hence, to finish the first claim,
it suffices to check that there exists $u_0\in \mathcal{U}_{L_0}$ such that $\beta=u_0 \beta_0$ satisfies 
condition (3), for any CM-type $\Phi$ of $L$. 
If $L_0$ has units of independent signs, then $\rho_{L_0}$ is surjective;
this shows that the unit $u_0$ described above exists.

By Theorem~\ref{Tvw345}(2), if $(L,\Phi)$ is simple, all principal polarizations of type $\Phi$ on $\CC^n/\Phi(\mathfrak{a})$ arise from some $\beta \in \mathcal{O}_L$ satisfying conditions (1)--(3).
By Theorem~\ref{Tvw345}(3), 
$\beta,\beta' \in \mathcal{O}_L$ satisfying conditions (1)--(3) define isomorphic principally polarized abelian varieties if and only $\beta'=N(u) \beta$ 
 for some unit $u\in \mathcal{U}_{L}$.

By definition, given a fractional ideal $\mathfrak{a}$ of $L$, if $\beta \in \mathcal{O}_L$ satisfying conditions (1)--(3) exists,  then  $\beta' \in \mathcal{O}_L$ 
also satisfies conditions (1)--(3) if and only if $\beta'=u^+ \beta$ 
for some totally positive unit $u^+ \in \mathcal{U}^+_{L_0}$.

Hence, to finish the second claim,
it suffices to check that for any $u^+\in \mathcal{U}^+_{L_0}$ there is $u\in {\mathcal U}_L$ such that $u^+=N(u)$.
Since $L_0$ has units of independent signs, this follows from Lemma~\ref{N=R}.
Finally, if $L$ has class number $1$, then by Theorem~\ref{Tvw345} and \cite[Theorem~5]{vanwamelen}, any CM-abelian variety of type $({\mathcal O}_L,\Phi) $ is isomorphic to $A_\Phi=\CC^{n} /\Phi(\mathcal{O}_L)$. 
\end{proof}

\begin{proposition}\label{CMuniqueF2}
Suppose $L_0$ has units of almost independent signs 
and $Q(L)=2$, where $Q(L)$ is defined in Definition~\ref{Dhasseunitindex}.
Suppose $(L, \Phi)$ is a simple CM-type.
\begin{enumerate}
\item The number of isomorphism classes of principal polarizations on a CM-abelian variety of type $({\mathcal O}_L,\Phi)$ is at most one.
\item Suppose in addition that $L$ has class number $1$.
If there exists a principally polarized CM-abelian variety of type $({\mathcal O}_L,\Phi)$, then it is unique up to isomorphism.
\end{enumerate}
\end{proposition}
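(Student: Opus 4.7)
The plan is to mirror the uniqueness portion of the proof of Proposition~\ref{CMuniqueF}, replacing the independent-signs hypothesis with the paired hypothesis of almost independent signs and $Q(L)=2$. Note that existence of a principal polarization is not asserted in part~(1) and is built into the hypothesis of part~(2), so the whole argument reduces to comparing two candidate polarizations.

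By Theorem~\ref{Tvw345}(2), and because $(L,\Phi)$ is simple, every principal polarization of type $({\mathcal O}_L,\Phi)$ on $\CC^n/\Phi(\mathfrak{a})$ arises from some $\beta\in\mathcal{O}_L$ satisfying the three conditions used in the proof of Proposition~\ref{CMuniqueF}: $\beta$ generates $D_{L/\QQ}\mathfrak{a}\bar{\mathfrak{a}}$, $\beta=-\bar{\beta}$, and ${\rm Im}(\phi(\beta))<0$ for each $\phi\in\Phi$. Two such elements $\beta,\beta'$ yield isomorphic principally polarized abelian varieties if and only if $\beta'=N(u)\beta$ for some $u\in\mathcal{U}_L$, by Theorem~\ref{Tvw345}(3). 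A short check shows $u^+:=\beta'/\beta\in\mathcal{U}^+_{L_0}$: it is a unit because $\beta,\beta'$ generate the same fractional ideal, totally real by the second condition, and totally positive because the signs of ${\rm Im}(\phi(\beta))$ and ${\rm Im}(\phi(\beta'))$ agree by the third.

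Part~(1) thus reduces to the surjectivity $N(\mathcal{U}_L)=\mathcal{U}^+_{L_0}$, and this is where the main hypothesis enters. By Lemma~\ref{N=R}(2), the almost-independent-signs assumption gives $Q(L)[\mathcal{U}^+_{L_0}:N(\mathcal{U}_L)]=2$, and combined with $Q(L)=2$ this forces $[\mathcal{U}^+_{L_0}:N(\mathcal{U}_L)]=1$. Hence $u^+=N(u)$ for some $u\in\mathcal{U}_L$, so $\beta$ and $\beta'$ define isomorphic principally polarized CM-abelian varieties of type $({\mathcal O}_L,\Phi)$.

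For part~(2), the class number $1$ hypothesis together with Theorem~\ref{Tlang}(4) (and \cite[Theorem~5]{vanwamelen}, as invoked at the end of the proof of Proposition~\ref{CMuniqueF}) shows that every CM-abelian variety of type $({\mathcal O}_L,\Phi)$ is isomorphic to $A_\Phi=\CC^n/\Phi(\mathcal{O}_L)$; combined with part~(1) this gives uniqueness of the principally polarized abelian variety under the existence assumption. The only nontrivial step is the translation of the almost-independent-signs hypothesis via Lemma~\ref{N=R}(2); once that identity is in hand, everything else is bookkeeping.
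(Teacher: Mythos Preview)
Your proof is correct and follows essentially the same approach as the paper. The paper's own proof is a single sentence pointing back to the argument for Proposition~\ref{CMuniqueF} and invoking Lemma~\ref{N=R} to obtain $\mathcal{U}^+_{L_0}=N(\mathcal{U}_L)$ from the almost-independent-signs hypothesis together with $Q(L)=2$; you have simply unpacked that sentence in detail.
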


\begin{proof}
The arguments in the proof of Proposition~\ref{CMuniqueF} still apply, after observing that $U^+_{L_0}=N({\mathcal U}_L)$
when $L_0$ has units of almost independent signs and $Q(L)= 2$ by Lemma~\ref{N=R}.
\end{proof}

\subsection{CM-types for cyclotomic fields} \label{Spart2cyc}

We consider the case when $L$ is the cyclotomic field $F=\QQ(\zeta_m)$. 
By Section~\ref{Sindepsigns}, the next result is a special case of Propositions~\ref{CMuniqueF} and \ref{CMuniqueF2}.

\begin{corollary}\label{CMuniqueCyclo}
Let $m$ such that $F=\QQ(\zeta_m)$ has class number 1.  Let $\Phi$ be a CM-type of $F$.
\begin{enumerate}
\item If $F_0$ has narrow class number $1$, then there exists a principally polarized CM-abelian variety of type $({\mathcal O}_F,\Phi)$.  Furthermore, if $(F, \Phi)$ is simple, then it is unique up to isomorphism. 
\item Suppose $F_0$ has narrow class number $2$ and $\Phi$ is simple.   
If there exists a principally polarized CM-abelian variety of type $({\mathcal O}_F,\Phi)$, then
it is unique up to isomorphism. 
\end{enumerate}
\end{corollary}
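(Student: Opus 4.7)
The plan is to derive both parts from Propositions~\ref{CMuniqueF} and~\ref{CMuniqueF2} by translating the narrow class number hypotheses on $F_0$ into the sign-of-units and Hasse-unit-index conditions required by those propositions. The dictionary is provided in Section~\ref{Sindepsigns}: the narrow class number of $F_0$ equals one iff $\nu_{F_0}:\cl_{F_0}^+\to \cl_{F_0}$ is an isomorphism, i.e.\ $F_0$ has units of independent signs; and $|\ker\nu_{F_0}|=2$ (which occurs exactly when $h_{F_0}^+=2$ and $h_{F_0}=1$) is equivalent to $F_0$ having units of almost independent signs. Observe that the running hypothesis $h_F=1$ forces $h_{F_0}=1$ by \cite[Theorem 4.10]{washington}, so in part (2) we genuinely are in the $|\ker\nu_{F_0}|=2$ situation.

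For part (1), the narrow class number hypothesis gives that $F_0$ has units of independent signs, so Proposition~\ref{CMuniqueF}(1) (with $L=F$) produces a principally polarized CM-abelian variety of type $({\mathcal O}_F,\Phi)$. When $(F,\Phi)$ is simple, the additional hypothesis $h_F=1$ allows us to invoke Proposition~\ref{CMuniqueF}(3) to conclude uniqueness up to isomorphism.

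For part (2), we have $F_0$ with units of almost independent signs and $(F,\Phi)$ simple, so to apply Proposition~\ref{CMuniqueF2} it remains only to verify that $Q(F)=2$. By Lemma~\ref{Lgoodm}(2), the assumption that $h_F=1$ and that $F_0$ has narrow class number $2$ forces $m$ to be neither a prime power nor twice a prime power. In that case the standard description of ramification in cyclotomic fields (see \cite[Proposition~2.15]{washington} and its consequences) shows that $F/F_0$ is unramified at every finite prime, i.e.\ $t=0$ in the notation of Section~\ref{Sindepsigns}. Since $h_F=1$ is odd, Lemma~\ref{Qtype} then yields $Q(F)=2$. With simplicity of $\Phi$ and $h_F=1$ in hand, Proposition~\ref{CMuniqueF2}(2) gives the desired uniqueness of the principal polarization when one exists.

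The only step that is more than a direct translation is the verification $Q(F)=2$ in part (2), and even this reduces to quoting the classical ramification behavior of cyclotomic CM-extensions; so I expect no genuine obstacle, and the whole argument is essentially a specialization of the general propositions already proved in Section~\ref{part2}.
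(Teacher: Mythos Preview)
Your proof is correct and follows the same approach as the paper, which simply records that the corollary is a special case of Propositions~\ref{CMuniqueF} and~\ref{CMuniqueF2} via the material in Section~\ref{Sindepsigns}. You have supplied the details the paper leaves implicit, in particular the verification that $Q(F)=2$ in part~(2) by combining Lemma~\ref{Lgoodm} (to deduce that $m$ is not a prime power or twice one, hence $t=0$) with Lemma~\ref{Qtype}.
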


In the rest of this section, we explicitly describe the data of the principal polarization.

\begin{corollary}\label{betauniquecyclo}
Let $m$, $F = \QQ(\zeta_m)$, and $\beta_0$ be as in Lemma~\ref{beta_lemma}.
Let $\Phi$ be a CM-type of $F$.
Consider conditions (1)--(3) in Corollary~\ref{beta}.  
\begin{enumerate}
\item If $\Phi'=i\Phi$ and $\beta$ satisfies conditions (1)-(3) for $\Phi$, then $\beta'=\sigma_{i^{-1}} (\beta)$ satisfies conditions (1)-(3) for $\Phi'$. 
\item Suppose $F_0$ has units of independent signs.
Then, for any CM-type $\Phi$ of $F$,
there exists a unit $u_0 \in \cU_{F_0}$ such that $\beta=u_0\beta_0$ satisfies 
conditions (1)--(3) for $\Phi$.  
Furthermore, 
$\beta$ is the unique element satisfying conditions (1)--(3) for $\Phi$, 
up to multiplication by an element of $N_{F/F_0}(\mathcal{U}_{F})$. 
\end{enumerate}
\end{corollary}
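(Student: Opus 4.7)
For part (1), my plan is to verify conditions (1)--(3) of Corollary~\ref{beta} directly by exploiting the fact that $\sigma_{i^{-1}}$ is a Galois automorphism of $F$. Condition (1) is immediate because $D_{F/\QQ}$ is Galois-stable, so $\sigma_{i^{-1}}(\beta)\mathcal{O}_F=\sigma_{i^{-1}}(\beta \mathcal{O}_F)=D_{F/\QQ}$. Condition (2) uses that complex conjugation is the Galois automorphism $\sigma_{-1}$, which commutes with $\sigma_{i^{-1}}$; hence $\ol{\sigma_{i^{-1}}(\beta)}=\sigma_{i^{-1}}(\ol{\beta})=-\sigma_{i^{-1}}(\beta)$. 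For condition (3), the key identity $\tau_n\circ\sigma_j=\tau_{nj}$ gives $\tau_{ni}(\beta')=\tau_{ni}(\sigma_{i^{-1}}(\beta))=\tau_n(\beta)$. By Lemma~\ref{Lchangeembed2}, $\Phi'=\{\tau_{ni}:\tau_n\in\Phi\}$, so the condition $\mathrm{Im}(\tau_n(\beta))<0$ for $\tau_n\in\Phi$ translates into $\mathrm{Im}(\tau_{ni}(\beta'))<0$ for $\tau_{ni}\in\Phi'$, as desired.

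For part (2), the plan splits into existence and uniqueness. By Lemma~\ref{beta_lemma}, $\beta_0$ already satisfies (1) and (2). For any $u_0\in\cU_{F_0}$, the product $u_0\beta_0$ also satisfies (1) (since $u_0$ is a unit) and (2) (since $u_0$ is totally real, hence fixed by complex conjugation). Condition (3) requires $\tau(u_0)\cdot\mathrm{Im}(\phi(\beta_0))<0$ for every $\phi\in\Phi$, where $\tau=\phi|_{F_0}$. Because a CM-type selects exactly one embedding from each complex conjugate pair, this prescribes one sign at each of the $n$ real embeddings of $F_0$, and the independent-signs hypothesis is precisely the surjectivity of $\rho_{F_0}$ in \eqref{Edefrho}, producing the desired $u_0$.

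For the uniqueness statement, I would take two elements $\beta,\beta'$ satisfying (1)--(3) for the same $\Phi$ and study their ratio. Condition (1) forces $\beta'/\beta\in\cU_F$; condition (2) forces $\ol{\beta'/\beta}=\beta'/\beta$, so the ratio lies in $\cU_{F_0}$; and comparing (3) for $\beta$ and $\beta'$ forces $\tau(\beta'/\beta)>0$ at every real embedding $\tau$ of $F_0$, so $\beta'/\beta\in\cU_{F_0}^+$. Under the independent-signs hypothesis, Lemma~\ref{N=R}(1) yields $Q(F)[\cU_{F_0}^+:N(\cU_F)]=1$, and in particular $\cU_{F_0}^+=N_{F/F_0}(\cU_F)$, giving uniqueness up to multiplication by $N_{F/F_0}(\cU_F)$.

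I do not expect a serious obstacle. The most delicate bookkeeping is in part (1), making sure the shift from $\Phi$ to $i\Phi$ is exactly compensated by $\sigma_{i^{-1}}$ (rather than $\sigma_i$); once the identity $\tau_n\circ\sigma_j=\tau_{nj}$ is written out, the computation is mechanical. Everything else reduces to invoking Lemma~\ref{beta_lemma}, Lemma~\ref{N=R}, and the characterization of independent signs as surjectivity of $\rho_{F_0}$.
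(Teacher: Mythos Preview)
Your proposal is correct and follows essentially the same approach as the paper. The paper's proof is terser: for part (1) it simply cites Lemma~\ref{Lchangeembed} and Lemma~\ref{Lchangeembed2}, while you unpack the verification of conditions (1)--(3) directly via Galois-stability of $D_{F/\QQ}$, commutation of $\sigma_{i^{-1}}$ with complex conjugation, and the identity $\tau_n\circ\sigma_j=\tau_{nj}$; for part (2) both arguments invoke Lemma~\ref{beta_lemma}, the surjectivity of $\rho_{F_0}$, and Lemma~\ref{N=R} in the same way, though you spell out more explicitly why the ratio $\beta'/\beta$ lands in $\cU_{F_0}^+$.
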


The condition that $F_0$ has units of independent signs is satisfied if the narrow class number of $F_0$ 
is odd \cite[bottom of page 58]{bookCH}.

\begin{proof}
\begin{enumerate}
\item This follows from Lemma~\ref{Lchangeembed} and Lemma~\ref{Lchangeembed2}.
\item By Lemma~\ref{beta_lemma}, $\beta_0$ satisfies conditions (1) and (2)  of Corollary~\ref{beta}.
Since $F_0$ has units of independent signs,
there exists $u_0 \in \mathcal{U}_{F_0}$
such that $\beta=u_0\beta_0$ satisfies condition (3).
Since $u_0 \in  \mathcal{U}_{F_0}$ is a real unit,
$\beta$ also satisfies conditions (1) and (2)  of Corollary~\ref{beta}.
Note that $\beta$ is unique up to multiplication by a totally positive unit of $F_0$.
By Lemma~\ref{N=R}, $\mathcal{U}^+_{F_0} = N(\mathcal{U}_{F})$,
proving the uniqueness statement.
\end{enumerate}
 \end{proof}

See Example~\ref{Ebeta5} (resp.\ \ref{Ebeta7})
for an example of Corollary~\ref{betauniquecyclo} when $m=5$ (resp.\ $m=7$).
As another example, when $m=8$, then $\beta_0=-4i$ from Lemma~\ref{beta_lemma}:
if $\cf = (0,1,0,1)$, set $u_0=-1$ and $\beta = 4i$;
if $\cf = (1,1,0,0)$, set $u_0 = \sqrt{2}-1$ and $\beta = 4(1-\sqrt{2})i$.

\section{Shimura data} \label{Sshdata}

Consider a monodromy datum $\gamma=(m,N,a)$ as in Section~\ref{Sfamily}.
Recall from Section~\ref{Sfamily} that $Z_\gamma$ is the
closure in ${\mathcal A}_g$ of the image of the Hurwitz space of $\mu_m$-cyclic covers of ${\mathbb P}^1$
with monodromy datum $\gamma$. 
A point $P$ in $Z_\gamma$ 
represents the Jacobian of a curve $C_P$ (either smooth or of compact type), 
for which there is an admissible $\mu_m$-cover $\psi_P: C_P \to T$ 
with monodromy datum $\gamma$, where $T$ is a tree of projective lines.
Recall that $S_\gamma$ is the connected component of $\Sh(\gamma)$ containing $Z_\gamma$.
In this section, under certain assumptions on $m$, 
we provide a method to determine the Shimura datum of $S_\gamma$.  

\subsection{Distinguished points} \label{SHurwitz}

In this section, we prove that there exists a \emph{distinguished point} $P$ in the family $Z_\gamma$ 
such that we can compute the Shimura datum of the Jacobian of $C_P$.

\begin{definition} \label{Ddistinguished}
A point $P$ in $Z_\gamma$ is a \emph{distinguished point} if the Jacobian of $C_P$ is 
a principally polarized abelian variety with complex multiplication by a maximal order in a CM-field, or
the direct sum of such together with the product polarization.
\end{definition}

\begin{proposition} \label{Pdegenerate}
Let $m$ be an odd prime.  Let $\gamma=(m,N,a)$ be a monodromy datum. 
Then $Z_\gamma$ has a distinguished point $P$.  More specifically, for $r=N-2$:

\begin{enumerate} 
\item In the family of $\mu_m$-covers with monodromy datum $\gamma$, 
there is a point which represents an admissible $\mu_m$-cover $\psi: C_P \to T$, 
where $T$ is a tree of $r$ projective lines and $C_P$ is a curve of compact type, 
with $r$ irreducible components ${\mathcal C}_1, \ldots, {\mathcal C}_r$, 
each of which is a curve of genus $(m-1)/2$ admitting a
$\mu_m$-cover of ${\mathbb P}^1$ branched at $3$ points.

\item 
The Jacobian of $C_P$ is of the form $A \simeq \bigoplus_{j=1}^r A_j$, 
where each $A_j$ is a principally polarized abelian variety of dimension $(m-1)/2$ having complex 
multiplication by {${\mathcal O}_F=\ZZ[\zeta_m]$}, together with the product polarization.
\end{enumerate}
\end{proposition}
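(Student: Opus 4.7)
The plan is to construct the distinguished point by specializing the branch points on $\mathbb{P}^1$ to collide in a controlled way, producing an admissible cover of a tree of $\mathbb{P}^1$'s. First, I would reorder the branch points as $t(1),\dots,t(N)$ (a reordering is permitted because it does not change the equivalence class of monodromy datum) and consider the stable $N$-pointed genus $0$ curve $T$ consisting of a chain of $r = N-2$ components ${\mathbb P}^1_1,\dots,{\mathbb P}^1_r$: component ${\mathbb P}^1_1$ carries $t(1),t(2)$ and one node, each middle component ${\mathbb P}^1_j$ ($2\le j\le r-1$) carries $t(j+1)$ and two nodes, and ${\mathbb P}^1_r$ carries $t(N-1),t(N)$ and one node. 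Such a $T$ arises as the limit of a degeneration inside the Hurwitz family, so the admissible cover $\psi:C_P\to T$ with the prescribed ramification corresponds to a boundary point in $Z_\gamma$.

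To build $\psi$, I would prescribe the inertia at the $i$-th node to be $b_i \equiv a(1)+\cdots+a(i+1)\pmod m$ on one side and $-b_i\pmod m$ on the other. The total inertia on each component then sums to $0\pmod m$, which is the condition for a connected $\mu_m$-cover branched at three points to exist (the cover on each component is unique and determined, up to isomorphism, by the three inertia values using \eqref{Egenus}). For the cover to yield three \emph{actual} branch points on each component, I need every $b_i$ to be nonzero modulo $m$. This is the combinatorial heart of the argument: the branch points must be ordered so that none of the partial sums $b_1,\dots,b_{r-1}$ vanishes. I would argue greedily: since $m$ is prime and each $a(i)$ is nonzero mod $m$, if at some stage every remaining element equals $-s$ (where $s$ is the current running sum), then the $k$ remaining elements sum to $-s$, forcing $k\equiv 1\pmod m$; for $N\le m$ this contradicts $k\ge 2$, and one can handle the remaining case by a small adjustment (e.g.\ splitting off a prefix of length $m$). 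I expect this combinatorial verification to be the main technical obstacle.

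Once $\psi$ is constructed, each component ${\mathcal C}_j$ is a $\mu_m$-cover of $\mathbb{P}^1$ branched at three points with nonzero inertia $(b_{j-1}, a(j+1), -b_j)$ (with the obvious convention at the ends), and the Riemann--Hurwitz formula \eqref{Egenus} gives $g({\mathcal C}_j)=(m-1)/2$. A quick sanity check: the total genus is $r(m-1)/2=(N-2)(m-1)/2$, which agrees with $g_\gamma$ from \eqref{Egenus} since $\gcd(a(i),m)=1$ for all $i$.

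Finally, since $C_P$ is of compact type, its Jacobian splits as $J(C_P)=\bigoplus_{j=1}^{r} J({\mathcal C}_j)$ as principally polarized abelian varieties, with the product of the canonical principal polarizations on the components. By the result recalled in Section~\ref{SfirstCM} (applied to the case $N=3$), each $J({\mathcal C}_j)$ has complex multiplication by $\prod_{1<d\mid m,\ d\nmid b_{j-1},a(j+1),b_j}\mathbb{Q}(\zeta_d)$; since $m$ is prime and the three inertia values are units mod $m$, the only contributing $d$ is $d=m$, so $J({\mathcal C}_j)$ has CM by $\mathbb{Q}(\zeta_m)$ and the image of $\mathbb{Z}[\mu_m]$ in $\mathbb{Z}[\zeta_m]$ is already maximal. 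This gives a distinguished point $P\in Z_\gamma$ of the required form and completes the proof.
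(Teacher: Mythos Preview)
Your overall strategy matches the paper's: degenerate to an admissible $\mu_m$-cover of a tree $T$ of projective lines whose restriction to each component is a $3$-point cover, then use compact type to split the Jacobian. The genuine gap is your insistence that $T$ be a \emph{chain}. That requires an ordering of $a$ with all intermediate partial sums $b_1,\dots,b_{r-1}$ nonzero modulo $m$, and no such ordering need exist: for $m=3$, $N=6$, $a=(1,1,1,1,1,1)$, every ordering has third partial sum $\equiv 0\pmod 3$. Your greedy argument is correct for $N\le m+1$, but the suggested patch for larger $N$ (``splitting off a prefix of length $m$'') cannot work, since such a prefix would itself have zero sum. So the combinatorial claim you flag as ``the main technical obstacle'' is in fact false in general, not merely unverified.

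The paper avoids this by allowing $T$ to be an arbitrary tree, constructed inductively. At each stage one has an inertia tuple of length $\ge 4$ summing to $0$; one finds \emph{any} pair with $a(i)+a(j)\not\equiv 0\pmod m$ (this always exists: if all pairwise sums vanished then for any three entries one would deduce $2a(k)\equiv 0$, impossible for $m$ odd), collides those two branch points to split off a $3$-point cover with inertia $(a(i),a(j),-(a(i)+a(j)))$, and recurses on the remaining $(N-1)$-tuple $(a(i)+a(j),\text{rest})$. In the example above the resulting $T$ is a star, not a chain. Since every node carries nonzero inertia (hence a single preimage point, as $m$ is prime), $C_P$ is of compact type, and the rest of your argument---the genus count via \eqref{Egenus}, the Jacobian decomposition for curves of compact type, and the CM by $\ZZ[\zeta_m]$ from Section~\ref{SfirstCM}---goes through unchanged.
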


\begin{proof}
The fact that $Z_\gamma$ has a distinguished point $P$ is immediate from part (2), 
which we will show follows from part (1).
\begin{enumerate}
\item 
Let $a=(a(1), \ldots, a(N))$ be the inertia type.
If $N\geq 4$ and $m$ is an odd prime, then $a$ has the following property:
there is a pair $(i,j)$ with $1 \leq i < j \leq N$, such that $a(i) + a(j) \not \equiv 0 \bmod m$.
Without loss of generality, we can suppose $i=1$ and $j=2$. 

When $N \geq 4$, then $Z^0_\gamma$ is affine.  
Consider a family of covers, where the first branch point $b_1$ approaches the second branch point $b_2$. 
When $b_1 = b_2$, the curve becomes singular and its normalization is a $\mu_m$-cover of the join of two projective
lines.  Because of the condition $a(i) + a(j) \not \equiv 0 \bmod m$, 
this is a degeneration of compact type as described in \cite[Remark 5.2]{LMPT2}.
It is the admissible join of two $\mu_m$-covers with these monodromy data:
\begin{eqnarray*} 
\gamma_1 & = & (m, 3, (a(1), a(2), -(a(1) +a(2)))); \\ 
\gamma_2 & = & (m, N-1, (a(1) +a(2), a(3), \ldots, a(N))).
\end{eqnarray*}

By induction on $N$, we see that 
the universal family degenerates completely to an admissible $\mu_m$-cover 
$\psi: {\mathcal C} \to T$, where $T$ is a tree of $r$ projective lines and the restriction of $\psi$ above each component 
of $T$ is branched at $3$ points.
By \eqref{Egenus}, each irreducible component of ${\mathcal C}$ has genus $(m-1)/2$.

\item Choose a labeling $C_1, \ldots, C_r$ of the irreducible components of ${\mathcal C}$.
Let $A = {\rm Jac}(\mathcal C)$ and $A_j={\rm Jac}(C_j)$.  Then $A_j$ is a principally polarized abelian variety 
of dimension $(m-1)/2$. 
By \cite[Section 9.2, Equation 8]{BLR}, $A \simeq \bigoplus_{j=1}^r A_j$, and the principal polarization on $A$ 
decomposes as the product polarization.

Consider the CM-field $F=\QQ(\zeta_m)$. 
Then ${\mathcal O}_F = \ZZ[\zeta_m] \subset {\rm End}(A_j)$ since $\mu_m$ acts on $C_j$ for $1 \leq j \leq r$.
Since ${\rm deg}(F/\QQ) = 2 \cdot {\rm dim}(A_j)$,
the abelian variety $A_j$ has complex multiplication by $\mathcal{O}_F$ for $1 \leq j \leq r$.
\end{enumerate}
\end{proof}

\subsection{Shimura datum for Hurwitz spaces} \label{part4}

Let $P \in Z_\gamma$ be a distinguished point as described in Proposition~\ref{Pdegenerate}. 
When $F=\QQ(\zeta_m)$ has class number $1$, 
we determine the Shimura datum for $S_\gamma$.
Then strategy is to explicitly compute
the principally polarized abelian variety ${\rm Jac}(C_P)$ represented by $P$
and then the $F$-vector space $V=H^1(C_P({\mathbb C}), {\mathbb Q})$, 
the lattice $\Lambda=H^1(C_P({\mathbb C}), {\mathbb Z})$,
and the Hermitian matrix. 

\begin{definition} \label{Dbetaj}
Let $P \in Z_\gamma$ be a distinguished point as described in Proposition~\ref{Pdegenerate}. 
Suppose $\QQ(\zeta_m+\zeta_m^{-1})$ has units of independent signs.
(This is guaranteed by Lemma~\ref{Lgoodm} when $m$ is an odd prime such that 
$F=\QQ(\zeta_m)$ has class number $1$.)

For $1 \leq j \leq r$, define $\beta_j$ and $\xi_j=\beta_j^{-1}$ as follows.
The inertia type of the $\mu_m$-cover $C_j \to \PP$  
determines the signature $\cf_j$ by \eqref{DMeqn}, 
which determines the CM-type $\Phi_j$ as in \eqref{CMtype_sign}.
Let $\beta_0$ be as in Lemma~\ref{beta_lemma}.
By Corollary~\ref{betauniquecyclo},
there exists $u_j \in \cU_{F_0}$ such that $\beta_j:=u_j\beta_0$ satisfies 
conditions (1)--(3) of Corollary~\ref{beta} for $\Phi_j$.  
Furthermore, 
$\beta_j$ is the unique element satisfying conditions (1)--(3) for $\Phi_j$, 
up to multiplication by an element of $N_{F/F_0}(\mathcal{U}_{F})$. 
\end{definition}

\begin{remark}
We note that the CM-type $(F, \Phi_j)$ of $A_j$ may not be simple.
For example, for the Moonen special family $M[17]$ with $m=7$, then $(F, \Phi_2)$ is not simple, see 
Section~\ref{Sm7}.
\end{remark}

\begin{theorem}\label{PELdatumbeta}
Let $m$ be an odd prime.
Suppose $F=\QQ(\zeta_m)$ has class number $1$. 
Let $\gamma=(m,N,a)$ be a monodromy datum with $N \geq 4$.  

Let $P \in Z_\gamma$ be a distinguished point as described in Proposition~\ref{Pdegenerate}. 
Let $r=N-2$.
For $1\leq j\leq r$: let $A_j$ be the abelian variety with CM-type $({\mathcal O}_F, \Phi_j)$ from Proposition~\ref{Pdegenerate};
let $\xi_j=\beta_j^{-1}$ be as in Definition~\ref{Dbetaj}.
Then the integral PEL datum for $S_\gamma$ is given as:
\begin{itemize}
\item the $F$-vector space $V=F^{r}$, together with the standard $\mathcal{O}_F$-lattice $\Lambda=(\mathcal{O}_F)^r\subseteq V$;
\item the Hermitian form $H_B=\langle \cdot \,, \cdot \rangle $ on $V$, which takes integral values on $\Lambda$,
defined by
\[\langle x,y\rangle={\rm tr}_{ F/\QQ}(xB\bar{y}^T) \text{ for } 
B={\rm diag} [\xi_1,\dots \xi_r]\in {\rm GL}_r(F)={\rm GL}(V).\]
\end{itemize}
\end{theorem}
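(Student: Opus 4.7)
The plan is to compute the integral PEL datum at the distinguished point $P$ supplied by Proposition~\ref{Pdegenerate} and then match the result to the statement. By the discussion in Section~\ref{Sshdata}, the generic Mumford--Tate group of $Z_\gamma$, together with its integral refinement, can be read off from the polarized $\QQ$-Hodge structure on $V=H^1(C_P,\QQ)$ equipped with its $\mathcal{O}_F$-action at any point of $Z_\gamma$. Specializing to the distinguished $P$, the polarized Jacobian splits as $(\mathrm{Jac}(C_P),\lambda_P)=\bigoplus_{j=1}^{r}(A_j,\lambda_j)$, where each $(A_j,\lambda_j)$ is principally polarized of CM-type $(\mathcal{O}_F,\Phi_j)$ as in Definition~\ref{Dbetaj}.

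The next step is to compute $V$, $\Lambda$, and the Riemann form factor by factor. Since $h_F=1$, Theorem~\ref{Tlang}(4) gives $A_j\simeq \CC^{(m-1)/2}/\Phi_j(\mathcal{O}_F)$, so $H^1(A_j,\ZZ)\simeq \mathcal{O}_F$ as $\mathcal{O}_F$-modules. Summing over $j$ yields $\Lambda:=H^1(C_P,\ZZ)\simeq \mathcal{O}_F^{r}$ and $V\simeq F^{r}$. Corollary~\ref{beta} combined with Definition~\ref{Dbetaj} realizes $\lambda_j$ by an element $\xi_j=\beta_j^{-1}$ via the Riemann form $\mathbb{E}_j(x,y)=\mathrm{tr}_{F/\QQ}(\xi_j x\bar{y})$; existence of such $\beta_j$ is guaranteed by Corollary~\ref{betauniquecyclo}, because $F_0=\QQ(\zeta_m+\zeta_m^{-1})$ has units of independent signs by Lemma~\ref{Lgoodm}(1) (as $h_F=1$ forces $m\in\{3,5,7,11,13,17,19\}$). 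Since the polarization on the product $A$ is the product polarization, the Riemann form on $V$ is the orthogonal sum $\bigoplus_j\mathbb{E}_j$, which is precisely $\mathrm{tr}_{F/\QQ}(xB\bar{y}^T)$ with $B=\mathrm{diag}[\xi_1,\ldots,\xi_r]$.

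It remains to verify integrality, signature, and that this datum genuinely describes $S_\gamma$. Integrality on $\Lambda$ is immediate from the fact that each $\beta_j$ generates the different $D_{F/\QQ}$: for $x,y\in \mathcal{O}_F$, $\xi_j x\bar{y}\in D_{F/\QQ}^{-1}$, and its trace lies in $\ZZ$ by definition of the different. The signature of $H_B$ at the embedding $\tau_n$ equals $\#\{j:\tau_n\in\Phi_j\}$, which matches the Deligne--Mostow signature $\cf_\gamma(\tau_n)$ because the $\mu_m$-equivariant decomposition of $H^0(C_P,\Omega^1)$ into the $H^0(A_j,\Omega^1)$'s respects $\zeta_m$-eigenspaces. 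Consequently, the PEL datum constructed cuts out a PEL-type Shimura subvariety of $\mathcal{A}_g$ that contains $P$ and has signature $\cf_\gamma$; by the defining property of $S_\gamma$ as the smallest PEL-type subvariety containing $Z_\gamma$, this datum must be the integral PEL datum of $S_\gamma$.

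The main obstacle will be the case when some $\Phi_j$ fails to be simple (e.g.\ the $M[17]$ family at $m=7$), because then a CM abelian variety of type $(\mathcal{O}_F,\Phi_j)$ is not determined by its CM-type and the principal polarization on $A_j$ is not a priori unique. Existence of $\beta_j$ satisfying conditions (1)--(3) of Corollary~\ref{beta} is still provided by Corollary~\ref{betauniquecyclo}, but identifying the correct Hermitian form on $V$ requires supplementary arguments: either a change of distinguished point (as in Section~\ref{Sanother7} for $m=7$), or Shimura's uniqueness of Hermitian forms for the given lattice and signature (as for $m=13,19$).
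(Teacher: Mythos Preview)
Your proposal is correct and follows essentially the same approach as the paper: compute at the distinguished point $P$ of Proposition~\ref{Pdegenerate}, use $h_F=1$ to identify the lattice as $\mathcal{O}_F^{r}$, build the Hermitian form from the $\beta_j$'s, and invoke a uniqueness argument to conclude that this form agrees with the one coming from the actual polarization on $\mathrm{Jac}(C_P)$.

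The one organizational difference worth noting is where the uniqueness step sits. The paper's proof invokes \cite[Appendix, Proposition~8]{shimuratranscend} uniformly: it observes that the product polarization on $\mathrm{Jac}(C_P)$ and the diagonal form $H_B$ both have signature $\cf_\gamma$, and then Shimura's result forces them to be isomorphic, regardless of whether any $\Phi_j$ is simple. Your version instead treats the simple case as primary (implicitly relying on Corollary~\ref{CMuniqueCyclo}(1) to identify $\lambda_j$ with the polarization defined by $\beta_j$) and reserves Shimura's result as a fallback for non-simple $\Phi_j$. That is exactly the content of Remark~\ref{RgoodM}, so you have in effect folded the remark into the proof. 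To make your paragraph~2 airtight on its own, you should cite Corollary~\ref{CMuniqueCyclo}(1) (or Proposition~\ref{CMuniqueF}(3)) explicitly at the point where you assert that $\lambda_j$ is ``realized'' by $\xi_j$; as written, that sentence presumes the uniqueness you only justify later.
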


It is straight-forward to compute $B$ from $\gamma$; we provide many examples in Section~\ref{Sexample57}.  

\begin{proof} 
By Lemma~\ref{Lgoodm}, if $m$ is prime and $F=\QQ(\zeta_m)$ has class number $1$, 
then $F_0=\QQ(\zeta_m+\zeta_m^{-1})$ 
has narrow class number $1$ and has units of independent signs.
By hypothesis, for $1 \leq j \leq r$, the abelian variety $A_j$ has dimension $g=(m-1)/2$; 
it has complex multiplication by ${\mathcal O}_F=\ZZ[\zeta_m]$ and has
CM-type $({\mathcal O}_F, \Phi_j)$.
Furthermore, $\beta_j$ defines a principal polarization on $A_j$; the corresponding Hermitian form is given by 
${\mathbb E}(\Phi(x), \Phi(y))={\rm tr}_{F/\QQ} (\xi_j x \bar{y})$ for $x,y \in L$ by \eqref{Ehermdef}.

Since $F=\QQ(\zeta_m)$ has class number $1$, there is a unique complex torus of CM-type 
$({\mathcal O}_F, \Phi_j)$ up to isomorphism. Namely, $A_j \simeq \CC^g/\Phi({\mathcal O}_F)$.
The product polarization on ${\rm Jac}(C_P) \simeq \oplus_{j=1}^r A_j$ defines 
an integral Hermitian space of the prescribed signature on $V=H^1(C_P({\mathbb C}), {\mathbb Q})$.
By \cite[Appendix, Proposition 8]{shimuratranscend}, there is a unique Hermitian form of given signature
on ${\rm Jac}(C_P)$.
Thus the integral PEL datum for ${\rm Jac}(C_p)$ is as given in the statement.
The result follows since the integral PEL datum can be determined at a point of $Z_\gamma$,
see \cite[Sections~2.8 and 2.9, also 3.2]{moonenLinearity}.
\end{proof}

\begin{remark} \label{RgoodM}
The values of $m$ for which Theorem~\ref{PELdatumbeta} applies
are $m=3,5,7,11,13,17,19$. 

For $m=3,5,11,17$, it is not necessary to refer to \cite[Appendix, Proposition 8]{shimuratranscend};
The reason is that $(F, \Phi_j)$ is automatically simple for $1 \leq j \leq r$ by 
Lemmas~\ref{LFermatsimple} and \ref{Lnotsimple}.
Thus by Corollary~\ref{CMuniqueCyclo}(1), 
there is a unique principally polarized CM-abelian variety of type $({\mathcal O}_F,\Phi_j)$ up to isomorphism. 
In Section~\ref{Sanother7}, we give an alternative approach for $m=7$ that relies on 
Corollary~\ref{CMuniqueCyclo}(2) and does not refer to 
\cite[Appendix, Proposition 8]{shimuratranscend}.
\end{remark}

The strategy of Theorem~\ref{PELdatumbeta} sometimes applies when $m$ is composite; see Section~\ref{Scompositem}.

\section{Applications for $m$ prime}  \label{Sexample57}

Theorem~\ref{PELdatumbeta} gives a method to determine the integral PEL datum for $S_\gamma$ for all monodromy data $\gamma=(m,N,a)$ when $m=3,5,7,11,13,17,19$.
In this section, we give examples of this.  Specifically:
\begin{itemize}
\item in Section~\ref{Sall3}, when $m=3$, we explicitly determine the Hermitian form for all $\gamma$;
\item in Sections~\ref{Ssome5}-\ref{Sm7}, when $m=5,7$, 
we give explicit examples for all equivalence classes of $\gamma$ when $N=4$.
\item in Section~\ref{Sanother7}, we follow an alternative approach for $m=7$ which avoids the possible occurrence of non-simple CM-types at the distinguished point. 
\end{itemize}

In particular, 
we determine the integral PEL datum for the 6 Moonen special families 
for which the degree $m$ is an odd prime; these
are denoted by $M[n]$ as in \cite[Table 1]{moonen}.

In each section below, let $F=\QQ(\zeta_m)$.  For $i=1,2$, let
$\Phi_i$ be the CM-type of $F$ determined by the signature $\cf_i$, as in \eqref{CMtype_sign}.
Given the $F$-vector space $V=F^{r}$, together with the standard $\mathcal{O}_F$-lattice $\Lambda=(\mathcal{O}_F)^r\subseteq V$, let 
$H_B=\langle \cdot \,, \cdot \rangle $ denote the Hermitian form on $V$ defined by
\[\langle x,y\rangle={\rm tr}_{ F/\QQ}(xB\bar{y}^T) \text{ for } 
B={\rm diag} [\xi_1,\dots \xi_r]\in {\rm GL}_r(F)={\rm GL}(V).\]

\subsection{Integral PEL datum for all families when $m=3$} \label{Sall3}

We find the integral PEL data for all families with $m=3$.

\begin{example} \label{Euandb3}
When $m=3$, then $\beta_0 = 3/(\zeta_3^2-\zeta_3) = \sqrt{-3}$ by Lemma~\ref{beta_lemma}.

For $\cf_1=(1,0)$, set $u_1 =-1$ and $b_1=u_1\beta_0$ so that $b_1=-\sqrt{-3}$.
Then ${\rm Im}(\sigma_1(b_1)) < 0$.

For $\cf_2= (0,1)$, set $u_2=1$ and $b_2=u_2\beta_0$ so that $b_2=\sqrt{-3}$.
Then ${\rm Im}(\sigma_2(b_2)) < 0$.

The CM-type $\Phi_i$ is simple by Lemma~\ref{LFermatsimple} and $b_i$ 
satisfies Lemma~\ref{betauniquecyclo} with respect to $\Phi_i$.
\end{example}

For $m=3$, there are simple formulas relating the signature $\cf$ and the inertia type $a$.  
If $a$ has $d_1$ entries of $1$ and $d_2$ entries of $2$, then $\cf=(f_1,f_2)$ with 
$f_1 = (2d_1+d_2 -3)/3$ and $f_2 = (d_1+2d_2-3)/3$.
Then $d_1=2f_1-f_2+1$ and $d_2 = 2f_2 -f_1 +1$.
One can check that $0 \leq {\rm max}(f_1,f_2) \leq 2 {\rm min}(f_1,f_2) + 1$.

\begin{corollary} \label{Cshimdat3}
Let $m=3$ and $N \geq 4$.
Let $\gamma = (3,N,a)$ be a monodromy datum with signature $(f_1,f_2)$.
Let $g=f_1+f_2$.
Let $S_\gamma$ be the component of the Shimura variety containing $Z_\gamma$. 

Let $F=\QQ(\zeta_3)$ and $\xi = -1/\sqrt{-3}$.
Then the integral PEL datum of $S_\gamma$
has lattice $(\mathcal{O}_F)^g$ with Hermitian form $H_B$ where 
$B\in {\rm GL}_g(\mathcal{O}_F)$ is diagonal with $f_1$ entries of $\xi$ and 
$f_2$ entries of $-\xi$.
\end{corollary}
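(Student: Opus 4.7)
The plan is to apply Theorem~\ref{PELdatumbeta} directly to the monodromy datum $\gamma = (3, N, a)$. First, I would invoke Proposition~\ref{Pdegenerate} to produce a distinguished point $P \in Z_\gamma$ whose associated admissible cover degenerates to $r = N - 2$ components $C_1, \ldots, C_r$. Since $m = 3$, each component $C_j$ has genus $(m-1)/2 = 1$, so $A_j = {\rm Jac}(C_j)$ is an elliptic curve with CM by $\mathcal{O}_F = \ZZ[\zeta_3]$. A quick check using \eqref{Egenus} (noting $\gcd(a(i), 3) = 1$ for every $i$) confirms $g = N - 2 = r$, consistent with the claim $g = f_1 + f_2$.

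Next I would identify the CM-type of each $A_j$. Since $\dim A_j = 1$, the signature function $\cf_j$ takes values in $\{0, 1\}$ and satisfies $\cf_j(\tau_1) + \cf_j(\tau_2) = 1$, so $\cf_j$ is either $(1, 0)$ or $(0, 1)$; by \eqref{CMtype_sign} the CM-type $\Phi_j$ is accordingly $\Phi_1$ or $\Phi_2$. The key bookkeeping step is that the total signature of ${\rm Jac}(C_P) \simeq \bigoplus_j A_j$, which is an invariant of $S_\gamma$, equals the generic signature $(f_1, f_2)$; this forces exactly $f_1$ components of type $\Phi_1$ and $f_2$ components of type $\Phi_2$.

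Then I would read off the $\xi_j$ from Example~\ref{Euandb3}. When $\Phi_j = \Phi_1$, we have $\beta_j = -\sqrt{-3}$ so $\xi_j = -1/\sqrt{-3} = \xi$; when $\Phi_j = \Phi_2$, we have $\beta_j = \sqrt{-3}$ so $\xi_j = 1/\sqrt{-3} = -\xi$. Substituting into Theorem~\ref{PELdatumbeta} immediately yields the integral PEL datum with $B = {\rm diag}[\xi_1, \ldots, \xi_r] \in {\rm GL}_g(\mathcal{O}_F)$ having $f_1$ diagonal entries equal to $\xi$ and $f_2$ diagonal entries equal to $-\xi$.

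The hypotheses of Theorem~\ref{PELdatumbeta} are easily verified: $m = 3$ is an odd prime, $F = \QQ(\zeta_3)$ has class number $1$, and $F_0 = \QQ$ trivially has units of independent signs. Moreover, by Lemma~\ref{LFermatsimple} each $\Phi_j$ is automatically simple, so Corollary~\ref{CMuniqueCyclo}(1) supplies the required uniqueness of principal polarization on each $A_j$ without any appeal to \cite[Appendix, Proposition~8]{shimuratranscend}. There is essentially no obstacle beyond the signature bookkeeping in the second step.
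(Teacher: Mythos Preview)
Your proposal is correct and follows essentially the same approach as the paper: invoke Proposition~\ref{Pdegenerate} to obtain the distinguished point, observe that each of the $r=N-2=g$ components has signature $(1,0)$ or $(0,1)$, use additivity of the signature to count $f_1$ of the former and $f_2$ of the latter, and apply Theorem~\ref{PELdatumbeta} together with Example~\ref{Euandb3}. Your write-up is somewhat more explicit than the paper's (e.g., spelling out the signature-additivity step and the verification of the hypotheses), but the logic is the same.
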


\begin{proof}
Consider the admissible $\mu_3$-cover $\varphi: {\mathcal C}_P \to T$ represented by the distinguished point from Proposition~\ref{Pdegenerate}.  Here $T$ is a tree of $r=f_1+f_2$ projective lines.
Above $f_1$ (resp.\ $f_2$) components of $T$, the restriction of $\varphi$ is a $\mu_3$-cover 
branched at $3$ points with signature $\cf_1=(1,0)$ (resp.\ $\cf_2=(0,1)$).
The result then follows from Theorem~\ref{PELdatumbeta}.
\end{proof}

In particular, Corollary~\ref{Cshimdat3} includes the three special families 
M[3], M[6], M[10].

\[\begin{array} {|l|l|l|l|}
\hline 
M & (m,N,a) & B \\ \hline
M[3] & (3, 4, (1,1,2,2)) &  {\rm diag} [\xi,-\xi] \\ \hline
M[6] & (3, 5, (1,1,1,1,2)) & {\rm diag} [\xi, \xi, -\xi] \\ \hline
M[10] & (3, 6, (1,1,1,1,1,1))& {\rm diag} [\xi,\xi,\xi,-\xi] \\ \hline
\end{array}.\]

The cases $M[6]: \xi[1,1,-1]$ and $M[10]: \xi[1,1,1,-1]$ match the table on \cite[page 1]{shimuratranscend}.

\subsection{Integral PEL datum when $m=5$} \label{Ssome5}

We illustrate Theorem~\ref{PELdatumbeta} for several well-chosen examples when $m=5$, 
including all families branched at $N=4$ points up to equivalence and one family branched at $N=5$ points.
This includes the two special Moonen families when $m=5$, namely
$M[11]$ and $M[16]$.
A similar result can be obtained for any monodromy datum $\gamma$ with $N>4$ by an inductive process. 

 \begin{example} \label{Ebeta5} The following table summarizes the data for CM-types when $m=5$.
 \[\begin{array}{|c|c|c|c|}
 \hline
 a & \cf & \Phi & \beta \\ \hline
 (4,3,3) & (0,1,0,1) & \{2,4\} & \beta_1=5/(\zeta_5^3 - \zeta_5^2) \\ \hline
 (3,1,1) & (1,1,0,0) & \{1,2\} & \beta_2 = 5/(\zeta_5-\zeta_5^4) \\ \hline
  (1,2,2) & (1,0,1,0) & \{1,3\} & \beta_3 = -\beta_1 \\ \hline
  (2,4,4) & (0,0,1,1) & \{3,4\} & \beta_4 = - \beta_2 \\ \hline
 \end{array}.\]
 
In the $i$th line of the table, 
the CM-type $\Phi_i$ is simple by Lemma~\ref{LFermatsimple} and $\beta_i$ 
satisfies Corollary~\ref{betauniquecyclo} with respect to $\Phi_i$.
The automorphism $\sigma_3$ permutes the rows via the cycle $(1,2,3,4)$ and its inverse $\sigma_2$ permutes the $4$th column via $\beta_1 \to \beta_2 \to - \beta_1 \to - \beta_2 \to \beta_1$. 
 \end{example}
 
\begin{proof}
When $m=5$, then $\beta_0 = 5/(\zeta_5^3-\zeta_5^2)$ by Lemma~\ref{beta_lemma}.  
For $\cf_1= (0,1,0,1)$, set $u_1=1$ and $\beta_1 = u_1 \beta_0$.
We compute that ${\rm Im}(\sigma_j(\beta_1)) < 0$ for $j=2,4$.
For $\cf_2=(1,1,0,0)$, set $u_2 =  (\zeta^3_5-\zeta_5^2)/(\zeta_5-\zeta_5^4)$ 
and $\beta_2=u_2\beta_0$.
We compute that ${\rm Im}(\sigma_j(\beta_2)) < 0$ for $j=1,2$.
The signature $\cf_3=(1,0,1,0)$ (resp.\ $\cf_4=(0,0,1,1)$) is the complex conjugate of $\cf_1$ 
(resp.\ $\cf_2$), which negates the value of $\beta$.
\end{proof}

\begin{corollary} \label{Cm5}
Let $m=5$ and $F=\QQ(\zeta_5)$. 
Every family of $\mu_5$-covers of ${\mathbb P}^1$ with $N=4$ 
is equivalent to either (i), (ii) or $M[11]$ in the table below.
Recall $\beta_1, \beta_2 \in 
{\mathcal{O}}_F$ from Example~\ref{Ebeta5} and let $\xi_i=\beta_i^{-1}$.
For the monodromy data $\gamma=(5,N,a)$ and $r=N-2$ as below, 
the integral PEL datum of $S_\gamma$
has lattice $(\mathcal{O}_F)^r$ with Hermitian form $H_B$
where $B\in {\rm GL}_r(\mathcal{O}_F)$ is as below.
\[\begin{array} {|l|l|l|l|}
\hline 
M & (5,N,a) & B \\ \hline
(i) & (5,4, (1,1,4,4)) & {\rm diag} [\xi_2,-\xi_2] \\ \hline
(ii) & (5, 4, (1,2,3,4)) &  {\rm diag} [-\xi_1,\xi_1] \\ \hline
M[11] & (5,4, (1,3,3,3)) &  {\rm diag} [\xi_1,\xi_2] \\ \hline
M[16] & (5,5,(2,2,2,2,2)) & {\rm diag} [-\xi_1,-\xi_2,-\xi_1] \\ \hline
\end{array}.\]
\end{corollary}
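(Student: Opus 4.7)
The proof is a direct application of Theorem~\ref{PELdatumbeta} once we classify the equivalence classes of monodromy data with $m=5$, $N=4$, and identify the CM-types of the factors at a distinguished point. First I will verify the enumeration. The multisets $\{a(1),\ldots,a(4)\} \subset \{1,2,3,4\}$ with entries summing to $0 \bmod 5$ are $\{1,1,1,2\}$, $\{1,1,4,4\}$, $\{1,2,3,4\}$, $\{1,3,3,3\}$, $\{2,2,2,4\}$, $\{2,2,3,3\}$, and $\{3,4,4,4\}$. Under the action of ${\rm Aut}(\mu_5) \cong (\ZZ/5\ZZ)^*$ from Lemma~\ref{Lchangeembed}, these partition into exactly three orbits with representatives $\{1,1,4,4\}$, $\{1,2,3,4\}$, and $\{1,3,3,3\}$, matching cases (i), (ii), and $M[11]$ respectively.

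For each representative, I apply the degeneration of Proposition~\ref{Pdegenerate} by collapsing the first two branch points, which is permissible since $a(1)+a(2) \not\equiv 0 \bmod 5$ in each case. This yields a distinguished point whose Jacobian splits as a product of $r = N-2$ principally polarized abelian varieties with complex multiplication by ${\mathcal O}_F$, indexed by the trinodes of the resulting tree. Specifically, for case (i) the two trinodes have inertia types $(1,1,3)$ and $(2,4,4)$; for case (ii) they are $(1,2,2)$ and $(3,3,4)$; for $M[11]$ they are $(1,3,1)$ and $(4,3,3)$; and for $M[16]$, iterating the procedure produces three trinodes of types $(2,2,1)$, $(4,2,4)$, and $(1,2,2)$.

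For each trinode I compute the signature $\cf$ via \eqref{DMeqn}; in every case the signature lies in the list $\{(0,1,0,1), (1,1,0,0), (1,0,1,0), (0,0,1,1)\}$ appearing in Example~\ref{Ebeta5}. From that example I then read off the corresponding element $\beta_j$ satisfying the three conditions of Corollary~\ref{beta}, and set $\xi_j = \beta_j^{-1}$. Theorem~\ref{PELdatumbeta} then assembles the integral PEL datum with lattice $({\mathcal O}_F)^r$ and Hermitian form $H_B$ where $B = {\rm diag}[\xi_{j_1},\ldots,\xi_{j_r}]$, reproducing each row of the table; for example, the trinode $(1,1,3)$ has signature $(1,1,0,0)$ and contributes $\xi_2$, while $(2,4,4)$ has signature $(0,0,1,1)$ and contributes $-\xi_2$, yielding $B = {\rm diag}[\xi_2,-\xi_2]$ in case (i).

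The argument is entirely computational; the main pitfall is bookkeeping between the signatures of each trinode and the indexing of $\beta_j$ in Example~\ref{Ebeta5}, in particular tracking the sign flips encoded by $\beta_3 = -\beta_1$ and $\beta_4 = -\beta_2$. A different choice of which pair of branch points to collapse first produces a different ordering of the trinodes, but this only permutes the diagonal entries of $B$ and does not affect the equivalence class of the Hermitian form.
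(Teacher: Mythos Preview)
Your proposal is correct and follows essentially the same approach as the paper: classify the $N=4$ inertia types up to the ${\rm Aut}(\mu_5)$-action, produce a distinguished point via the degeneration of Proposition~\ref{Pdegenerate}, read off the CM-types of the factors from Example~\ref{Ebeta5}, and assemble $B$ via Theorem~\ref{PELdatumbeta}. The only cosmetic difference is that the paper classifies the three orbits by the pattern of repeated entries in $a$ (three equal, two equal, all distinct) rather than by your explicit enumeration of the seven multisets, and it cites \cite[Remark~5.2, Lemma~6.4]{LMPT2} for the degenerations where you invoke Proposition~\ref{Pdegenerate} directly; the resulting trinode data and diagonal matrices coincide.
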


\begin{proof}
Suppose $m=5$ and $N=4$ and let $a$ be the inertia type of $\gamma$.  
If three of the values of $a$ are the same, the family is equivalent to the one with $a=(1,3,3,3)$, 
which is $M[11]$.  
If two of the values of $a$ are the same, the family is equivalent to (i).
If all values of $a$ are distinct, the family is equivalent to (ii).

By Theorem~\ref{PELdatumbeta}, it suffices to find the CM-type $(F, \Phi_i)$ for the abelian varieties $A_i$ 
in the decomposition of the Jacobian of ${\mathcal C}_P$.
We refer to \cite[Remark~5.2, Lemma~6.4]{LMPT2} for information about the 
admissible degeneration, given in short-hand by: 
(i) $(1,1,3) + (2,4,4)$; (ii) $(1,2,2) + (3,3,4)$; $M[11]$ $(1,3,1) + (4,3,3)$; and
$M[16]$ $(2,2,1) + (4,2,4) + (1,2,2)$.
Using the table in Example~\ref{Ebeta5}, we find the entries of the diagonal of $B$.
\end{proof}

\begin{remark} \label{Requivdata}
The family (ii), with monodromy datum $\gamma=(5,4,(1,2,3,4))$, has a second degeneration of the form
$(1,3,1) + (4,2,4)$, whose Hermitian form has matrix
$B'={\rm diag} [\xi_2,-\xi_2]$. 
In this case, there are down-to-earth ways to explain why the Hermitian forms determined by 
$B'$ and $B={\rm diag} [-\xi_1,\xi_1]$ are isomorphic.

First, by Lemma~\ref{Lchangeembed}, the automorphism $\sigma_2$ takes the inertia types in the first degeneration to 
those in the second by multiplying the entries by $3$.
By Corollary~\ref{betauniquecyclo}(1), the action on the entries of $B$ is via $\sigma_{2^{-1}} = \sigma_3$ and
\[\sigma_3(B)={\rm diag} [\sigma_3(-\xi_1),\sigma_3(\xi_1)] = {\rm diag}[\xi_2, -\xi_2] =B'.\] 
Second, for family (ii), $S_\gamma$ has signature type $(1,1,1,1)$; hence its reflex field is $\QQ$ (which is smaller than $F_0\subset \RR$). The matrices $B$ and $B'$ are conjugate under the action of $\sigma_3\in{\rm Gal}(F_0/\QQ)$ and correspond to the two choices of a $\QQ$-linear embedding  $F_0\hookrightarrow \RR$. 
\end{remark}

\begin{remark} \label{RcompareS}
To compare with Shimura's work, write $w=\zeta_5 + \zeta_5^4$.  Then $w^2+w-1=0$.
So $w = (-1 + \sqrt{5})/2$.  
Then $\xi_2/\xi_1 = - w - 1 = -(1+\sqrt{5})/2$ so
$\xi_1/\xi_2 = (1-\sqrt{5})/2$.

Consider the family $\gamma'=(5, 4, (1,1,1,1,1))$.
A careful look at \cite[Section 5]{shimuratranscend} shows that Shimura replaced $\zeta_5$ by $\zeta_5^3$
in his computation for this family.  This has the effect of switching to the family $M[16]$ with $\gamma=(5, 4, (2,2,2,2,2))$;
indeed, Shimura computes that the signature is $(2,0,3,1)$.
By line 4 of the table in Corollary~\ref{Cm5}, 
the family $\gamma$ has $B=-\xi_1[1, 1, \xi_2/\xi_1] = -\xi_1[1,1,-(1+\sqrt{5})/2]$.
This does not exactly match what is written in line 5 of the table on \cite[page 1]{shimuratranscend},
namely $[1,1, \xi_1/\xi_2]=[1,1,(1-\sqrt{5})/2]$, but it has the
same sign signature and thus yields an isomorphic Hermitian form.


Consider the family $\gamma'=(5, 4, (2,1,1,1))$.
The details for this family are not included in \cite{shimuratranscend} 
but it appears that Shimura replaced $\zeta_5$ by $\zeta_5^3$
in his computation for this family also.  
This has the effect of switching to the family $M[11]$ with $\gamma=(5, 4, (1,3,3,3))$.
By line 3 of the table in Corollary~\ref{Cm5}, the family $\gamma$ has
$B= \xi_2[1, \xi_1/\xi_2] = \xi_2[1, (1-\sqrt{5})/2]$.
This matches what is written in line 4 of the table on \cite[page 1]{shimuratranscend}. 

\end{remark}

\subsection{Integral PEL datum when $m=7$} \label{Sm7}

We illustrate Theorem~\ref{PELdatumbeta} for several well-chosen examples when $m=7$, 
including all families branched at $N=4$ points up to equivalence.
This includes the special Moonen family $M[17]$.
A similar result can be obtained for any monodromy datum $\gamma$ with $N>4$ by an inductive process. 

 \begin{example} \label{Ebeta7} The following table summarizes the cases when $m=7$.
 \[\begin{array}{|c|c|c|c|}
 \hline
 a & \cf & \Phi & \beta \\ \hline
  (1,1,5) & (1,1,1,0,0,0) & \{1,2,3\} & \beta_1 = 7/(\zeta_7 - \zeta_7^6)  \\ \hline
 (3,3,1) & (1,0,1,0,1,0) & \{1,3,5\} & \beta_2 =  7/(\zeta_7^3 - \zeta_7^4) \\ \hline
  (2,2,3) & (1,0,0,1,1,0) & \{1,4,5\} & \beta_3 = 7/(\zeta^2_7 - \zeta_7^5)  \\ \hline
   (6,6,2) & (0,0,0,1,1,1) & \{4,5,6\} & \beta_4 = - \beta_1 \\ \hline
(4,4,6)  & (0,1,0,1,0,1) & \{2,4,6\} & \beta_5= -\beta_2 \\ \hline
 (5,5,4) & (0,1,1,0,0,1) & \{2,3,6\} & \beta_6 = -\beta_3 \\ \hline

 (1,2,4) & (1,1,0,1,0,0) & \{1, 2, 4\} & \beta=-\frac{7(\zeta^3_7 - \zeta_7^4) }{(\zeta_7 - \zeta_7^6) (\zeta^2_7 - \zeta_7^5) }  \\ \hline

 (3,1,5) & (0,0,1,0,1,1) & \{3, 5,6\} & \beta'=-\beta  \\ \hline
 \end{array}.\]
\end{example} 

\begin{lemma}
In the $i$th line of the table in Example~\ref{Ebeta7}, for $1\leq i\leq 6$,
the CM-type $\Phi_i$ is simple, and the element $\beta_i$ 
satisfies Corollary~\ref{betauniquecyclo} with respect to $\Phi_i$.
The generator $\sigma_3$ of the Galois group ${\rm Gal}(\QQ(\zeta_7)/\QQ)$ permutes these via $\sigma^{i-1}_3(\beta_1)= \beta_{i}$, for $i=1,\dots, 6$.

In the last two lines of the table,  the element $\beta$ (resp. $\beta'$)
satisfies Corollary~\ref{betauniquecyclo} with respect to the CM-type $\Phi=\{1,2,4\}$ (resp. $\Phi'=\{3,5,6\}$). The CM types $\Phi$,  $\Phi'$ are not simple. 
\end{lemma}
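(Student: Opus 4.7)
The plan is to verify the claimed properties row by row, separating rows $1$--$6$ (simple CM-type, diagonal generator) from rows $7$--$8$ (non-simple CM-type, non-diagonal generator).

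For rows $1$--$6$: each inertia type is a permutation of $(a,a,-2a)$ for some $a\in(\ZZ/7\ZZ)^{\times}$, which up to the natural action of $(\ZZ/7\ZZ)^{\times}$ normalizes to the multiset $\{1,1,5\}$. Since this is not of the form $\{1,x,x^{2}\}$ for any order-three element, Lemma~\ref{Lnotsimple} gives simplicity of each $\Phi_{i}$. Each $\beta_{i}$ equals $\pm 7/(\zeta_{7}^{k_{i}}-\zeta_{7}^{-k_{i}})$ for some $k_{i}\in\{1,2,3\}$; the quotient $\beta_{i}/\beta_{0}$ is a totally real cyclotomic unit, because the factor $\zeta_{7}^{k}-\zeta_{7}^{-k}=\zeta_{7}^{-k}(\zeta_{7}^{2k}-1)$ is a unit multiple of the uniformizer $\zeta_{7}-1$ of the unique prime $\mathfrak{p}\subset\mathcal{O}_{F}$ above $7$ and is purely imaginary under every embedding. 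Hence $\beta_{i}=u_{0,i}\beta_{0}$ for some $u_{0,i}\in\mathcal{U}_{F_{0}}$, and $\beta_{i}$ inherits from $\beta_{0}$ conditions (1) and (2) of Corollary~\ref{beta}. Condition (3) is then a trigonometric sign check: $\sigma_{j}(\beta_{i})=\mp 7i/(2\sin(2\pi j k_{i}/7))$, and for each $i$ one tabulates that $\sin(2\pi j k_{i}/7)$ has the required sign precisely for $j$ in the stated $\Phi_{i}$.

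The Galois-equivariance statement $\sigma_{3}^{i-1}(\beta_{1})=\beta_{i}$ reduces to a direct computation, since $\sigma_{3}(\zeta_{7})=\zeta_{7}^{3}$ and $3^{i-1}\pmod 7$ cycles through $1,3,2,6,4,5$, matching the denominators of $\beta_{1},\beta_{2},\ldots,\beta_{6}$ up to the sign from $\zeta_{7}^{-k}-\zeta_{7}^{k}=-(\zeta_{7}^{k}-\zeta_{7}^{-k})$. Equivalently, this is Corollary~\ref{betauniquecyclo}(1) applied to the identity $3^{i-1}\cdot\Phi_{1}=\Phi_{i}$ in $(\ZZ/7\ZZ)^{\times}$.

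For rows $7$--$8$: the non-simplicity of $\Phi$ is immediate from Lemma~\ref{Lnotsimple}, since the inertia type $(1,2,4)$ is of the form $(1,x,x^{2})$ with $x=2$ having order $3$ in $(\ZZ/7\ZZ)^{\times}$; complex conjugation handles $\Phi'$. To show that $\beta$ satisfies Corollary~\ref{betauniquecyclo} for $\Phi$, one checks (1) and (2) directly: each factor $\zeta_{7}^{k}-\zeta_{7}^{-k}$ generates $\mathfrak{p}$ and is purely imaginary, so $(\beta)=\mathfrak{p}^{6+1-1-1}=\mathfrak{p}^{5}=D_{F/\QQ}$ and $\beta=-\overline{\beta}$. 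For (3), one simplifies to $\beta=u\cdot\beta_{1}$ for the explicit totally real unit $u=\zeta_{7}^{2}+\zeta_{7}^{-2}$ (using $\zeta_{7}^{4}=\zeta_{7}^{-3}$, so that $-(\zeta_{7}^{3}-\zeta_{7}^{-3})/(\zeta_{7}^{2}-\zeta_{7}^{-2})$ simplifies to $\zeta_{7}^{2}+\zeta_{7}^{-2}$), and checks that multiplication by $u$ flips the sign of $\operatorname{Im}\sigma_{j}(\beta_{1})$ exactly on the embeddings required for $\Phi$. The main obstacle is precisely this final sign-check: one must compute the sign pattern of $u$ at the three real embeddings of $F_{0}$ (a direct cosine computation, together with $N_{F_{0}/\QQ}(u)=1$ to confirm $u\in\mathcal{U}_{F_{0}}$) and combine it with the sign pattern of $\operatorname{Im}\sigma_{j}(\beta_{1})$ to arrive at the CM-type $\Phi$.
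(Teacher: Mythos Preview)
Your approach is essentially the same as the paper's: both verify $\beta_1$ directly, propagate to $\beta_2,\ldots,\beta_6$ via Galois equivariance (Corollary~\ref{betauniquecyclo}(1)), invoke Lemma~\ref{Lnotsimple} for simplicity, and handle the last two rows by writing $\beta=u\beta_0$ for an explicit real unit $u$. Two small points are worth correcting.

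First, the identity you cite for the Galois-equivariance step is off by an inverse. You write $3^{i-1}\cdot\Phi_1=\Phi_i$, but in fact $3\cdot\Phi_1=\{3,6,2\}\neq\Phi_2=\{1,3,5\}$; the correct identity is $5^{i-1}\cdot\Phi_1=\Phi_i$, and then Corollary~\ref{betauniquecyclo}(1) gives $\beta_i=\sigma_{(5^{i-1})^{-1}}(\beta_1)=\sigma_3^{i-1}(\beta_1)$, matching your (correct) direct computation.

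Second, your ``one checks'' step for row 7 does \emph{not} check out with the $\beta$ as printed in the table. Your simplification $\beta=(\zeta_7^2+\zeta_7^{-2})\,\beta_1$ is correct, but then $\sigma_1(\zeta_7^2+\zeta_7^{-2})=2\cos(4\pi/7)<0$ and ${\rm Im}\,\sigma_1(\beta_1)<0$, so ${\rm Im}\,\sigma_1(\beta)>0$; carrying out the full sign table gives ${\rm Im}\,\sigma_j(\beta)<0$ precisely for $j\in\{3,5,6\}$, not $\{1,2,4\}$. The issue is a sign slip in the paper itself: the proof defines $u=-\dfrac{(\zeta_7^4-\zeta_7^3)^2}{(\zeta_7-\zeta_7^6)(\zeta_7^2-\zeta_7^5)}$, and then $u\beta_0=+\dfrac{7(\zeta_7^3-\zeta_7^4)}{(\zeta_7-\zeta_7^6)(\zeta_7^2-\zeta_7^5)}$, not the negative displayed in the table. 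With the corrected sign (equivalently, with $u=-(\zeta_7^2+\zeta_7^{-2})$ in your formulation), the sign pattern does yield $\Phi=\{1,2,4\}$, and your argument goes through.
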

 
\begin{proof}
When $m=7$, then $\beta_0 = 7/(\zeta_7^4-\zeta_7^3)$ by Lemma~\ref{beta_lemma}.  

For $a_1=(1,1,5)$ and $\cf_1= (1,1,1,0,0,0)$, set $u_1=(\zeta_7^4-\zeta_7^3)/(\zeta_7-\zeta_7^6)$ and $\beta_1 = u_1 \beta_0$.
We compute that ${\rm Im}(\sigma_j(\beta_1)) < 0$ for $j=1,2,3$.

Let $\sigma=\sigma_5$, which is a generator of ${\rm Gal}(\QQ(\zeta_7)/\QQ)$ and has inverse $\sigma_3$.
Then, by Lemmas~\ref{Lchangeembed} and \ref{Lchangeembed2}, for $1\leq i\leq 6$, 
the action of $\sigma_5^{i-1}$ changes the inertia type to $a_{i}:=(3^{i-1}) \cdot a_1$ and 
the CM-type to $\Phi_{i}:= 5^{i-1} \cdot \Phi$, which also determines the signature $\cf_i$. 
Also, the element $\beta_{i}=\sigma_3^{i-1}(\beta_1)$ 
satisfies Corollary~\ref{betauniquecyclo} with respect to $\Phi_i$.

For $a=(1,2,4)$ and $\cf=(1,1,0,1,0,0)$, set 
\[u=-\frac{(\zeta_7^4-\zeta_7^3)^2}{(\zeta_7-\zeta_7^6)(\zeta^2_7-\zeta_7^5)}, \ {\rm and} \ 
\beta =u \beta_0 = -\frac{7(\zeta_7^3-\zeta_7^4)}{(\zeta_7 -\zeta_7^6)(\zeta_7^2-\zeta_7^5)}.\]
Then ${\rm Im}(\sigma_j(\beta_2)) < 0$ for $j=1,2,4$.
The last line follows from Lemma~\ref{Lchangeembed} by applying complex conjugation to the previous line. 

By Lemma~\ref{Lnotsimple}, $\Phi_i$ is simple unless it is $\{1,2,4\}$ or $\{3,5,6\}$.
 \end{proof}

For $m=7$ and $N=4$, every family is equivalent to one in the next result.

\begin{corollary} \label{Cm7}
Let $m=7$ and $F=\QQ(\zeta_7)$. 
Recall $\beta_1, \beta_2,\beta_3, \beta \in 
{\mathcal{O}}_F$ from Example~\ref{Ebeta7}. 
Let $\xi_i=\beta_i^{-1}$ for $i=1,2,3$ and $\xi=\beta^{-1}$.
For the monodromy data $\gamma=(7,4,a)$ as below, 
the integral PEL datum of $S_\gamma$
has lattice $(\mathcal{O}_F)^2$ with Hermitian form $H_B$
where $B\in {\rm GL}_2(\mathcal{O}_F)$ is as below.
\[\begin{array} {|l|l|l|l|}
\hline 
M & a & B \\ \hline
(i) & (1,1,2,3) & {\rm diag} [\xi_1,\xi_3] \\ \hline
(ii) &  (1,1,6,6) &  {\rm diag} [\xi_1,-\xi_1] \\ \hline
(iii)& (1,2,5,6) & {\rm diag} [\xi_1,-\xi_1] \\ \hline
M[17] &  (2,4,4,4) &  {\rm diag} [- \xi_2, \xi]  \\ \hline
\end{array}.\]
\end{corollary}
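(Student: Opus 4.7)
The plan is to apply Theorem~\ref{PELdatumbeta} in each of the four cases, and so the central task is to exhibit, for each monodromy datum $\gamma=(7,4,a)$, an admissible degeneration at a distinguished point $P \in Z_\gamma$ of the shape predicted by Proposition~\ref{Pdegenerate}. Since $N-2=2$, such a point corresponds to an admissible $\mu_7$-cover $\psi_P : C_P \to T$ where $T$ is a chain of two projective lines; collapsing a pair of branch points whose inertia types have nonzero sum modulo $7$ gives a splitting of $\gamma$ into two $3$-point monodromy data $\gamma_1, \gamma_2$, and the decomposition $\mathrm{Jac}(C_P) \simeq A_1 \oplus A_2$ with each $A_j$ a CM abelian variety of dimension $3$ by $\mathcal{O}_F$.

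Concretely, for (i) I will collapse the two branch points of inertia $1$, producing $\gamma_1 = (7,3,(1,1,5))$ and $\gamma_2 = (7,3,(2,2,3))$; for (ii), the same collapse gives $\gamma_1 = (7,3,(1,1,5))$ and $\gamma_2 = (7,3,(2,6,6))$; for (iii) I collapse the branch points of inertia $1$ and $5$ (sum $6 \not\equiv 0$) to obtain the same pair as in (ii); for $M[17]$ I collapse two branch points of inertia $4$ (sum $1 \not\equiv 0$) to get $\gamma_1 = (7,3,(4,4,6))$ and $\gamma_2 = (7,3,(1,2,4))$. For each $\gamma_j$, the CM-type $\Phi_j$ of $A_j$ is read off from \eqref{DMeqn} and \eqref{CMtype_sign}, and the corresponding element $\beta_j$ (and hence $\xi_j = \beta_j^{-1}$) is taken from the table in Example~\ref{Ebeta7}. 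Theorem~\ref{PELdatumbeta} then identifies the integral PEL datum of $S_\gamma$ with lattice $\mathcal{O}_F^{\,2}$ and Hermitian form $H_B$ for $B = \mathrm{diag}[\xi_1, \xi_2]$ from the two components; this yields exactly the matrices listed.

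The main obstacle occurs for $M[17]$: by Lemma~\ref{Lnotsimple}, the CM-type $\{1,2,4\}$ attached to the inertia type $(1,2,4)$ is not simple, and one checks that every pairing of two branch points in $a=(2,4,4,4)$ forces this non-simple type to appear on one of the two components. Hence Corollary~\ref{CMuniqueCyclo}(1) does not by itself guarantee that the principally polarized CM abelian variety $A_2$ is determined up to isomorphism by $({\mathcal O}_F,\Phi_2)$. To close this gap I will invoke \cite[Appendix, Proposition~8]{shimuratranscend}, exactly as in the proof of Theorem~\ref{PELdatumbeta}, which provides the required uniqueness of the integral Hermitian form of prescribed signature on the lattice $\Lambda$. (As noted in Remark~\ref{RgoodM}, this step could alternatively be bypassed by adopting the strategy of Section~\ref{Sanother7}, which chooses a different type of distinguished point in $Z_\gamma$ to avoid producing non-simple CM factors.)
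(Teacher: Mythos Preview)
Your proposal is correct and follows essentially the same route as the paper: apply Theorem~\ref{PELdatumbeta}, exhibit the admissible degeneration of each family into two $3$-point $\mu_7$-covers, and read off the $\beta$'s (hence the $\xi$'s) from the table in Example~\ref{Ebeta7}. The paper records the same degenerations (citing \cite[Remark~5.2, Lemma~6.4]{LMPT2}) as $(1,1,5)+(2,2,3)$, $(1,1,5)+(2,6,6)$, $(1,5,1)+(6,2,6)$, and $(4,4,6)+(1,4,2)$, and likewise notes in a remark that the non-simple CM-type for $M[17]$ is handled by \cite[Appendix, Proposition~8]{shimuratranscend} (already built into Theorem~\ref{PELdatumbeta}) or alternatively by Section~\ref{Sanother7}.
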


\begin{proof}
We refer to \cite[Remark~5.2, Lemma~6.4]{LMPT2} for information about the 
admissible degeneration of each family, given in short-hand by:
(i) $(1,1,5) + (2,2,3)$; (ii) $(1,1,5) + (2,6,6)$; (iii) $(1,5,1) + (6,2,6)$;
and $M[17]$ $(4,4,6) + (1,4,2)$. 
For each family, 
consider the CM-types $(F, \Phi_1)$ and $(F, \Phi_2)$ for the 
abelian varieties $A_i$ 
in the decomposition of the Jacobian at the distinguished point arising from the degeneration. 
We compute the associated integral PEL data by Theorem~\ref{PELdatumbeta}, using the table in Example~\ref{Ebeta7}.
\end{proof}

\begin{remark}
In cases (i), (ii), and (iii), 
both $(F, \Phi_1)$ and $(F, \Phi_2)$ are simple.
For $M[17]$, the CM-type $(F, \Phi_2)$ is not simple, but this is not a concern by 
\cite[Appendix, Proposition 8]{shimuratranscend} or Section~\ref{Sanother7}.
For $M[17]$, we note that $B = \xi[1, - \xi_2/\xi]$ where $-\xi_2/\xi = \zeta_7+\zeta_7^6-1$.
In line 6 of the table on \cite[page 1]{shimuratranscend}, for the related family $\gamma=(7,4,(4,1,1,1))$,
Shimura writes the Hermitian form as $H_B$ with $B=\xi [1, - {\rm sin}(3\pi/7)/{\rm sin}(2\pi/7)]$. 
\end{remark}

\subsection{Another approach for $M[17]$} \label{Sanother7}

In this section, we compute the integral PEL datum for the family $M[17]$ using another approach.
This approach utilizes a different kind of distinguished point in the family, namely one that represents a 
curve with extra automorphisms.  In this section, we see that the Jacobian of this curve has complex multiplication by a larger field and that its CM-type is simple.

In fact, we proved that every positive-dimensional family of $\mu_7$-covers of ${\mathbb P}^1$ has a distinguished point representing a product of principally polarized abelian varieties, each of which has complex multiplication with a CM-type that is simple. In this way, one can avoid the use of \cite[Appendix, Proposition 8]{shimuratranscend} when $m=7$.  We omit the details of this.

We start by finding another distinguished point in the family $Z_\gamma$, 
under certain restrictive conditions on $\gamma$. 
Similarly to Notation~\ref{Ndefsigma}, for $0 \leq n \leq 3m$ with ${\rm gcd}(n,3m)=1$, 
let $\sigma_n$ be the embedding 
$\QQ(\zeta_{3m}) \hookrightarrow \CC$
determined by $\sigma_n(\zeta_{3m})=\zeta_{3m}^n$.

\begin{proposition} \label{Pdis3}
Let $\gamma = (m,4,a)$ where $m > 3$ is prime and $a = (1,1,1, m-3)$.
Then $Z_\gamma$ has a distinguished point $Q$.
More specifically:
in the family of $\mu_m$-covers with monodromy datum $\gamma$, 
there is a point which represents a $\mu_m$-cover $\psi: C_Q \to {\mathbb P}^1$, 
where $C_Q$ is a curve of genus $m-1$ having an automorphism of order $3$.

Furthermore, the Jacobian of $C_Q$ has complex multiplication by $(\ZZ[\zeta_{3m}],\Phi_Q)$,
where, for any $0<n<3m$ with $(n,3m)=1$, the embedding $\sigma_n$ is in $\Phi_Q$ if and only if 
\begin{align*}  n\in[0, 2m/3]\cup[m,5m/3]\cup [2m, 8m/3]&\text{ if } n\equiv 1\bmod 3, or \\
n\in[0, m/3]\cup[m,4m/3]\cup [2m, 7m/3] &\text{ if } n\equiv 2\bmod 3.  \end{align*} 
\end{proposition}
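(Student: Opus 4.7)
My plan is to exhibit the distinguished point $Q$ as the isomorphism class of a specific curve carrying an extra order-$3$ automorphism, and then to compute the CM-type by analyzing the action of $\mu_{3m}$ on holomorphic differentials.

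The construction is as follows. I will take $C_Q: y^m = x^3-1$ with its $\mu_m$-cover $\psi:C_Q\to{\mathbb P}^1$, $(x,y)\mapsto x$. Its branch locus is $\{1,\zeta_3,\zeta_3^2,\infty\}$; the inertia at each cube root of unity is $1$ (since $x^3-1$ has simple zeros there), and the balance relation $1+1+1+a_4\equiv 0\pmod m$ forces $a_4=m-3$. Thus $\psi$ has monodromy datum $\gamma$, so $C_Q$ represents a point $Q\in Z_\gamma$, and by \eqref{Egenus} its genus is $m-1$. The map $\rho_3:(x,y)\mapsto(\zeta_3 x, y)$ is an automorphism of $C_Q$ of order $3$, commuting with the $\mu_m$-action $\rho_m:(x,y)\mapsto(x,\zeta_m y)$. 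Since $\gcd(3,m)=1$, they generate a cyclic subgroup of order $3m$, yielding an embedding $\mu_{3m}\hookrightarrow{\rm Aut}(C_Q)\hookrightarrow{\rm End}({\rm Jac}(C_Q))\otimes\QQ$. As $\dim{\rm Jac}(C_Q)=m-1=\phi(3m)/2$, this realizes CM by $\ZZ[\zeta_{3m}]$.

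To compute $\Phi_Q$, I will analyze $H^0(C_Q,\Omega^1)$. A standard superelliptic calculation identifies the basis
\[\{\omega_{i,j}=x^i\,dx/y^j:\ i\geq 0,\ 1\leq j\leq m-1,\ 3j\geq m(i+1)+1\},\]
where $j\leq m-1$ controls holomorphy at the finite branch points, $i\geq 0$ at $x=0$, and $3j\geq m(i+1)+1$ at the unique point above infinity; a direct count confirms the size is $m-1$. On this basis, $\rho_m^*\omega_{i,j}=\zeta_m^{-j}\omega_{i,j}$ and $\rho_3^*\omega_{i,j}=\zeta_3^{i+1}\omega_{i,j}$. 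The delicate step is to identify $\mu_{3m}$ with $\langle\rho_3,\rho_m\rangle$ compatibly with Notation~\ref{Ndefsigma}: I will take the generator $\rho_{3m}=\rho_m^\alpha\rho_3^\beta$ where $3\alpha+m\beta=1$, which is the unique element satisfying $\rho_{3m}^m=\rho_3$ and $\rho_{3m}^3=\rho_m$ (matching $\zeta_{3m}^3=\zeta_m$ and $\zeta_{3m}^m=\zeta_3$ for $\zeta_{3m}=e^{2\pi I/(3m)}$). A short calculation then gives $\rho_{3m}^*\omega_{i,j}=\zeta_{3m}^n\omega_{i,j}$ with $n\equiv -3\alpha j+m\beta(i+1)\pmod{3m}$; reducing modulo $m$ and modulo $3$ yields $n\equiv m-j\pmod m$ and $n\equiv i+1\pmod 3$.

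Finally, I will translate the range conditions on $(i,j)$ into residue conditions on $n$. Setting $k:=n\bmod m\in\{1,\dots,m-1\}$ and $r:=n\bmod 3\in\{1,2\}$, the unique candidate pair giving $n$ is $(i,j)=(r-1,m-k)$, and it appears in the basis precisely when $3(m-k)\geq mr+1$, that is $k\leq\lfloor 2m/3\rfloor$ when $r=1$ and $k\leq\lfloor m/3\rfloor$ when $r=2$. Expressing these bounds on $k$ as bounds on $n\in(0,3m)$ via the three residue intervals $[0,m),[m,2m),[2m,3m)$ recovers exactly the intervals $[0,2m/3]\cup[m,5m/3]\cup[2m,8m/3]$ (if $r=1$) and $[0,m/3]\cup[m,4m/3]\cup[2m,7m/3]$ (if $r=2$) in the proposition. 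The principal obstacle will be the careful bookkeeping in the identification of $\mu_{3m}$: different choices of generator permute the labels $\sigma_n$ and give CM-types related by complex conjugation, so the choice must be pinned down coherently with the convention of Notation~\ref{Ndefsigma}.
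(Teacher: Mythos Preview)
Your proof is correct, and the construction of $C_Q:y^m=x^3-1$ together with the CM argument is exactly the paper's. Where you diverge is in the computation of $\Phi_Q$: the paper regards the composite map $(x,y)\mapsto x^3$ as a $\mu_{3m}$-cover of $\bP^1$ branched at $\{0,1,\infty\}$, determines its inertia type $(m,3,2m-3)$ (the choice $b_0=m$ rather than $2m$ corresponds precisely to your choice $\rho_{3m}^m=\rho_3$), and then reads off $\cf(\sigma_n)$ directly from the Deligne--Mostow formula~\eqref{DMeqn}. You instead write down the explicit superelliptic basis $\omega_{i,j}=x^i\,dx/y^j$ of $H^0(C_Q,\Omega^1)$ and diagonalize the $\mu_{3m}$-action on it. Both routes arrive at the same residue conditions on $n\bmod m$ and $n\bmod 3$; yours is more self-contained (it does not invoke \eqref{DMeqn} or \eqref{EdefCMtype} as a black box) and makes the eigenvector structure visible, while the paper's is shorter because the combinatorics of the holomorphy conditions at $\infty$ is already packaged into the signature formula. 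Your care in pinning down $\rho_{3m}$ via $3\alpha+m\beta=1$ is exactly the point the paper handles by the phrase ``possibly after replacing the order $3$ automorphism with its inverse'', and one can check the two normalizations agree by computing the canonical generator of inertia above $x^3=0$.
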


It is possible to generalize Proposition~\ref{Pdis3}, by replacing $3$ by an odd prime $\ell$ relatively prime to $m$ and letting $N=\ell+1$ and $a=(1,\ldots, 1, m-\ell)$.  We omit this generalization.  

\begin{proof} [Proof of Proposition~\ref{Pdis3}]
Let $C_Q$ be the smooth projective curve with equation $y^m=x^3-1$.
It admits a $\mu_m$-cover to ${\mathbb P}^1$ branched at $1, \zeta_3, \zeta_3^2, \infty$ with inertia type $a=(1,1,1,m-3)$.
The genus of $C_Q$ is $m-1$ by \eqref{Egenus}.
Also $C_Q$ has an automorphism $(x,y) \mapsto (\zeta_3 x, y)$ of order $3$.

The Jacobian $J_Q = {\rm Jac}(C_Q)$ is a principally polarized abelian variety having dimension $m-1$.  Let $F=\QQ(\zeta_m)$, and  $L=\QQ(\zeta_{3m})$. 
The field $L$ is a CM-field of degree $2\cdot\dim (J_Q)$. 
Then, the inclusion $\mathcal{O}_F\subset {\rm End}(J_Q)$ extends to an inclusion $\mathcal{O}_L=\mathcal{O}_F[\zeta_3]\subset {\rm End}(J_Q)$. Thus $J_Q$ has complex multiplication by 
$\mathcal{O}_L=\ZZ[\zeta_{3m}]$.

Consider the morphism $\phi:C_Q\to {\mathbb P}^1$, taking $(x,y)\mapsto x^3$. 
Then $\phi$ is a $\mu_{3m}$-cover of ${\mathbb P}^1$, branched at $0,1,\infty$.
We compute the CM-type of $J_Q$ by finding first the inertia type and then the signature type of $\phi$.
Let $b_0$ (resp.\ $b_1$, resp.\ $b_\infty$) denote the element of $\ZZ/3m\ZZ$ that determines the 
canonical generator of inertia of $\phi$ above $0$, (resp.\ $1$, resp.\ $\infty$). 

First, note that $b_1=3$.  This is because $\phi$ 
is branched at the $3$ points $x=1, \zeta_3, \zeta_3^2$ that lie above $x^3=1$; the canonical generator of 
inertia of $\phi$ at the points of $C_Q$ above these is, by definition, the automorphism identified with
$\zeta_7^1=\zeta_{21}^3$.

Second, without loss of generality, $b_0=m$. 
To see this, note there are $m$ points of $C_Q$ above $x^3=0$, so ${\rm gcd}(b_0, 3m)=m$;
this implies that $b_0 = 2m$ or $b_0=m$; 
possibly after replacing the order $3$ automorphism with its inverse, we can suppose that $b_0 = m$.  
Third, $b_\infty = 3m - b_0 - b_1 = 2m-3$, so $(b_0, b_1, b_\infty)=(m, 3, 2m-3)$. 

Next, we compute the signature $\cf$ of $\phi$ from its inertia type.
By \eqref{DMeqn}, if $0<n<3m$, and $(n,3m)=1$, then
\begin{equation*}
\cf(\sigma_n) = -1  + \langle -n/3 \rangle + \langle -n/m \rangle+\langle (-2n)/3+(n/m) \rangle .\end{equation*}
If  $n\equiv 1 \bmod 3$, then
\begin{align*}
\cf(\sigma_n) = &-1  +  (2/3)   +  \langle -n/m \rangle+\langle (-2/3)+(n/m) \rangle ;\end{align*}
if $n\equiv 2 \bmod 3$, then
\begin{align*}
\cf(\sigma_n) = &-1  +  (1/3)  +  \langle -n/m\rangle +  \langle (-1/3)+(n/m) \rangle.\end{align*}
We deduce that $\cf(\sigma_n)=1$ if $n\equiv 1\bmod 3$ and $n\in[0, 2m/3]\cup[m,5m/3]\cup [2m, 8m/3]$ or if $n\equiv 2\bmod 3$ and $n\in[0, m/3]\cup[m,4m/3]\cup [2m, 7m/3]$; otherwise $\cf(\sigma_n)=0$.

Finally, by (\ref{EdefCMtype}), the CM-type $\Phi_Q$ of $J_Q$ is as given in the statement. 
\end{proof}

For $m=7$, we verify that the CM-type $\Phi_Q$ of the Jacobian of $C_Q$ is simple.

\begin{lemma} \label{Lm7simple}
When $m=7$, then the CM-type $\Phi_Q$
in Proposition~\ref{Pdis3} is simple.
\end{lemma}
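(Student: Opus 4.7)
The plan is to argue by direct enumeration. Simplicity of $\Phi_Q$ is equivalent to showing that $\Phi_Q$ is not a union of cosets of any subgroup $H \leq \Gal(L/\QQ)$ for which the fixed field $L^H$ is a proper CM-subfield of $L = \QQ(\zeta_{21})$. Since $[L:\QQ] = 12$ and $\Gal(L/\QQ) \cong (\ZZ/21\ZZ)^\times \cong \ZZ/2\ZZ \times \ZZ/6\ZZ$, there are only finitely many subgroups to check, so the argument is purely combinatorial once $\Phi_Q$ has been listed explicitly.

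First, I would unpack the congruence conditions in Proposition~\ref{Pdis3} with $m=7$ to read off $\Phi_Q$. Running through the twelve elements of $(\ZZ/21\ZZ)^\times = \{1,2,4,5,8,10,11,13,16,17,19,20\}$, the conditions select exactly
\[\Phi_Q = \{\sigma_1, \sigma_2, \sigma_4, \sigma_8, \sigma_{10}, \sigma_{16}\},\]
which has the correct size $6 = \phi(21)/2$ and is visibly disjoint from its complex conjugate $\{\sigma_{20n}\} = \{\sigma_5, \sigma_{11}, \sigma_{13}, \sigma_{17}, \sigma_{19}, \sigma_{20}\}$.

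Second, I would enumerate the proper CM-subfields of $L$. A subfield $L^H$ is CM precisely when $\sigma_{-1} = \sigma_{20} \notin H$, and since $(\ZZ/21\ZZ)^\times$ has no elements of order $4$, the complete list of subgroups $H$ not containing $20$ (and $H \neq \{1\}$) is:
\[\{1,8\},\ \{1,13\},\ \{1,4,16\},\ \{1,2,4,8,11,16\},\ \{1,4,10,13,16,19\}.\]
The corresponding proper CM-subfields are $\QQ(\zeta_7)$, $\QQ(\zeta_3, \zeta_7 + \zeta_7^{-1})$, $\QQ(\zeta_3, \sqrt{-7})$, $\QQ(\sqrt{-7})$, and $\QQ(\zeta_3)$ respectively.

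Third, for each such $H$ I would exhibit one element of $\Phi_Q$ whose $H$-coset is not contained in $\Phi_Q$. For example, for $H = \{1,8\}$ the coset $4H = \{4,11\}$ meets $\Phi_Q$ only in $\{4\}$; for $H = \{1,13\}$ the element $1 \in \Phi_Q$ but $13 \notin \Phi_Q$; for $H = \{1,4,16\}$ the coset $2H = \{2,8,11\}$ meets $\Phi_Q$ in $\{2,8\}$ only; for $H_1 = \{1,2,4,8,11,16\}$, the element $11 \in H_1$ is not in $\Phi_Q$ while $1 \in \Phi_Q \cap H_1$; for $H_3 = \{1,4,10,13,16,19\}$, the element $13 \in H_3$ is not in $\Phi_Q$ while $1 \in \Phi_Q \cap H_3$. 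In each case $\Phi_Q$ fails to be a union of $H$-cosets, so $\Phi_Q$ is not induced from $L^H$.

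There is no real obstacle: the most error-prone step is making sure the list of subgroups is complete and that each coset computation modulo $21$ is correct. To reduce the risk of arithmetic slips, I would record $(\ZZ/21\ZZ)^\times$ as the product $(\ZZ/3\ZZ)^\times \times (\ZZ/7\ZZ)^\times$ via the CRT and identify each subgroup by its projections mod $3$ and mod $7$; that makes the coset decompositions transparent and the enumeration of subgroups forced by the direct product structure.
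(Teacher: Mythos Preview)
Your proposal is correct and follows essentially the same approach as the paper: compute $\Phi_Q=\{1,2,4,8,10,16\}$, then show it is not a union of cosets for any subgroup $H\leq(\ZZ/21\ZZ)^\times$ with $20\notin H$. The only difference is cosmetic: the paper observes that every such $H$ contains one of $\sigma_8$, $\sigma_{13}$, or $\sigma_4$, so it suffices to check the three cyclic subgroups these generate (using exactly the same witness cosets $\{4,11\}$, $\{1,13\}$, $\{2,8,11\}$ you found), whereas you enumerate all five subgroups directly.
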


\begin{proof}
Let $C_Q$ be the smooth projective curve with equation $y^7=x^3-1$. 
From Proposition~\ref{Pdis3}, 
its CM-type is $\Phi = \{\tau_n \mid n=1,2,4,8,10,16\}$.
We check that $\Phi$ is simple by showing that it is not induced from any proper CM-field.
Let $\sigma_i: \QQ(\zeta_{3m}) \to \CC$ be the embedding given by $\sigma_i(\zeta_{3m}) = \zeta_{3m}^i$, for 
$1 \leq i \leq 3m$ with ${\rm gcd}(i,3m)=1$.  
Every subgroup of ${\rm Gal}(\QQ(\zeta_{3m})/\QQ)$ not containing complex conjugation 
contains either $\sigma_8$ (order $2$), $\sigma_{13}$ (order $2$) or $\sigma_4$ (order 3).
It suffices to show that the subgroup generated by each of these has a coset not 
contained in $\Phi$ but having non-trivial intersection with $\Phi$, so that $\Phi$ is not a union of cosets.
For $\langle \sigma_8 \rangle$, this is true because of the coset $\{\sigma_4, \sigma_{11}\}$.
For $\langle \sigma_{13} \rangle$, this is true because of the coset $\{\sigma_1,\sigma_{13}\}$.
For $\langle \sigma_{4} \rangle$, this is true because of the coset $\{\sigma_2, \sigma_8, \sigma_{11}\}$.
\end{proof}

The following lemma will be helpful.

\begin{lemma} \label{Lcompute7a}
In the field $L=\QQ(\zeta_{21})$, write $\zeta=\zeta_{21}$.
Let $\beta_3 =  7/(\zeta^6-\zeta^{15})$.
Let 
\begin{equation}\label{Edefalpha}
\alpha = (\zeta^7-\zeta^{14})(\zeta^2-\zeta^{19})= (\zeta^9+\zeta^{12}) - (\zeta^5 + \zeta^{16}).
\end{equation}
With respect to the CM-type $\Phi=\{1,2,4,8,10,16\}$ of $L$, the following element $z$  
satisfies conditions (1)--(3) in Corollary ~\ref{beta}:
\begin{equation} \label{Edefz}
z= \beta_3 \alpha = 21(\zeta^{2}-\zeta^{19})/((\zeta^{14}-\zeta^{7})(\zeta^{6}-\zeta^{15})).
\end{equation}
\end{lemma}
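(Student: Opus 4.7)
The plan is to verify directly the three conditions of Corollary~\ref{beta} for $z$ with respect to the CM-type $\Phi = \{1, 2, 4, 8, 10, 16\}$. Condition (2), $z = -\overline{z}$, is immediate: since $\overline{\zeta^k} = \zeta^{21-k}$ in $L$, each of the three difference factors $\zeta^2 - \zeta^{19}$, $\zeta^{14} - \zeta^7$, and $\zeta^6 - \zeta^{15}$ has exponents summing to $21$ and is therefore purely imaginary, and a ratio of three purely imaginary elements is purely imaginary.

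For condition (1), that $z$ generates the different $D_{L/\QQ}$, I invoke Lemma~\ref{beta_lemma}(4) with $m = 21 = 3\cdot 7$, which provides the explicit generator
\[
\beta_0 = \frac{21(\zeta^{11} - \zeta^{10})}{(\zeta^{14} - \zeta^7)(\zeta^{12} - \zeta^9)}.
\]
It then suffices to verify that $z/\beta_0 \in \mathcal{O}_L^\times$. After cancelling the common factors $21$ and $\zeta^{14} - \zeta^7$, this amounts to showing that $(\zeta^2 - \zeta^{19})(\zeta^{12} - \zeta^9)$ and $(\zeta^{11} - \zeta^{10})(\zeta^6 - \zeta^{15})$ are associates in $\mathcal{O}_L$. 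Writing each difference as $\zeta^b(\zeta^{a-b}-1)$ rewrites the ratio as a product of factors of the form $(\zeta^k - 1)/(\zeta^\ell - 1)$, each of which is a cyclotomic unit whenever $\gcd(k, 21) = \gcd(\ell, 21)$; the required equalities of gcds can be read off directly.

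For condition (3), ${\rm Im}(\sigma_n(z)) < 0$ for each $n \in \Phi$, I exploit the factorization $z = \beta_3\alpha$ and compute each embedding separately. Since $\zeta^7$ and $\zeta^{14}$ are the primitive cube roots of unity, $\sigma_n(\zeta^7 - \zeta^{14}) = \pm i\sqrt{3}$ with sign determined by $n \bmod 3$; meanwhile $\sigma_n(\zeta^2 - \zeta^{19}) = 2i\sin(4n\pi/21)$ and $\sigma_n(\zeta^6 - \zeta^{15}) = 2i\sin(4n\pi/7)$. Combining these, $\sigma_n(z)$ is purely imaginary with imaginary part $\pm\,7\sqrt{3}\,\sin(4n\pi/21)/\sin(4n\pi/7)$, so verifying condition (3) reduces to sign bookkeeping of $n \bmod 3$ and of the two sines for the six values $n \in \Phi$. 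This case-by-case sign analysis is the principal technical obstacle, with condition (1) being a secondary delicate point in that it requires tracking a concrete cyclotomic unit identity.
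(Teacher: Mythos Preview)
Your proposal is correct and follows essentially the same direct-verification strategy as the paper, but the paper handles condition~(1) more economically. Rather than computing the ratio $z/\beta_0$ and arguing via cyclotomic units, the paper simply observes that $z=-\sigma_4(\beta_0)$ (in fact $z=\sigma_4(\beta_0)$; there is a sign slip in the paper, harmless for the argument). Since $\sigma_4\in{\rm Gal}(L/\QQ)$ preserves the different ideal, this single observation gives condition~(1) immediately, and condition~(2) as well because $\sigma_4$ commutes with complex conjugation. Your cyclotomic-unit computation of $z/\beta_0$ is of course the same fact unpacked---you are effectively re-deriving that $\sigma_4(\beta_0)/\beta_0$ is a unit---but the Galois viewpoint is shorter and explains \emph{why} the gcd conditions you check line up so neatly. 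For condition~(3) the paper gives no details beyond ``by computation,'' so your explicit sign analysis via the factorization $z=\beta_3\alpha$ is actually more informative than what the paper records.
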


\begin{proof}
We verify the first claim by computation using that $\zeta^{14}-\zeta^7 = - \sqrt{-3}$. 
We verify the second claim from Lemma~\ref{beta_lemma}, part (4).
Note that $z=-\sigma_4(\beta_0)$ for $\beta_0$ as in {\em loc. cit}.
\end{proof}

We now compute the integral PEL datum of $M[17]$ using Proposition~\ref{Pdis3}.  

\begin{proposition}\label{M17}
Let $m=7$ and $F=\QQ(\zeta_7)$. 
Let $\xi_1=\beta_1^{-1}$ where $\beta_1  \in 
{\mathcal{O}}_F$ is defined in Example~\ref{Ebeta7}. Consider the units $u_1=\zeta_7^2+\zeta_7^5$ 
and $v=(1+\zeta_7^3+\zeta_7^4)^{-1}$
of $\ZZ[\zeta_7]$.
For the family $M[17]$, with monodromy datum $\gamma=(7,4,(2,4,4,4))$, 
the integral PEL datum of $S_\gamma$
has lattice $\Lambda=(\mathcal{O}_F)^2$ with Hermitian form $H_B$ 
where \[B= \xi_1v \begin{bmatrix} 
u_1& - \zeta_7^2 \\ - \zeta_7^5 &  u_1
\end{bmatrix}.\]
\end{proposition}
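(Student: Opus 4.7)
The plan is to imitate the strategy of Theorem~\ref{PELdatumbeta}, using the distinguished point produced by Proposition~\ref{Pdis3} in place of a boundary degeneration. Because $\gamma = (7,4,(2,4,4,4))$ and $(7,4,(1,1,1,4))$ differ by the action of $\sigma_2 \in {\rm Aut}(\mu_7)$, the two monodromy data cut out the same subvariety of $\mathcal{A}_6$. Proposition~\ref{Pdis3} then provides a distinguished point $Q \in Z_\gamma$ represented by the smooth curve $C_Q \colon y^7 = x^3 - 1$, whose Jacobian $J_Q$ has complex multiplication by $\mathcal{O}_L = \ZZ[\zeta_{21}]$, and Lemma~\ref{Lm7simple} shows that the associated CM-type $\Phi_Q$ is simple.

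Since $L = \QQ(\zeta_{21})$ has class number one, Theorem~\ref{Tlang} identifies $J_Q \simeq \CC^6/\Phi_Q(\mathcal{O}_L)$; since its maximal totally real subfield has narrow class number two (Lemma~\ref{Lgoodm}), Corollary~\ref{CMuniqueCyclo}(2) guarantees that the principal polarization on $J_Q$ of type $(\mathcal{O}_L,\Phi_Q)$ is unique up to isomorphism. Lemma~\ref{Lcompute7a} exhibits it explicitly via $\xi = z^{-1} = (\beta_3\alpha)^{-1}$, giving the $L$-skew-Hermitian form $\psi_L(x,y) = {\rm tr}_{L/\QQ}(\xi x \bar y)$ on $V_L = L$.

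To obtain the $F$-integral PEL datum displayed in the proposition, I restrict scalars from $L = F(\zeta_3)$ down to $F = \QQ(\zeta_7)$. Since $\mathcal{O}_L = \mathcal{O}_F[\zeta_3]$ is a free $\mathcal{O}_F$-module of rank two, this yields $V = F^2$ with lattice $\Lambda = (\mathcal{O}_F)^2$, and the $F$-skew-Hermitian form becomes $\psi_F(x,y) = {\rm tr}_{L/F}(\xi x \bar y)$. Taking $\{1,\zeta_3\}$ as $\mathcal{O}_F$-basis, the matrix entries $B_{ij} = {\rm tr}_{L/F}(\xi \cdot e_i \cdot \bar{e_j})$ are computed using $\Gal(L/F) = \{1,\sigma_8\}$ together with $\sigma_8(\zeta_3) = \zeta_3^2$; to pass from the inertia type $(1,1,1,4)$ to $(2,4,4,4)$ as required for $M[17]$, one then applies the Galois twist by $\sigma_2$ as prescribed by Lemma~\ref{Lchangeembed} and Corollary~\ref{betauniquecyclo}(1).

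The main obstacle is purely computational. Writing $\alpha = A + B\zeta_3$ with $A,B \in F$, one rationalizes $\xi$ using the norm $N_{L/F}(A+B\zeta_3) = A^2-AB+B^2 \in F$ and then simplifies the resulting $F$-expressions, after the Galois twist, into the claimed entries $\xi_1 v u_1$, $-\xi_1 v \zeta_7^2$, and $-\xi_1 v \zeta_7^5$, using the identity $\beta_3 = \sigma_3^2(\beta_1)$ and the relation $\bar\beta_3 = -\beta_3$. Beyond this calculation, the only structural verification needed is that the resulting $B$ is skew-$F$-Hermitian, which is automatic since $u_1, v \in F_0$ are real units and $\bar\xi_1 = -\xi_1$.
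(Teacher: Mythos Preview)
Your proposal is correct and follows essentially the same route as the paper: use the distinguished point $C_Q: y^7=x^3-1$ from Proposition~\ref{Pdis3}, invoke the uniqueness of the principal polarization via Corollary~\ref{CMuniqueCyclo}(2) and the explicit $z$ of Lemma~\ref{Lcompute7a}, restrict scalars along $\mathcal{O}_L=\mathcal{O}_F[\zeta_3]$ with basis $\{1,\zeta_3\}$, and then apply the Galois twist to pass from $(1,1,1,4)$ to $(2,4,4,4)$. The paper packages the restriction-of-scalars computation as a separate lemma (Lemma~\ref{Lcompute7b}), factoring $\xi=\beta_3^{-1}\alpha^{-1}$ with $\beta_3^{-1}\in F$ so that only ${\rm tr}_{L/F}(\alpha^{-1}e_i\bar e_j)$ needs to be computed, and records the twist as $B=\sigma_2^{-1}(A)$; your outline is the same argument with the explicit arithmetic suppressed.
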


\begin{proof}
Consider the family $Z=Z_\gamma$ with monodromy datum $\gamma=(7,4,(1,1,1,4))$. 
Because $(2,4,4,4)=4 \cdot (4,1,1,1)$ and $4\equiv 2^{-1} \bmod 7$, the action of $\sigma_2$ takes $Z$ to the family $M[17]$.  

Let 
\begin{equation} \label{EdefA}
A=  \frac{\zeta_7^2-\zeta_7^5}{7(1+\zeta_7+\zeta_7^6)} \begin{bmatrix} 
\zeta_7^3+\zeta_7^4& -\zeta_7^4 \\ - \zeta_7^3 & \zeta_7^3+\zeta_7^4
\end{bmatrix} . 
\end{equation}
One can check that $A$ is integral away from $7$.
We claim that the integral PEL datum of $Z$ is given by the lattice $\Lambda=(\mathcal{O}_F)^2$ and the Hermitian form for the matrix $A$.  This is sufficient to prove the proposition because,  
by the last statement of Corollary~\ref{beta}, 
the integral PEL datum of $M[17]$ is then given by the lattice 
$\Lambda=(\mathcal{O}_F)^2$ and the Hermitian form for the matrix 
$B=\sigma_2^{-1}(A)$.

We turn to computing the integral PEL datum of $Z$. 
Let $L=\QQ(\zeta_{21})$.  Consider the CM-type $\Phi=\{1,2,4,8,10,16\}$ for $L$.
Consider the curve $C_Q$ of genus $6$ given by the equation $y^7=x^3-1$ and the associated 
$\mu_7$-cover $C_Q \to {\mathbb P}^1$.
By Proposition~\ref{Pdis3}: this cover is represented by a point $Q$ of $Z$; 
the Jacobian ${\rm Jac}(C_Q)$ is a principally polarized abelian variety with 
complex multiplication by $({\mathcal O}_L,\Phi_Q)$; and this CM-type is simple by Lemma~\ref{Lm7simple}.
In other words, $Q$ is a simple distinguished point of $Z$.  

Note that $L$ has class number 1 and $L_0$ has narrow class number 2. 
By Corollary~\ref{CMuniqueCyclo}, there exists a unique
principally polarized CM-abelian variety of type $({\mathcal O}_L,\Phi)$ up to isomorphism. 
By Lemma~\ref{Lcompute7a}, the element $z$ from \eqref{Edefz} 
satisfies conditions (1)--(3) in Corollary ~\ref{beta} with respect to $\Phi$.   
Thus ${\rm Jac}(C_Q)$ is isomorphic to the torus $\CC^6/\Phi(\mathcal{O}_L)$, 
with principal polarization given by $\xi=z^{-1}$. 
Furthermore, the integral PEL datum of $Z$ is given by the lattice 
$\Lambda=\mathcal{O}_L\subset V=L$ and the Hermitian form 
$\langle x,y\rangle={\rm tr}_{L/Q}(x\xi \bar{y}^T)$.
Note that $\mathcal{O}_L=\ZZ[\zeta_{21}] = \mathcal{O}_F[\zeta_3]$.
With respect to the ordered basis $1, \zeta_3$ for $\mathcal{O}_L$ over $\mathcal{O}_F$, 
the Hermitian form is given by a matrix in ${\rm GL}_2(\mathcal{O}_F)$.
By Lemma~\ref{Lcompute7b}, this matrix is $A$.
\end{proof}

\begin{lemma} \label{Lcompute7b}
With respect to the ordered basis $1, \zeta_3$ for $\mathcal{O}_L$ over $\mathcal{O}_F$, 
the Hermitian form 
$\langle x,y\rangle={\rm tr}_{L/Q}(x\xi \bar{y}^T)$
is given by the matrix $A \in {\rm GL}_2(\mathcal{O}_F)$, where $A$ is given in \eqref{EdefA}.
\end{lemma}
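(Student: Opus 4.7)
The plan is to convert the $\QQ$-valued trace pairing into an $F$-valued Hermitian form via $\mathrm{tr}_{L/\QQ}=\mathrm{tr}_{F/\QQ}\circ\mathrm{tr}_{L/F}$. Concretely, define $H(x,y):=\mathrm{tr}_{L/F}(x\,\xi\,\bar y)$ on $V=L$; then $\mathrm{tr}_{F/\QQ}\circ H=\langle\cdot,\cdot\rangle$, and $A$ is by definition the Gram matrix of $H$ in the basis $e_1=1$, $e_2=\zeta_3$ of $\mathcal O_L=\mathcal O_F[\zeta_3]$. Since $\bar{\zeta_3}=\zeta_3^{-1}=\zeta_3^2$, the four entries reduce to
\[
A_{11}=\mathrm{tr}_{L/F}(\xi),\quad A_{12}=\mathrm{tr}_{L/F}(\xi\zeta_3^2),\quad A_{21}=\mathrm{tr}_{L/F}(\zeta_3\,\xi),\quad A_{22}=\mathrm{tr}_{L/F}(\xi)=A_{11}.
\]

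Next I would identify $\Gal(L/F)=\{1,\tau\}$, where $\tau$ is the unique nontrivial automorphism over $F$. With $\zeta=\zeta_{21}$, one has $\tau(\zeta)=\zeta^8$; equivalently, $\tau$ fixes $\zeta_7=\zeta^3$ and sends $\zeta_3=\zeta^7$ to $\zeta_3^2$. Hence $\mathrm{tr}_{L/F}=\mathrm{id}+\tau$. Writing $\xi=\beta_3^{-1}\alpha^{-1}$ with $\beta_3\in F$ fixed by $\tau$, and using $\tau(\zeta^7-\zeta^{14})=-(\zeta^7-\zeta^{14})$ and $\tau(\zeta^2-\zeta^{19})=\zeta^{16}-\zeta^5$, I obtain $\tau(\alpha)=-(\zeta^7-\zeta^{14})(\zeta^{16}-\zeta^5)$. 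Using the CRT decomposition $\zeta_{21}=\zeta_3\zeta_7^5$ and the identity $(\zeta^7-\zeta^{14})^2=(\zeta_3-\zeta_3^2)^2=-3$, the norm $\alpha\,\tau(\alpha)$ and trace $\alpha+\tau(\alpha)$ simplify to elements of $F$ that I expect to take the clean closed form
\[
\alpha\,\tau(\alpha)=3\bigl(1+\zeta_7+\zeta_7^6\bigr),\qquad \alpha+\tau(\alpha)=3\bigl(\zeta_7^3+\zeta_7^4\bigr).
\]

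With these two identities in hand, the diagonal entry is
\[
A_{11}=\beta_3^{-1}\,\frac{\alpha+\tau(\alpha)}{\alpha\,\tau(\alpha)}=\frac{\zeta_7^2-\zeta_7^5}{7}\cdot\frac{\zeta_7^3+\zeta_7^4}{1+\zeta_7+\zeta_7^6},
\]
matching \eqref{EdefA}. For the off-diagonal entries I would expand $\zeta_3\tau(\alpha)+\zeta_3^2\alpha$ and $\zeta_3^2\tau(\alpha)+\zeta_3\alpha$ in the $\mathcal O_F$-basis $\{1,\zeta_3\}$ of $\mathcal O_L$; using $1+\zeta_3+\zeta_3^2=0$, the mixed terms collapse so that the $\zeta_3$- and $\zeta_3^2$-coefficients cancel and only $-3\zeta_7^3$ (resp.\ $-3\zeta_7^4$) survives, producing the stated off-diagonal entries after dividing by $\alpha\,\tau(\alpha)$ and multiplying by $\beta_3^{-1}$.

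The main obstacle is the bookkeeping in the third step: expressing products and sums of various powers of $\zeta_{21}$ in the $\mathcal O_F$-basis $\{1,\zeta_3\}$ of $\mathcal O_L$, and verifying the key cancellations that turn the expansion of $\alpha\,\tau(\alpha)$ into $3(1+\zeta_7+\zeta_7^6)$ and the mixed traces into pure $\zeta_7$-monomials. Once these simplifications are carried out, the matching with \eqref{EdefA} is immediate, completing the proof.
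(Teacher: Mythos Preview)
Your proposal is correct and follows essentially the same route as the paper: factor $\mathrm{tr}_{L/\QQ}=\mathrm{tr}_{F/\QQ}\circ\mathrm{tr}_{L/F}$, use the generator $\tau=\sigma_8$ of $\Gal(L/F)$, pull out $\beta_3^{-1}\in F$, and compute the four Gram entries $\alpha^{-1}+\tau(\alpha^{-1})$, $\zeta_3^2\alpha^{-1}+\zeta_3\tau(\alpha^{-1})$, etc. Your only cosmetic difference is that you first package the computation via $N_{L/F}(\alpha)=3(1+\zeta_7+\zeta_7^6)$ and $\mathrm{tr}_{L/F}(\alpha)=3(\zeta_7^3+\zeta_7^4)$ before dividing, whereas the paper evaluates $\alpha^{-1}+\tau(\alpha^{-1})$ directly; the resulting entries agree with \eqref{EdefA}.
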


\begin{proof}
Let $x,y \in \Lambda$.  By Lemma~\ref{Lcompute7a}, since $\beta_3 = 7/(\zeta_7^2-\zeta_7^5)$ is in 
${\mathcal O}_{F_0}$, we can write: 
\[\langle x,y\rangle={\rm tr}_{L/\QQ}(x z \bar{y}) = {\rm tr}_{F/\QQ}(\beta_3^{-1} {\rm tr}_{L/F}(x \alpha^{-1} \bar{y}))\]
where $\alpha$ is as in Equation \eqref{Edefalpha}.

Let $\tau=\sigma_8 \in {\rm Gal}(L/\QQ)$.  Then $\tau$ is the generator of ${\rm Gal}(L/F)$. 
Then
\[ {\rm tr}_{L/\QQ}(x \alpha^{-1} \bar{y})= {\rm tr}_{F/\QQ}(x \alpha^{-1} \bar{y}+\tau(x)\tau(\alpha^{-1})\tau(\bar{y})).\] 
Write $x=x_1+x_2\zeta_3$ and $y=y_1 + y_2 \zeta_3$, for $x_1,x_2, y_1,y_2 \in\mathcal{O}_{F}$.
We compute 
\[
{\rm tr}_{L/\QQ}(x \alpha^{-1} \bar{y}) =  
{\rm tr}_{F/\QQ}((x_1+x_2\zeta_3) \alpha^{-1} (\bar{y}_1+\bar{y}_2\zeta_3^2) + 
(x_1+x_2 \zeta_3^2) \tau(\alpha^{-1})(\bar{y}_1+\bar{y}_2\zeta_3)).\]
Write $a_{1,1}=\alpha^{-1}+\tau(\alpha^{-1})$, $a_{1,2} = \zeta_3^2 \alpha^{-1}+ \zeta_3 \tau(\alpha^{-1})$, and 
$a_{2,1} = \zeta_3 \alpha^{-1}+ \zeta_3^2 \tau(\alpha^{-1})$.  Then
\[{\rm tr}_{L/\QQ}(x \alpha^{-1} \bar{y}) =  
 {\rm tr}_{F/\QQ}(x_1\bar{y}_1 a_{1,1} 
+ x_1\bar{y}_2 a_{1,2} 
+x_2\bar{y}_1 a_{2,1} 
+  x_2\bar{y}_2 a_{1,1}).\]
We compute that 
$a_{1,1} = \alpha^{-1} + \tau(\alpha^{-1}) = (\zeta^4_7+\zeta_7^3)/(1+\zeta_7+\zeta_7^6)$,
$a_{1,2} = - \zeta_7^4/(1+\zeta_7+\zeta_7^6)$, and $a_{2,1} = -\zeta_7^3/(1+\zeta_7+\zeta_7^6)$.
Thus, 
$\langle x,y\rangle
=  {\rm tr}_{F/\QQ}((x_1,x_2)A(\bar{y}_1,\bar{y}_2)^T)$.
\end{proof}

\section{Applications for composite $m$} \label{Scompositem}

We illustrate how to extend Theorem~\ref{PELdatumbeta}
by computing the integral PEL datum in some cases when $m$ is composite.
In particular, we determine the integral PEL datum for $6$ of the Moonen special families
where $m=4,6,10$.

\begin{remark} \label{Rmoonenomit}
Our techniques do not apply well for symplectic Shimura varieties, so
we exclude: the modular curve $M[1]$ with $m=2$;
the Picard surface $M[2]$ with $m=2$; the family $M[7]$ with $m=4$; and $M[12]$ with $m=6$.
We exclude $M[15]$ with $m=8$ and $M[20]$ with $m=12$, 
since the CM-types for bi-quadratic CM-fields are never simple.
Our techniques are not sufficient to handle $M[13]$ with $m=6$ or $M[19]$ with $m=9$.
\end{remark}

\subsection{Difficulties when $m$ is composite}

The key to Theorem~\ref{PELdatumbeta}
is the existence of a distinguished point $P$ in $Z_\gamma$ satisfying the 
properties in Proposition~\ref{Pdegenerate}. 
When $m$ is composite, this is not always possible for the following reasons.

\begin{remark} \label{Rcomposite1}
One issue when $m$ is composite is that the curve $C_P$ might not have compact type.
For example, the Moonen family $M[12]$, with $\gamma=(6,4,(1,1,1,3))$,
has no admissible degenerations of compact type.  The reason is that the two covers 
with inertia types $a_1=(1,1,4)$ and $a_2=(2,1,3)$
would need to be joined at two points, leading to a cycle in the dual graph of 
$C_P$.\footnote{There is a typo in this case in \cite[Lemma 6.4]{LMPT2}.}
The same is true for $M[7]$, with $\gamma = (4,4,(1,1,1,1))$.
\end{remark}

\begin{remark} \label{Rcomposite2}
Another issue when $m$ is composite is 
that the integral group algebra $\ZZ[\mu_m]$ has
non-trivial index in $\prod_{1\leq d|m}\ZZ[\zeta_d]$. 
Hence, the Jacobian of a $\mu_m$-cover of $\bP^1$ branched at 3 points might not have 
complex multiplication by a maximal order. 
See Section~\ref{other_sec2}.
\end{remark} 

\subsection{Integral PEL data for two families with $m=4$} \label{Sm4}

We find the integral PEL datum for two of the three special families with $m=4$: 
$M[4]$ and $M[8]$; we exclude $M[7]$ as it is not unitary.  When $m=4$, the hypotheses of Proposition~\ref{Pdegenerate} are not satisfied
but we can find a distinguished point in the family by direct computation.

\begin{example} \label{Euandb4}
If $m=4$, then $\beta_0 = - 2i$ from Lemma~\ref{beta_lemma}.
Set $u_1=-1$, so $\beta_1=-\beta_0 = 2i$.
If $\cf_1= (0,1)$, then $\beta_1$ satisfies the 3 conditions of Lemma~\ref{beta} 
and all CM-types of $F=\QQ(i)$ are simple. 
\end{example}

\begin{corollary}\label{coro4}
Let $m=4$ and $F=\QQ(i)$. Let $\xi=\beta_1^{-1}= 1/2i$.  For the family $M$ with monodromy datum $\gamma$ below, 
the integral PEL datum for $S_\gamma$
has lattice $(\mathcal{O}_F)^r$ with Hermitian form $H_B$
where $r$ and $B\in {\rm GL}_r(\mathcal{O}_F)$ are:
 \[
 \begin{array}{|l|l|l|l|} 
 \hline
 M& \gamma=(m,N,a) & r & B \in  {\rm GL}_r(\mathcal{O}_F)\\\hline
M[4] & (4, 4, (1,2,2,3)) & 2 & {\rm diag} [\xi, -\xi]\\ \hline
M[8] &  (4, 5, (1,1,2,2,2)) &3 & {\rm diag} [\xi,-\xi,\xi] \\ \hline
\end{array}\]
\end{corollary}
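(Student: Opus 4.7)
The plan is to adapt the proof of Theorem~\ref{PELdatumbeta} to the composite case $m = 4$. Proposition~\ref{Pdegenerate} is stated only for odd prime $m$, so the first task is to produce, for each of the two families, a distinguished point $P \in Z_\gamma$ at which the Jacobian of $C_P$ is a product of principally polarized elliptic curves, each with complex multiplication by $\mathcal{O}_F = \ZZ[i]$. Once such a $P$ exists, the machinery used in Theorem~\ref{PELdatumbeta} goes through: $F = \QQ(i)$ has class number $1$, $F_0 = \QQ$ has narrow class number $1$, and every CM-type of $F$ is simple by Lemma~\ref{LFermatsimple}, so Corollary~\ref{CMuniqueCyclo} implies that each elliptic factor is the unique principally polarized CM abelian variety of its type up to isomorphism. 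The Hermitian form on ${\rm Jac}(C_P)$ is then the block-diagonal sum of the forms on the factors, and the integral PEL datum of $S_\gamma$ can be read off at $P$.

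For $M[4]$ with $\gamma = (4, 4, (1, 2, 2, 3))$, I would degenerate by collapsing two branch points whose inertia indices sum to a nonzero residue mod $4$, for instance the first two, with sum $1 + 2 = 3$. By the compact-type criterion of \cite[Remark~5.2]{LMPT2}, this yields an admissible join of two $\mu_4$-covers with monodromy data $(4, 3, (1, 2, 1))$ and $(4, 3, (3, 2, 3))$. Each is a genus-$1$ curve by \eqref{Egenus}, and the discussion of Section~\ref{SfirstCM} shows its Jacobian has CM by $\ZZ[i]$ since the only $d > 1$ dividing $4$ and not dividing every inertia entry is $d = 4$. Formula \eqref{DMeqn} yields the signatures of these components on $(\tau_1,\tau_3)$, and Example~\ref{Euandb4} together with Corollary~\ref{betauniquecyclo} produces the corresponding $\beta_j$'s, whose inverses furnish the diagonal entries of $B$. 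For $M[8]$ with $\gamma = (4, 5, (1, 1, 2, 2, 2))$, I would iterate the procedure: first collapse a pair with inertia summing nonzero mod $4$ (for example a $1$ and a $2$, summing to $3$) to split off a genus-$1$ component and leave a $4$-point residual cover, then collapse a further valid pair in that residual cover to obtain two more genus-$1$ components. The resulting three CM-types reproduce the total signature of $M[8]$ and assemble into the stated $B$.

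The principal obstacle is the existence of a compact-type admissible degeneration in which every resulting component has CM by the full ring $\ZZ[i]$. As highlighted in Remark~\ref{Rcomposite1}, this can fail: for $M[7]$ with $\gamma = (4, 4, (1, 1, 1, 1))$, every pair of inertia indices sums to $0 \bmod 4$, producing vanishing cycles rather than separating nodes. A related concern is that certain residual covers (for instance $(4, 4, (2, 2, 2, 2))$, which arises under a naive choice of first degeneration in $M[8]$) have every inertia entry divisible by $2$, so they are not primitive $\mu_4$-covers and the CM decomposition lies over $\ZZ[\zeta_2] = \ZZ$ rather than $\ZZ[i]$. For $M[4]$ and $M[8]$ one must therefore choose the degeneration sequence so that both the compact-type condition holds at every stage and each sub-cover is a primitive $\mu_4$-cover; the mixed residues mod $4$ in the inertia data make this possible, and this is the crux of the argument.
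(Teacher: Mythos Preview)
Your proposal is correct and follows essentially the same route as the paper's proof: the paper produces the distinguished points via the compact-type degenerations $(1,2,1)+(3,2,3)$ for $M[4]$ and $(1,2,1)+(3,2,3)+(1,2,1)$ for $M[8]$ (citing \cite[Lemma~6.4]{LMPT2}), then invokes the argument of Theorem~\ref{PELdatumbeta}. Your additional care in noting that one must avoid degenerations whose residual cover has all-even inertia (such as $(2,2,2,2)$) and in invoking simplicity via Lemma~\ref{LFermatsimple} so that Corollary~\ref{CMuniqueCyclo} applies directly is well placed and makes the argument self-contained without appeal to \cite[Appendix, Proposition~8]{shimuratranscend}.
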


\begin{proof}
When $m=4$, then $a=(3,2,3)$ has signature $\cf=(0,1)$ and $\sigma_2(\beta_1) <0$.
Also $a=(1,2,1)$ has signature $\cf=(1,0)$ and $\sigma_1(-\beta_1) <0$.
By \cite[Lemma 6.4]{LMPT2}, there is a distinguished point in the family $M[4]$ and the family $M[8]$.
The family $M[4]$ has an admissible degeneration, expressed in short hand as $(1,2,1) + (3,2,3)$.
The family $M[8]$ has an admissible degeneration, expressed in short hand as $(1,2,1) + (3,2,3) + (1, 2, 1)$.
The result then follows from Theorem~\ref{PELdatumbeta}.
\end{proof}

The case $M[8]: \xi[1,-1,1]$ matches line 3 of the table on \cite[page 1]{shimuratranscend}

\subsection{Remark when $m=9$} \label{Sm9}

The techniques in this paper are not sufficient to find the integral PEL datum for the Moonen special family $M[19]$ when $m=9$.  This family has inertia type $a=(3,5,5,5)$ and its
only admissible degeneration can be expressed in short hand as $(3,5,1)+(8,5,5)$.
For $a_2=(8,5,5)$, the Jacobian has complex multiplication by $\QQ[\zeta_3]\times \QQ[\zeta_9]$. As the action of $\ZZ[\mu_9]$ does not extend to the maximal order $\ZZ[\zeta_3]\times \ZZ[\zeta_9]$,  we do not know how to compute its lattice.

We expect that our technique is sufficient to determine the integral PEL datum when $m=9$, $N=4$ and $a=(3,1,8,6)$.  We leave the details to the reader to check.

\subsection{Generalities for twice a prime}

\begin{notation}\label{beta610}
Let $m=2m'$ where $m'$ is an odd prime.  
Let $F=\QQ(\zeta_m)$ and $F'=\QQ(\zeta_{m'})$.
We identify $F=F'$ and $\ZZ[\zeta_{m}]=\ZZ[\zeta_{m'}]$, 
by $\zeta_{m}=- \zeta_{m'}^{(m'+1)/2}$.
There is a bijection between CM types $\Phi$ for $F$ and CM types $\Phi'$ for $F'$ where
$\sigma_j \in \Phi$ for an odd integer $1 \leq j \leq m$ if and only if 
$\sigma_{j'} \in \Phi'$, where $j'$ is the reduction of $j$ modulo $m'$.
\end{notation}

If $\beta$ satisfies Corollary~\ref{betauniquecyclo} with respect to the CM type
$(\ZZ[\zeta_{m'}], \Phi')$ then it also satisfies 
Corollary~\ref{betauniquecyclo} with respect to the CM type
$(\ZZ[\zeta_{m}], \Phi)$.
This can be verified in general but we only need a few cases from
Example~\ref{Euandb3} when $m'=3$ and Example~\ref{Ebeta5} when $m'=5$. 

\begin{example} \label{Ebeta610}
\[\begin{array}{|c|c|c|c|}
\hline
m & \beta & \text (\ZZ[\zeta_{m'}], \Phi') & (\ZZ[\zeta_m], \Phi) \\ \hline
6 & b_2=\sqrt{-3} & (\ZZ[\zeta_3],\{2\}) & (\ZZ[\zeta_6], \{5\}) \\ \hline
10 & \beta_1 =  5/(\zeta_5^3-\zeta_5^2) & (\ZZ[\zeta_5],\{2,4\}) & (\ZZ[\zeta_{10}],\{7,9\})  \\ \hline 
10 & \beta_2 =  5/(\zeta_5-\zeta_5^4) &  (\ZZ[\zeta_5], \{1,2 \}) & (\ZZ[\zeta_{10}], \{ 1,7\}) \\ \hline
\end{array}.\]
\end{example}

\subsection{Integral PEL data for 4 families with $m=6,10$} \label{other_sec2}

We find the integral PEL datum for three special families with $m=6$, namely $M[5]$, $M[9]$ and $M[14]$, and for the special family with $m=10$, namely $M[18]$. 
For $m=6$, we exclude $M[12]$ as it is not unitary and $M[13]$ (see Remark~\ref{M13}).
For $m$ composite, the hypotheses of Proposition~\ref{Pdegenerate} are not satisfied.
In each case, we find a distinguished point $P$ in the family
for which the issues in Remarks~\ref{Rcomposite1} and \ref{Rcomposite2} do not occur. 

\begin{corollary}\label{coro10}
Recall Notation~\ref{beta610}.
For the special family $M[18]$ with monodromy datum $\gamma=(10, 4, (3,5,6,6))$,
the integral PEL datum of $S_\gamma$ has lattice
$\Lambda=(\mathcal{O}_{F'})\oplus (\mathcal{O}_F)^2$ with Hermitian form $H_B$
where $B\in {\rm GL}_{1}(\mathcal{O}_{F'})\times {\rm GL}_2(\mathcal{O}_F)$ is
${\rm diag} [\xi_2]\oplus {\rm diag}[\xi_1,\xi_2]$ where $\xi_i=\beta_i^{-1}$.
\end{corollary}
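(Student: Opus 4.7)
The plan is to adapt Theorem~\ref{PELdatumbeta} to this composite-$m$ setting, following the template used for $m=4$ in Corollary~\ref{coro4}: produce a distinguished point of compact type in $Z_\gamma$, decompose its Jacobian into CM pieces, and read off the Hermitian matrix block by block.

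First I would construct the distinguished point. By \cite[Remark 5.2, Lemma 6.4]{LMPT2}, letting the branch points with inertia $3$ and $6$ collide yields an admissible $\mu_{10}$-cover $C_P\to T$ of a join of two projective lines whose two components carry $\mu_{10}$-covers with inertia triples $(3,6,1)$ and $(5,6,9)$. Both components are connected because each inertia triple has $\gcd$ equal to $1$ with $m=10$, and their genera from \eqref{Egenus} sum correctly to $4+2=6=g_\gamma$. Next I would apply the recipe of Section~\ref{SfirstCM} (via \cite[Lemma 3.1]{LMPT1}) to decompose each Jacobian. For $(5,6,9)$ only $d=10$ satisfies ``$d\nmid a_i$ for all $i$'', so $J_2:={\rm Jac}(C_{(5,6,9)})$ is a CM abelian surface with endomorphisms by $\mathcal{O}_F=\ZZ[\zeta_{10}]$. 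For $(3,6,1)$ both $d=5$ and $d=10$ qualify, so $J_1:={\rm Jac}(C_{(3,6,1)})$ is isogenous to $A_5\times A_{10}$ with CM by $\mathcal{O}_{F'}=\ZZ[\zeta_5]$ and $\mathcal{O}_F=\ZZ[\zeta_{10}]$ respectively, of total dimension $2+2=4$. The product principal polarization on $J_1\oplus J_2$ then lives naturally on the lattice $\Lambda=\mathcal{O}_{F'}\oplus\mathcal{O}_F^2$.

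The third step is to compute the CM-type on each factor via \eqref{DMeqn} and then identify the corresponding $\beta\in\mathcal{O}_F$ using the table in Example~\ref{Ebeta610}. For $(3,6,1)$ the signature is nonzero at $n=1,2,4,7$; projecting $\ZZ[\mu_{10}]$ onto $\ZZ[\zeta_5]$ sends the $\mu_{10}$-eigenspace indexed by even $n$ (of order $5$) to the $\sigma_{n/2}$-eigenspace of $\ZZ[\zeta_5]$, so $\Phi_{A_5}=\{1,2\}_{\ZZ[\zeta_5]}$ and $\Phi_{A_{10}}=\{1,7\}_{\ZZ[\zeta_{10}]}$, both matching $\beta_2$; for $(5,6,9)$, $\cf(\tau_n)=1$ at $n=7,9$, giving $\Phi_{J_2}=\{7,9\}_{\ZZ[\zeta_{10}]}$, which matches $\beta_1$. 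Uniqueness of the principal polarization on each factor follows from Corollary~\ref{CMuniqueCyclo} together with Lemma~\ref{Lgoodm} (which gives narrow class number one for $F_0$) and the simplicity of each CM-type, automatic here because $\QQ(\zeta_5)$ contains no proper CM-subfield (Lemma~\ref{LFermatsimple}). Assembling the diagonal entries produces $B={\rm diag}[\xi_2]\oplus{\rm diag}[\xi_2,\xi_1]$, which is equivalent to the stated $B={\rm diag}[\xi_2]\oplus{\rm diag}[\xi_1,\xi_2]$ after permuting the two $\mathcal{O}_F$-blocks; this transports from $P$ to $S_\gamma$ by the Mumford--Tate argument of \cite[Sections~2.8 and 2.9, also 3.2]{moonenLinearity}.

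The main obstacle is the bookkeeping for the $A_5$-block. The inclusion $\ZZ[\mu_{10}]\subsetneq \ZZ[\zeta_5]\times\ZZ[\zeta_{10}]$ is strict, so the projection onto the $\ZZ[\zeta_5]$-factor sends $\zeta_{10}$ to $\zeta_5$ rather than to $\zeta_5^2$; consequently the $\sigma_k$-eigenspace of $\ZZ[\zeta_5]$ on ${\rm Lie}(A_5)$ corresponds to the $n=2k$ eigenspace of the $\mu_{10}$-action, not the $n=k$ eigenspace. Getting this correspondence right, and reconciling it with the abstract identification $\zeta_{10}=-\zeta_5^3$ of Notation~\ref{beta610}, is what produces $\xi_2$ rather than $\xi_1$ on the $\mathcal{O}_{F'}$ block; this is the only substantive point of divergence from the prime case.
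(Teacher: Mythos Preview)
Your degeneration differs from the paper's, and that difference is not cosmetic: it is exactly where your argument breaks down. You collide the branch points with inertia $3$ and $6$, obtaining connected components with inertia triples $(3,6,1)$ and $(9,5,6)$. The paper instead collides the two inertia-$6$ branch points, obtaining $(3,5,2)$ and $(8,6,6)$. The second of these has $\gcd(8,6,6)=2$, so the $\mu_{10}$-cover over that component is \emph{disconnected}: it is literally ${\rm Ind}_5^{10}(C_2)$ for the $\mu_5$-cover $C_2$ with inertia $(4,3,3)$. Its Jacobian is therefore $A_2\times A_2$ with the product polarization, and the $\ZZ[\mu_{10}]$-action is the diagonal action of $\ZZ[\zeta_5]\times\ZZ[\zeta_{10}]$ on the two factors. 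This is what lets the paper read off the lattice $\mathcal{O}_{F'}\oplus\mathcal{O}_F$ and the block-diagonal Hermitian form directly.

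Your component $C_{(3,6,1)}$, by contrast, is connected (since $\gcd(3,6,1)=1$), and both $d=5$ and $d=10$ contribute to its CM-algebra. As Section~\ref{SfirstCM} warns, the image of $\ZZ[\mu_{10}]$ in $\ZZ[\zeta_5]\times\ZZ[\zeta_{10}]$ then has nontrivial index (here at the prime $2$, since $\Phi_5\equiv\Phi_{10}\bmod 2$). Hence $J_1={\rm Jac}(C_{(3,6,1)})$ is only \emph{isogenous} to $A_5\times A_{10}$; you have no argument that it is isomorphic to that product as a principally polarized abelian variety, nor that $H^1(C_{(3,6,1)},\ZZ)\cong\mathcal{O}_{F'}\oplus\mathcal{O}_F$ as a $\ZZ[\mu_{10}]$-module. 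Your point $P$ is therefore not distinguished in the sense of Definition~\ref{Ddistinguished}, and the passage ``the product principal polarization on $J_1\oplus J_2$ then lives naturally on the lattice $\Lambda=\mathcal{O}_{F'}\oplus\mathcal{O}_F^2$'' is unjustified. This is precisely the obstruction of Remark~\ref{Rcomposite2}, and Remark~\ref{M13} shows (for $m=6$) that it is a genuine failure mode, not merely a gap in exposition. Your CM-type bookkeeping and the resulting diagonal entries $\xi_2,\xi_2,\xi_1$ are correct, but to make the proof work you must switch to the paper's degeneration or otherwise establish that $J_1$ really splits with the product polarization.
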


\begin{proof}
Let $m=10$ and $m'=5$.
Let $M=M[18]$ with $\gamma = (10, 4, (3,5,6,6))$ and signature type $\cf=(1,1,0,1,0,0,2,0,1)$.  
The proof is similar to that of Theorem~\ref{PELdatumbeta}.  
We produce a simple distinguished point $P$ in the family similar to that in Proposition~\ref{Pdegenerate}(1);
it represents an
admissible $\mu_m$-cover $\psi : C_P \to T$, where $T$ is a tree of projective lines and $C_P$ is a curve of compact type such that each of its irreducible components is a curve admitting a $\mu_m$-cover of $\bP^1$ branched at $3$ points. 
We verify by direct computation that each irreducible component of $C_P$ has complex multiplication by either $\ZZ[\zeta_m]$ or $\ZZ[\zeta_{m'}]$.

Consider the $\mu_5$-cover $\psi_2 : C_2 \to \bP^1$ branched at three points, with inertia type $a_2=(4, 3, 3)$, and signature $\cf_2=(0,1,0,1)$.  
Then $A_2={\rm Jac}(C_2)$ has complex multiplication by $(\ZZ(\zeta_5), \{2,4\})$.
Consider the induced curve 
$\tilde{C}_2={\rm Ind}_5^{10}(C_2)$, which is the disconnected curve consisting of two copies of $C_2$, and the induced $\mu_{10}$-cover
$\Psi_2: \tilde{C}_2 \to \bP^1$.
Then $\Psi_2$ is branched at three points and, somewhat imprecisely, 
we can say that it has inertia type $(8,6,6)={\rm Ind}_5^{10}(4,3,3)$.
Above the first branch point, there are two points $\eta_2$ and $\eta_2'$ on $\tilde{C}_2$, and they are labeled by the two cosets of $\mu_5 \subset \mu_{10}$.
Let $\mathcal{A}_2 = {\rm Jac}(\tilde{C}_2) \simeq A_2^2$. 

As explained in \cite[Section 3.1]{LMPT3}, the signature type of $\mathcal{A}_2$ is
\[\cf_2=(0,1,0,1,0,0,1,0,1)=(0,1,0,1,0,0,0,0,0)+(0,0,0,0,0,0,1,0,1).\] 
The action of $\ZZ[\mu_{10}]$ on $\mathcal{A}_2$ is the diagonal action of $\ZZ[\zeta_5]\times \ZZ[\zeta_{10}]$ on $A^2$.  The first (resp.\ second) copy of $A_2$ has CM-type 
$(\ZZ[\zeta_5], \{2,4\})$ (resp.\ $(\ZZ[\zeta_{10}], \{7,9\})$);
for this CM-type, the element $\beta_1$ (resp.\ $\beta_1$) satisfies Corollary~\ref{betauniquecyclo}, as seen in Example~\ref{Ebeta610}. 

Consider the $\mu_{10}$-cover $\psi_1 : C_1 \to \bP^1$ branched at three points, with inertia type $a_1=(3, 5, 2)$.
Above the 3rd branch point, there are two points $\eta_1$ and $\eta_1'$ on 
$C_1$ and they are labeled by the two cosets of $\mu_5 \subset \mu_{10}$.
Let $A_1 = {\rm Jac}(C_1)$; 
it has signature type $\cf_1 = (1,0,0,0,0,0,1,0,0)$. By \cite[Lemma 3.1]{LMPT1}, $A_1$ has complex multiplication by $(\ZZ[\zeta_{10}],\{1,7\})$.
Since $\{1, 7\} = \{3 \cdot 7, 3 \cdot 9\} \bmod 10$, the element $\beta$ that satisfies 
Corollary~\ref{betauniquecyclo} is $\sigma_7(\beta_1) = \beta_2$.
All CM types of $\QQ(\zeta_5)=\QQ(\zeta_{10})$ are simple. 

Let $\mathcal{C}$ denote the singular curve, whose components are $C_1$ and $\tilde{C}_2$, formed by identifying $\eta_1$ $\eta_1'$ with $\eta_2$ and $\eta_2'$, so that the cosets are matched correctly,
in two ordinary double points.
The curve $\mathcal{C}$ admits an admissible $\mu_{10}$-cover $\Psi$ to a chain of two projective lines.  By construction, $\mathcal{C}$ has an action by ${\mathcal O}_{F'} \oplus 
{\mathcal O}_{F}^2$, with CM-type given by ${\rm diag} [\xi_2]\oplus {\rm diag}[\xi_1,\xi_2]$. 

The last thing to check is that $\Psi$ is represented by a point $P \in Z_\gamma$.
Since $\Psi$ is admissible, it can be deformed to a $\mu_{10}$-cover of $\bP^1$ with inertia type $(3,5,6,6)$.  This has signature type $\cf=(1,1,0,1,0,0,2,0,1)$. 
Hence, $\Psi$ is represented by a point $P \in Z_\gamma$ and by the preceding paragraph
$P$ is a simple distinguished point.
\end{proof}

\begin{corollary}\label{coro610}
Recall Notation~\ref{beta610} and let $\xi=1/\sqrt{-3}$.
For the special families $M$ with monodromy datum $\gamma=(6,N,a)$ as below, 
the integral PEL datum of $S_\gamma$ has lattice
$\Lambda=(\mathcal{O}_{F'})^{r'}\oplus (\mathcal{O}_F)^r$ with Hermitian form $H_B$ 
where $r',r $, and  $B\in {\rm GL}_{r'}(\mathcal{O}_{F'})\times {\rm GL}_r(\mathcal{O}_F)$ are:
\[
 \begin{array}{|l|l|l|l|} 
  \hline
 M& \gamma=(m,N,a) & (r',r)& B \in {\rm GL}_{r'}(\mathcal{O}_{F'})\times {\rm GL}_r(\mathcal{O}_F)\\\hline
M[5] & (6, 4, (2,3,3,4)) & (0,2) &{\rm diag} [\xi, -\xi]\\ \hline
M[9]  &(6, 4, (1,3,4,4)) &(1,2) & {\rm diag} [-\xi]\oplus{\rm diag}[\xi, -\xi]\\ \hline
M[14] &(6, 5, (2, 2, 2, 3,3)) &(1,3)& {\rm diag} [\xi]\oplus{\rm diag}[\xi,-\xi, \xi] \\ \hline
\end{array}\]
\end{corollary}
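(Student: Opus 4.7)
The plan is to mimic the proof strategy of Corollary~\ref{coro10} for each of the three families $M[5]$, $M[9]$, $M[14]$ with $m=6$, adapting the distinguished-point construction to accommodate components that may be disconnected (induced) $\mu_6$-covers. First I would invoke \cite[Remark 5.2, Lemma 6.4]{LMPT2} to exhibit an admissible degeneration of compact type of the universal $\mu_6$-cover; the fiber at the resulting boundary point $P\in Z_\gamma$ is an admissible cover $\psi_P\colon C_P\to T$, where $T$ is a tree of $r'+r=N-2$ projective lines and the restriction over each component of $T$ is a three-pointed $\mu_6$-cover. Concretely, $M[5]$ degenerates to inertia types $(2,3,1)+(3,4,5)$; $M[9]$ to $(1,3,2)+(4,4,4)$; and $M[14]$ to $(2,2,2)+(4,3,5)+(2,3,1)$ arranged along a suitable tree.

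For each three-pointed component I would classify the Jacobian via \cite[Lemma 3.1]{LMPT1}. If the inertia type generates all of $\mu_6$ (that is, no $d\mid 6$ with $d>1$ divides every $a_i$), the component is connected of genus one with complex multiplication by $\ZZ[\zeta_6]=\ZZ[\zeta_3]$ under the full $\mu_6$-action. If every $a_i$ is divisible by~$2$, as for $(4,4,4)$ in $M[9]$ and $(2,2,2)$ in $M[14]$, the component is the induced cover $\mathrm{Ind}_3^6(C_0)$ of a three-pointed $\mu_3$-cover $C_0$; following the decomposition argument used for $\tilde C_2$ in the proof of Corollary~\ref{coro10}, its Jacobian is $E\times E$ (with $E=\mathrm{Jac}(C_0)$) and the $\ZZ[\mu_6]$-action is the diagonal action $\ZZ[\zeta_3]\times\ZZ[\zeta_6]\hookrightarrow \mathrm{End}(E\times E)$, so one factor contributes to the $\mathcal{O}_{F'}$-part of the lattice (pure $\mu_3$-action) and the other to the $\mathcal{O}_F$-part (full $\mu_6$-action). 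This explains the shape of $(r',r)$ in each case.

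Next I would compute the signature $\cf$ of each connected component via \eqref{DMeqn} and extract the CM-type $\Phi$ from \eqref{EdefCMtype}; for the induced components the signature comes instead from the underlying $\mu_3$-cover, and the eigenvalues of $\zeta_6$ on the antisymmetric summand of $E\times E$ give the CM-type on the $\mu_6$-factor. Because $\QQ(\zeta_6)=\QQ(\zeta_3)$ contains no proper CM subfield, every such $\Phi$ is automatically simple by Lemma~\ref{LFermatsimple}. Using Example~\ref{Euandb3} together with the matching in Notation~\ref{beta610} and Example~\ref{Ebeta610}, and using Corollary~\ref{betauniquecyclo}, I would identify for each component the element $\beta_j$ satisfying conditions (1)--(3) of Corollary~\ref{beta} and set $\xi_j=\beta_j^{-1}\in\{\xi,-\xi\}$; these become the diagonal entries of the claimed $B$.

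Finally I would conclude exactly as in Theorem~\ref{PELdatumbeta}. Since $F=\QQ(\zeta_6)$ has class number one and $F_0=\QQ$ has narrow class number one, Corollary~\ref{CMuniqueCyclo}(1) gives a unique principally polarized CM-abelian variety for each simple type $(\mathcal{O}_F,\Phi_j)$ and $(\mathcal{O}_{F'},\Phi'_j)$; the product polarization on $\mathrm{Jac}(C_P)$ then yields the Hermitian form $H_B$ on $\Lambda=\mathcal{O}_{F'}^{r'}\oplus\mathcal{O}_F^{r}$, and this datum determines the integral PEL datum of $S_\gamma$ because it may be read off at any distinguished point of $Z_\gamma$ by \cite[Sections 2.8 and 2.9, also 3.2]{moonenLinearity}. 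The main obstacle is the bookkeeping for the induced-cover contributions: correctly splitting the Jacobian of an induced component into its $\mu_3$-isotypic and full-$\mu_6$ summands, assigning each to the correct block of the lattice, and tracking the sign $\pm\xi$ produced by each CM-type, all while reconciling the ring identification $\ZZ[\zeta_6]=\ZZ[\zeta_3]$ with the different $\mu_m$-action packaged by Notation~\ref{beta610}. This is where errors are easiest but the verification is a direct computation patterned on the proof of Corollary~\ref{coro10}.
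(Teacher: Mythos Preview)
Your proposal is correct and follows the same approach as the paper, with the same degenerations (up to reordering of entries) and the same treatment of the induced components $\mathrm{Ind}_3^6$ splitting into a $\ZZ[\zeta_3]$-factor and a $\ZZ[\zeta_6]$-factor. One caution: the general criterion you state---``the inertia type generates all of $\mu_6$''---does not by itself force genus one with CM by $\ZZ[\zeta_6]$ alone (for instance $(1,1,4)$ has genus~$2$ and CM by a non-maximal order in $\QQ(\zeta_3)\times\QQ(\zeta_6)$, cf.\ Remark~\ref{M13}); what you actually need, and what \cite[Lemma~3.1]{LMPT1} gives, is that $d=6$ is the \emph{unique} divisor $1<d\mid 6$ with $d\nmid a_i$ for all $i$, which does hold for the specific types $(1,2,3)$ and $(3,4,5)$ arising here.
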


\begin{proof}
The proof is very similar to that of Corollary~\ref{coro10} so we provide only a short 
sketch.

\begin{enumerate}
\item Let $M=M[5]$ with $\gamma=(6, 4, (2,3,3,4))$ and $\cf=(1,0,0,0,1)$. 
Then $C_P$ is the join of two $\mu_6$-covers
with inertia types $a_1 = (1,2,3)$ and $a_2 = ( 3, 4, 5)$.
These have signatures $\cf_1=(1,0,0,0,0)$ and $\cf_2=(0,0,0,0,1)$.
 By \cite[Lemma 3.1]{LMPT1}, $A_1$ has CM by $\mathcal{O}_{F}$ of type $\Phi_1=\{1\}$, 
and $A_2$ has CM by $\mathcal{O}_{F}$ of type $\Phi_2=\{5\}$.  
Let $b_1=-b_2 = -\sqrt{-3}$.
For $i=1,2$, the CM-type $\Phi_i$ is simple, and the element $b_i \in \mathcal{O}_{F}$ satisfies Corollary~\ref{betauniquecyclo} with respect to $\Phi_i$.

\item Let $M=M[9]$ with $\gamma=(6, 4, (1,3,4,4))$ and $\cf=(1,1,0,0,1)$. 
Then $C_P$ is the join of two covers
with inertia types $a_1 = (1,2,3)$ and $a_2={\rm Ind}^6_3(2, 2, 2)$.
These have signatures $\cf_1=(1,0,0,0,0)$ and $\cf_2=(0,1,0,0,1)$.
By \cite[Lemma 3.1]{LMPT1}, $A_1={\rm Jac}(C_1)$ has CM-type $(\mathcal{O}_{F}, \{1\})$, which has $\beta= b_1 = -b_2$.
Also ${\mathcal A}_2 \simeq A^2$ and the action of $\ZZ[\mu_{6}]$ on $\mathcal{A}_2$ is given by the diagonal action of $\ZZ[\zeta_3]\times \ZZ[\zeta_{6}]$ on $A^2$.  
The first copy of $A$ has CM-type $(\ZZ[\zeta_3], \{2\})$ which has $\beta=b_2$
and the second copy of $A$ has CM-type $(\ZZ[\zeta_6], \{5\})$ which has $\beta = b_2$.
  
\item Let $M=M[14]$ with $\gamma=(6, 5, (2, 2, 2, 3,3))$ and $\cf=(2,0,0,1,1)$. 
Then $C_P$ is the join of three covers
with inertia types $a_1 = (1,2,3)$,  $a_2 = (3, 4, 5)$ and $a_3={\rm Ind}^6_3(1, 1, 1)$.
These have signatures $\cf_1=(1,0,0,0,0)$, $\cf_2=(0,0,0,0,1)$, and $\cf_3=(1,0,0,1,0)$.
By \cite[Lemma 3.1]{LMPT1}, $A_1$ has CM-type $(\mathcal{O}_{F},\{1\})$ 
which has $\beta=b_1$ and $A_2$ has 
CM-type $(\mathcal{O}_{F}, \{5\})$ which has $\beta=b_2$. 
Also ${\mathcal A}_3 \simeq A^2$ and the action of $\ZZ[\mu_{6}]$ on $\mathcal{A}_3$ is given by the diagonal action of $\ZZ[\zeta_3]\times \ZZ[\zeta_{6}]$ on $A^2$.  
The first copy of $A$ has CM-type $(\ZZ[\zeta_3], \{1\})$ which has $\beta=b_1$
and the second copy of $A$ has CM-type $(\ZZ[\zeta_6], \{4\})$ which has $\beta = b_1$.
\end{enumerate}
\end{proof}

We checked Corollaries~\ref{coro10} and \ref{coro610} independently, using the fact that the Moonen special families for $m=6,10$ are subspaces of those for $m'=3,5$, up to a Galois twist.  

\begin{remark}\label{M13}
 Let $M=M[13]$ be the Moonen special family with $\gamma =(6, 4, (1,1,2,2))$ and $\cf=(2,1,0,1,0)$. 
This has an admissible degeneration to the join of two covers
with inertia types $a_1 = (1,1,4)$ and $a_2={\rm Ind}^6_3(1,1,1)$.
The first one has signature $\cf_1=(1,1,0,0,0)$ and its Jacobian $A_1$ has CM by an order of 
finite index in $\mathcal{O}_{F'}\times \mathcal{O}_{F}$.  
The index is non-trivial since $\langle x^3-1, x^3+1\rangle = \langle 2 \rangle$ in $\ZZ[x]/\langle x^6-1 \rangle$, so
our technique does not apply.
The other degeneration to $a_1 =(1,2,3)$ and $a_2=(3,1,2)$ does not have compact type.
\end{remark}

\bibliographystyle{amsplain}
\bibliography{infbib}

\providecommand{\bysame}{\leavevmode\hbox to3em{\hrulefill}\thinspace}
\providecommand{\MR}{\relax\ifhmode\unskip\space\fi MR }
\providecommand{\MRhref}[2]{%
  \href{http://www.ams.org/mathscinet-getitem?mr=#1}{#2}
}
\providecommand{\href}[2]{#2}
\begin{thebibliography}{10}

\bibitem{BLR}
Siegfried Bosch, Werner L{\"u}tkebohmert, and Michel Raynaud, \emph{N\'eron
  models}, Ergebnisse der Mathematik und ihrer Grenzgebiete (3) [Results in
  Mathematics and Related Areas (3)], vol.~21, Springer-Verlag, Berlin, 1990.
  \MR{1045822 (91i:14034)}

\bibitem{bookCH}
P.~E. Conner and J.~Hurrelbrink, \emph{Class number parity}, Series in Pure
  Mathematics, vol.~8, World Scientific Publishing Co., Singapore, 1988.
  \MR{963648}

\bibitem{conraddifferent}
Keith Conrad, \emph{The different ideal},
  https://kconrad.math.uconn.edu/blurbs/gradnumthy/different.pdf.

\bibitem{deligne-mostow}
P.~Deligne and G.~D. Mostow, \emph{Monodromy of hypergeometric functions and
  nonlattice integral monodromy}, Inst. Hautes \'Etudes Sci. Publ. Math.
  (1986), no.~63, 5--89. \MR{849651}

\bibitem{deligne79}
Pierre Deligne, \emph{Vari\'{e}t\'{e}s de {S}himura: interpr\'{e}tation
  modulaire, et techniques de construction de mod\`eles canoniques},
  Automorphic forms, representations and {$L$}-functions ({P}roc. {S}ympos.
  {P}ure {M}ath., {O}regon {S}tate {U}niv., {C}orvallis, {O}re., 1977), {P}art
  2, Proc. Sympos. Pure Math., XXXIII, Amer. Math. Soc., Providence, R.I.,
  1979, pp.~247--289. \MR{546620}

\bibitem{KRinv}
Neal Koblitz and David Rohrlich, \emph{Simple factors in the {J}acobian of a
  {F}ermat curve}, Canadian J. Math. \textbf{30} (1978), no.~6, 1183--1205.
  \MR{511556}

\bibitem{kottwitz92}
Robert~E. Kottwitz, \emph{Points on some {S}himura varieties over finite
  fields}, J. Amer. Math. Soc. \textbf{5} (1992), no.~2, 373--444. \MR{1124982}

\bibitem{lan}
Kai-Wen Lan, \emph{Arithmetic compactifications of {PEL}-type {S}himura
  varieties}, London Mathematical Society Monographs Series, vol.~36, Princeton
  University Press, Princeton, NJ, 2013. \MR{3186092}

\bibitem{LangCM}
Serge Lang, \emph{Complex multiplication}, Grundlehren der Mathematischen
  Wissenschaften [Fundamental Principles of Mathematical Sciences], vol. 255,
  Springer-Verlag, New York, 1983. \MR{713612}

\bibitem{LMPT3}
Wanlin Li, Elena Mantovan, Rachel Pries, and Yunqing Tang, \emph{Newton polygon
  stratification of the {T}orelli locus in {PEL}-type {S}himura varieties}, to
  appear in International Math Research Notices, available on arXiv:1811.00604.

\bibitem{LMPT1}
\bysame, \emph{Newton polygons of cyclic covers of the projective line branched
  at three points}, Research Directions in Number Theory: Women in Numbers IV,
  AWM series, pages 115-132, available on arXiv:1805.04598.

\bibitem{LMPT2}
\bysame, \emph{Newton polygons arising from special families of cyclic covers
  of the projective line}, Res. Number Theory \textbf{5} (2019), no.~1, Art.
  12, 31. \MR{3897613}

\bibitem{milneShimura}
J.~S. Milne, \emph{Shimura varieties and moduli}, Handbook of moduli. {V}ol.
  {II}, Adv. Lect. Math. (ALM), vol.~25, Int. Press, Somerville, MA, 2013,
  pp.~467--548. \MR{3184183}

\bibitem{moonenLinearity}
Ben Moonen, \emph{Linearity properties of {S}himura varieties. {I}}, J.
  Algebraic Geom. \textbf{7} (1998), no.~3, 539--567. \MR{1618140}

\bibitem{moonen}
\bysame, \emph{Special subvarieties arising from families of cyclic covers of
  the projective line}, Doc. Math. \textbf{15} (2010), 793--819. \MR{2735989}

\bibitem{shimuraanalytic}
Goro Shimura, \emph{On analytic families of polarized abelian varieties and
  automorphic functions}, Ann. of Math. (2) \textbf{78} (1963), 149--192.
  \MR{156001}

\bibitem{shimuraunitary}
\bysame, \emph{Arithmetic of unitary groups}, Ann. of Math. (2) \textbf{79}
  (1964), 369--409. \MR{158882}

\bibitem{shimuratranscend}
\bysame, \emph{On purely transcendental fields automorphic functions of several
  variable}, Osaka Math. J. \textbf{1} (1964), no.~1, 1--14. \MR{176113}

\bibitem{vanwamelen}
Paul van Wamelen, \emph{Examples of genus two {CM} curves defined over the
  rationals}, Math. Comp. \textbf{68} (1999), no.~225, 307--320. \MR{1609658}

\bibitem{washington}
Lawrence~C. Washington, \emph{Introduction to cyclotomic fields}, second ed.,
  Graduate Texts in Mathematics, vol.~83, Springer-Verlag, New York, 1997.
  \MR{1421575}

\end{thebibliography}

\end{document}